\DeclareMathOperator{\spn}{span}
\DeclareMathOperator{\Hom}{Hom}
\numberwithin{equation}{section} 
\newcommand{\inv}{^{-1}}
\newcommand{\ol}{\overline}
\newcommand{\la}{\mathfrak}
\newcommand \C[1]{{\mathcal #1}}
\newcommand \Tn{\mathscr{T}_n}
\newcommand \T[1]{T_{s_{#1}}}
\newcommand \dT[1]{\dot{T}_{s_{#1}}}
\newcommand \kl{Kazhdan--Lusztig~}
\newcommand{\fyt}{f^\mathrm{YT}}
\newcommand{\fkl}{f^{\operatorname{KL}_\CC}}
\newcommand{\fw}{f^\mathrm{web}}
\newcommand{\dyt}{D^\mathrm{YT}}
\newcommand{\dkl}{D^{\operatorname{KL}_\CC}}
\newcommand{\dw}{D^\mathrm{web}}
\newcommand{\dsn}{D^{S_n}}
\newcommand{\fsn}{f^{S_n}}
\newcommand{\ten}{10}
\newcommand{\eleven}{11}
\newcommand{\twelve}{12}
\newcommand{\klc}{\operatorname{KL}_{\CC}}
\newcommand\bZ{\mathbb{Z}}
\newcommand\Hn{\mathscr{H}_n}
\newcommand\tab{Y}
\newcommand\CC{\mathcal C}
\newcommand\bC{\mathbb C}
\newcommand\dk[1]{\overset{#1}{\underset{dK}{\approx}}}
\newcommand\knuth[1]{\overset{#1}{\underset{K}{\approx}}}
\newtheorem{theorem}{Theorem}[section]
\newtheorem{question}[theorem]{Open Question}
\newtheorem{lemma}[theorem]{Lemma}
\newtheorem{corollary}[theorem]{Corollary}
\newtheorem{statement}[theorem]{Statement}
\theoremstyle{definition}
\newtheorem{definition}[theorem]{Definition}
\newtheorem{example}[theorem]{Example}
\newtheorem{observation}[theorem]{Observation}
\newtheorem{remark}[theorem]{Remark}
\title{The Robinson--Schensted Correspondence and $A_2$-web Bases}
\author{Matthew Housley}
\address{Department of Mathematics, University of Utah, 155 S 1400 E Room 233, Salt Lake City, UT 84112 U.S.A.}
\email{housley@math.utah.edu}
\author{Heather M. Russell}
\address{Department of Mathematics and Computer Science, Washington College, 300 Washington Avenue, Chestertown, MD 21620 U.S.A.}
\email{hrussell2@washcoll.edu}
\author{Julianna Tymoczko}
\address{Department of Mathematics and Statistics, Smith College, Northampton, MA 01063 U.S.A.}
\email{jtymoczko@smith.edu}
\thanks{MH was supported by the National Science Foundation. HR was supported by the John Templeton Foundation.  JT was supported by a Sloan Fellowship and by National Science Foundation grants DMS-0801554 and DMS-1248171.}
\begin{document}

\begin{abstract}
We study natural bases for two constructions of the irreducible representation of the symmetric group corresponding to $[n,n,n]$: the {\em reduced web} basis associated to Kuperberg's combinatorial description of the spider category; and the {\em left cell basis} for the left cell construction of Kazhdan and Lusztig.  In the case of $[n,n]$, the spider category is the Temperley-Lieb category; reduced webs correspond to planar matchings, which are equivalent to left cell bases.  This paper compares the image of these bases under classical maps: the {\em Robinson--Schensted algorithm} between permutations and Young tableaux and {\em Khovanov--Kuperberg's bijection} between Young tableaux and reduced webs.  

One main result uses Vogan's generalized $\tau$-invariant to uncover a close structural relationship between the web basis and the left cell basis.    Intuitively, generalized $\tau$-invariants refine the data of the inversion set of a permutation.  We  define generalized $\tau$-invariants intrinsically for \kl left cell basis elements and for webs. We then show that the generalized $\tau$-invariant is preserved by these classical maps.  Thus, our result allows one to interpret Khovanov--Kuperberg's bijection as an analogue of the Robinson--Schensted correspondence.

Despite all of this, our second main result proves that the reduced web and left cell bases are inequivalent; that is, these bijections are not $S_{3n}$-equivariant maps. 
\end{abstract}

\maketitle

\section{Introduction}

This paper studies two important bases for the $[n,n,n]$ representation of the symmetric group $S_{3n}$---the {\em \kl basis} and the {\em reduced web basis}---to compare their images under  classical maps.   

Kuperberg defined the combinatorial $A_2$ spider \cite{K}, which comes with a braiding that induces a symmetric group action on webs.  The combinatorial $A_2$ spider is a diagrammatic category encoding the representation theory of $\C U_q(\la{sl}_3)$, namely the quantum enveloping algebra of the Lie algebra $\la{sl}(3,\bC)$.  The category comes with objects (tensor products of the standard representation $V^+$ and the dual representation $V^-$ of the quantum group), morphisms  (intertwining maps), and a diagrammatic description of morphisms compatible with the standard skein relations of knot theory.  The analogous object for $\la{sl}(2,\bC)$ is the $A_1$ spider or Temperley-Lieb category.  (Spiders for  $\C U_q(\la{sl}_n)$ have also been constructed \cite{Kim,MR2710589}.)  In the classical limit, the symmetric group acts on a tensor power by exchanging factors; this action is represented diagrammatically by twining certain strands of the web and resolving according to standard knot-theoretic relations.  In this paper, as in much other combinatorial work  \cite{PPR, T}, we focus on webs for $(V^+)^{\otimes3n}$.

The action of the symmetric group on the \kl basis comes from a totally different context: the construction of the Hecke algebra and action of the Hecke algebra on itself, which specializes to an action of the symmetric group on its group algebra in the classical limit.  Kazhdan--Lusztig's original work on the Hecke algebra  aimed to understand Springer's geometric representation of the symmetric group.  They described a basis they hoped was Springer's basis, and in the process discovered a different, more algebraically-natural basis of the Hecke algebra now called the \kl basis. The \kl basis yields \emph{left cell bases} for each irreducible representation of the symmetric group under a standard process described in Section \ref{klsection}.  A close relationship seems to exist between these \kl left cell bases and the Springer basis of each irreducible symmetric group representation (see below), though its exact nature is mysterious.

These bases of irreducible symmetric group representations tie together numerous areas of mathematics. The \kl left cell bases have important applications to problems in combinatorics \cite{MR2557880}, geometry \cite{MR1782635}, and the theory of infinite dimensional Lie algebra representations \cite{MR632980,MR610137}. The reduced web basis plays an important role in both category and knot theory \cite{K}. Our work is part of a wider effort to understand these bases and their various applications.

One traditional approach to studying bases is to identify equivalences by constructing isomorphisms.  For instance, Khovanov constructed isomorphisms sending Deodhar's relative \kl basis to two different bases:  Lusztig's canonical basis and dual canonical basis in the context of $\C U_q(\la{sl}_2)$ \cite{MR1446615}.  Khovanov defined his map algebraically, though he was inspired by  diagrammatic calculations in the Temperley-Lieb algebra.  Frenkel, Khovanov, and Kirillov extended Khovanov's result to an isomorphism mapping the relative \kl basis to Lusztig's standard and dual canonical bases for all $\C U_q(\la{sl}_k)$ \cite{MR1657524}. 

Another traditional approach is to compare the images of different bases under established maps.  For instance, the natural restriction of the dual canonical basis to the $A_1$ spider (discussed later in this introduction) coincides with the reduced web basis for $\C U_q(\la{sl}_2)$ \cite{MR1446615}.  Kuperberg conjectured that in the $\la {sl}_3$ case, the restriction of the dual canonical basis would agree with the reduced web basis \cite{K}, though this was later disproven by Khovanov and Kuperberg \cite{KK}.  More recently, Fontaine, Kamnitzer, and Kuperberg identify reduced webs for $A_2$ with the components of the $[n,n,n]$ Springer fiber using a map closely related to the geometric Satake correspondence \cite{FKK,FON}.

In this paper, we consider well-known combinatorial bijections relating {the index sets for} different bases of $S_{3n}$--representations:
\begin{enumerate}
\item[I.] permutations naturally index the \kl basis elements;
\item[II.] the {\em Robinson--Schensted} algorithm associates permutations bijectively to pairs of standard Young tableaux; and

\item[III.]  {\em Khovanov--Kuperberg's bijection} identifies reduced webs for $(V^+)^{\otimes3n}$ with standard Young tableaux of shape $[n,n,n]$.

\end{enumerate}

Robinson--Schensted's algorithm needs little introduction.  Decomposing the regular representation of the symmetric group $S_n$ into a sum of irreducible representations (with multiplicity) shows that the order of $S_n$ is equal to the number of same shape pairs of Young tableaux on $n$ boxes.  Robinson--Schensted makes this bijection explicit; it is used throughout combinatorics, representation theory, and computer science \cite{MR1464693,MR2133266,MR1824028}.  

Similarly,  Khovanov--Kuperberg's bijection \cite[Proposition 1]{KK} makes explicit Kuperberg's more general observation that reduced webs are equinumerous with certain dominant lattice paths \cite{K};  we use Petersen--Pylyavskyy--Rhoades's elegant description of the map, which replaces dominant lattice paths with Young tableaux, together with Tymoczko's description of the inverse  \cite{T}.  Russell recently extended the map to a bijection in the context of arbitrary tensor products of $V^+$ and $V^-$ and semistandard Young tableaux \cite{MR3119361}.  

The current paper proves two main claims in a series of results.  The first is that the three bijections I--III preserve deep structural properties, in a sense we make precise in a moment.  The second is a combinatorial proof that the reduced web and \kl left cell bases of $[n,n,n]$ symmetric group representations are inequivalent in spite of their structural commonalities. 

The structure that these bijections preserve is called the {\em generalized $\tau$-invariant}.  Vogan first defined generalized $\tau$-invariants in the context of $\C U(\la g)$-modules 
\cite{MR545215}.  For permutations, the generalized $\tau$-invariant can be thought of intuitively as a refinement of the data of the descent set.  More precisely, given information about how to construct a descent set together with a collection of functions $f_{i,j}$ that change the descent set in prescribed ways, we can follow different sequences of functions $f_{i,j}$ while retaining the information of the descents sets associated to each permutation encountered along the way. The generalized $\tau$-invariant is closely related to Assaf's notion of dual equivalent graphs, wherein vertices are labeled with descent sets and an edge connects two vertices when they are linked by an $f_{i,j}$-map \cite{MR2710706}.

We provide intrinsic definitions of the descent set in different contexts, which we call the {\em $\tau$-invariant} to emphasize our purpose.  The $\tau$-invariant is the collection of simple reflections $s_i$ for which: 
\begin{itemize}
\item {\bf (for a standard Young tableau)}  the number $i+1$ lies on a lower row than $i$ (and a special case of Vogan's definition \cite{MR545215});
\item {\bf (for a \kl basis element)} the reflection $s_i$ negates the \kl basis element; and
\item {\bf (for a reduced web)} boundary vertices $i$ and $i+1$ are joined by an interior vertex.
\end{itemize}
We then identify natural functions $f_{i,j}$ in each context, allowing us to recursively construct the generalized $\tau$-invariant in each case.  Amazingly, the $\tau$-invariant, the maps $f_{i,j}$, and thus the generalized $\tau$-invariants commute with the natural maps described above and as shown in Figure \ref{figure: commuting gen taus}.  

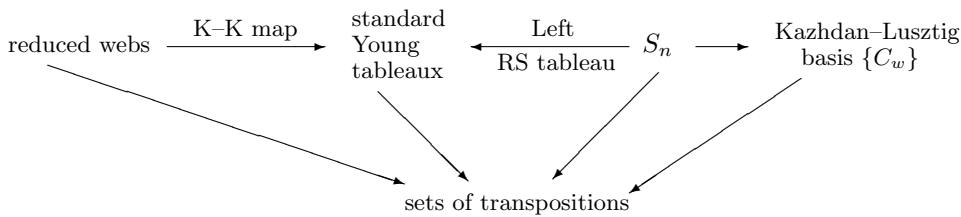
\begin{figure}[h]
\begin{picture}(370,80)(-20,-20)
\put(-20,40){\small reduced webs}
\put(110,50){\small standard}
\put(110,40){\small Young} 
\put(110,30){\small tableaux}
\put(220,40){$S_n$}
\put(270,45){\small \kl}
\put(280,35){\small basis $\{C_w\}$}

\put(40,42){\vector(1,0){60}}
\put(215,42){\vector(-1,0){60}}
\put(240,42){\vector(1,0){20}}

\put(178,46){\small Left}
\put(165,33){\small RS tableau}
\put(50,46){\small K--K map}

\put(130,-20){\small sets of transpositions}

\put(280,30){\vector(-3,-2){65}}
\put(226,32){\vector(-1,-1){40}}
\put(120,25){\vector(1,-1){34}}
\put(-5,35){\vector(3,-1){135}}
\end{picture}
\caption{Commuting diagram of various generalized $\tau$-invariants (diagonal arrows represent generalized $\tau$-invariants)}\label{figure: commuting gen taus}
\end{figure}

Vogan proved that the central triangle in  Figure \ref{figure: commuting gen taus} commutes \cite[Proposition 6.4]{MR523602}; we give an elementary and self-contained proof in Theorem \ref{theorem: generalized tau invariants for perms and RS}. Vogan's result is a direct and non-algorithmic characterization of the left tableau determined by the 
Robinson--Schensted correspondence.  Theorem \ref{klexchange} proves that the rightmost triangle in  Figure \ref{figure: commuting gen taus} commutes; in conjuction with the mapping from $S_n$ to left tableaux, this gives us an intrinsic version of Robinson--Schensted for \kl left cell basis elements.  This refines results of Bj{\"o}rner --- Theorems \ref{RSleftcell} and \ref{leftcellunique} --- parameterizing left cell bases by Young tableaux.

Our first main result, Lemma \ref{lemma: kupbij commutes with f}, treats the leftmost triangle in Figure \ref{figure: commuting gen taus}: it proves that Khovanov--Kuperberg's bijection commutes with the generalized $\tau$-invariant.  This means that Khovanov--Kuperberg's bijection, too, is a direct analogue of the Robinson--Schensted correspondence, as stated in Theorem \ref{maintheorem}.  Interestingly, these are not the only situations in which a generalized $\tau$-invariant arises: various results on orbital varieties and Young tableaux imply that the Springer basis for $\mathfrak{sl}_n$ also satisfies a generalized $\tau$-invariant property \cite{MR0447423,MR741942,MR733619,McGovern}.

Our second main result, Theorem \ref{thm:Inequivalence of bases}, shows that the reduced web and \kl left cell bases are not equivalent.  In other words, the linear map induced by sending each reduced web to its corresponding left cell basis element is not $S_{3n}$-equivariant. The fact that these two bases disagree is compatible with other results relating dual canonical, Kazhdan--Lusztig, and web bases. Frenkel, Khovanov, and Kuperberg showed that the dual canonical basis is equivalent to Deodhar's relative \kl basis \cite{MR1657524,MR916182} and  Khovanov and Kuperberg subsequently proved that the web and dual canonical bases are inequivalent. How Deodhar's  basis relates to the \kl left cell bases remains unknown, so our result is a suggestive complement to Khovanov and Kuperberg's work.  To the best of our knowledge this result appears nowhere in the literature, though some experts believe that it may also follow from Schur--Weyl duality.  Thus, Theorem  \ref{thm:Inequivalence of bases} confirms a piece of mathematical ``folklore."

There is a rich history of change-of-basis matrices that are upper-triangular with ones along the diagonal, together with the important polynomials that arise as entries in these matrices, including:  Kostka polynomials in the theory of symmetric functions \cite[Chapter 1.6]{MR1354144}, the \kl polynomials themselves (which essentially provide the change-of-basis between the \kl basis and the defining basis for the Hecke algebra), and the change-of-basis matrix between the Springer basis and the web basis \cite{FKK}. This leads us to the following open question.

\begin{question}
We conjecture that the change-of-basis matrix between the \kl basis and the basis of reduced webs is upper-triangular with ones along the diagonal.  What are the entries in the change-of-basis matrix between the \kl basis and the basis of reduced webs?  How do they relate to other combinatorial objects?
\end{question}

\section{\texorpdfstring{Generalized $\tau$-invariants for Tableaux}{Generalized tau-invariants for Tableaux}}\label{tableausection}

We begin by describing $\tau$-invariants and generalized $\tau$-invariants for tableaux.  In this context, the $\tau$-invariant of a standard tableau $Y$ is the set of pairs $i,i+1$ for which $i+1$ is in a row below $i$.  We will define processes that sequentially exchange certain pairs $i,i+1$ or $i+1,i+2$ in a tableau $Y$; the generalized $\tau$-invariant of $Y$ collects the $\tau$-invariants of the tableaux that result from all sequences of transpositions allowed in our processes.  Theorem \ref{gentautabthm}, the main theorem of this section, proves that if two tableaux with the same number of boxes have the same generalized $\tau$-invariant, then in fact they are the same---and in particular the tableaux have the same shape.

In this and subsequent sections, we will rely extensively on standard results from the combinatorics of tableaux and the symmetric group. Bj{\"o}rner and Brenti's book provides an excellent exposition of this material \cite{MR2133266}.  We quickly establish our conventions.

A \emph{Young diagram} is a collection of finitely many boxes arranged in top- and left-justified columns and rows. Young diagrams with $n$ boxes correspond naturally to partitions of $n$ by treating the length of each row as a part of a partition. A \emph{standard Young tableau} on a Young diagram with $n$ boxes is a labeling of the boxes with the numbers $1,2,\ldots, n$ in such a way that the labels increase strictly left-to-right and top-to-bottom.

We denote the set of all standard Young tableaux on $n$ boxes by $\Tn$. Let $s_i\in S_n$ be the simple transposition that exchanges $i$ and $i+1$.
\begin{definition}
Let $\tab\in \Tn$.  The \emph{$\tau$-invariant} of $\tab$ is the subset of simple transpositions $s_i$ in $S_n$ for which $i+1$ is below the row of $i$ in $\tab$. We denote the $\tau$-invariant of $\tab$ by $\tau(\tab)$. 
\end{definition}

Notions like that of the $\tau$-invariant are common when studying Young tableaux: in many of the standard ways to associate a permutation to a tableau, finding a larger number in a row below a smaller number corresponds to an inversion of the permutation.  Figures \ref{exampletableaux1} and \ref{exampletableaux2} show a number of tableaux with their $\tau$-invariants.

\begin{definition}
If $s_i,s_j$ are adjacent simple transpositions in $S_n$,   define $\dyt_{i,j}$ to be the set of all $\tab\in \Tn$ such that $s_i\in \tau(\tab)$ and $s_j\notin\tau(\tab)$. 
\end{definition}

Let $s_i \cdot \tab$ be the (not necessarily standard) tableau obtained from $\tab$ by exchanging $i$ and $i+1$. The proof of following fact is straightforward.

\begin{lemma}\label{fytlemma}
Let $s_i,s_j$ be adjacent simple transpositions in $S_n$. 
\begin{enumerate}
\item Given $\tab\in \dyt_{i,j}$ exactly one of $s_i\cdot \tab$ and $s_j\cdot \tab$ is a standard tableau in $\dyt_{j,i}$.  Denote this unique element %$\{s_i \cdot \tab, s_j \cdot \tab\} \cap \dyt_{j,i}$ 
by $\fyt_{i,j}(\tab)$.
\item The function $\fyt_{i,j}:\dyt_{i,j}\rightarrow \dyt_{j,i}$ is a bijection whose inverse is the function $\fyt_{j,i}:\dyt_{j,i}\rightarrow \dyt_{i,j}$.
 \end{enumerate}
\end{lemma}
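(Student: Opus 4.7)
The plan is to prove part (1) by a short case analysis on the rows of $\{i,i+1,i+2\}$ (or $\{i-1,i,i+1\}$ when $j=i-1$) in $\tab$, and then to deduce part (2) formally from the uniqueness in part (1). The one elementary input used throughout is that for any $k$, the tableau $s_k\cdot\tab$ is standard if and only if the boxes of $k$ and $k+1$ in $\tab$ are not horizontally or vertically adjacent; this is immediate by inspecting the four local comparisons affected by the swap.

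Assume $j=i+1$ (the case $j=i-1$ is handled analogously with $\{i-1,i,i+1\}$ in place of $\{i,i+1,i+2\}$). Let $r_1,r_2,r_3$ and $c_1,c_2,c_3$ be the rows and columns of $i,i+1,i+2$ in $\tab$. The hypotheses $s_i\in\tau(\tab)$ and $s_{i+1}\notin\tau(\tab)$ read $r_1<r_2$ and $r_3\le r_2$, so I split into three sub-cases. Case (1), $r_3=r_2$: then $i+2$ sits directly right of $i+1$, so $s_{i+1}\cdot\tab$ is not standard; supposing $i+1$ directly below $i$ would force the cell $(r_1,c_1+1)$ to exist in the Young diagram and to carry the value $i+1$ (by row- and column-increase), contradicting the placement of $i+1$ at $(r_1+1,c_1)$. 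Hence $s_i\cdot\tab$ is standard, and reading off the new rows places it in $\dyt_{j,i}$. Case (2a), $r_1<r_3<r_2$: the swap $s_{i+1}\cdot\tab$ is standard but leaves $i+1$ in row $r_3>r_1$, so $s_i\in\tau(s_{i+1}\cdot\tab)$ and $s_{i+1}\cdot\tab\notin\dyt_{j,i}$; meanwhile $r_2\ge r_1+2$ prevents $i+1$ from lying directly below $i$, so $s_i\cdot\tab$ is standard, and a direct row-check places it in $\dyt_{j,i}$. Case (2b), $r_3\le r_1<r_2$: the swap $s_{i+1}\cdot\tab$ is standard and lies in $\dyt_{j,i}$, while in $s_i\cdot\tab$ the entry $i+1$ moves to row $r_1$ and $i+2$ remains in row $r_3\le r_1$, so $s_{i+1}\notin\tau(s_i\cdot\tab)$ and $s_i\cdot\tab\notin\dyt_{j,i}$ regardless of whether it is standard. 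These three sub-cases exhaust the possibilities and uniquely define $\fyt_{i,j}(\tab)$.

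Part (2) is formal. Write $\fyt_{i,j}(\tab)=s_k\cdot\tab$ for the unique $k\in\{i,j\}$ furnished by part (1) and set $\tab'=s_k\cdot\tab\in\dyt_{j,i}$. Part (1) applied to $\tab'$ produces a unique $k'\in\{i,j\}$ with $\fyt_{j,i}(\tab')=s_{k'}\cdot\tab'$; since $s_k$ is an involution, $s_k\cdot\tab'=\tab$ is itself standard and lies in $\dyt_{i,j}$, so uniqueness forces $k'=k$ and $\fyt_{j,i}\circ\fyt_{i,j}=\mathrm{id}$. The reverse composition follows by the same argument with the roles of $(i,j)$ and $(j,i)$ exchanged, so $\fyt_{i,j}$ and $\fyt_{j,i}$ are mutually inverse bijections.

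The main obstacle is purely organisational: in case (2a) both $s_i\cdot\tab$ and $s_{i+1}\cdot\tab$ happen to be standard tableaux, and only the $\tau$-invariant condition distinguishes them, so one must resist stopping at standardness and carry both the ``standard tableau'' and the ``membership in $\dyt_{j,i}$'' checks through every sub-case.
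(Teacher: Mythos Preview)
Your proof is correct. The paper does not actually prove this lemma---it simply states that ``the proof of the following fact is straightforward'' and moves on---so your case analysis on the rows of $\{i,i+1,i+2\}$ is precisely the kind of argument the authors are asking the reader to supply, and your deduction of part~(2) from the uniqueness in part~(1) is the right way to finish.
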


Figures \ref{exampletableaux1} and \ref{exampletableaux2} give examples of $\fyt_{i,j}$ for different tableaux.

We now define the generalized $\tau$-invariant, one of the central definitions of this paper,  which will return in slightly different contexts in the next two sections.  The generalized $\tau$-invariant is constructed as an equivalence class on the collection of Young tableaux.  Intuitively, the generalized $\tau$-invariant records the data of the descents of $\tab$, together with all of the Young tableaux obtained as a sequence $\fyt_{i_1,j_1} \circ \fyt_{i_2,j_2} \circ \fyt_{i_3,j_3} \circ \cdots (\tab)$ where each function $\fyt_{i_k,j_k}$ is assumed to be well-defined on its input.  (In fact, as the reader will see, the generalized $\tau$-invariant contains more information.)  More precisely, we have the following.

\begin{definition}\label{gentaudef}
Let $\tab$ and $\tab'$ be elements of $\Tn$.
 If $\tau(\tab)=\tau(\tab')$, then we say that $\tab$ and $\tab'$ are \emph{equivalent to order $0$}, written $\tab\underset {0}{\approx}\tab'$. 
 We say that $\tab$ and $\tab'$ are {\em equivalent to order $n$} and write $\tab\underset {n}{\approx}\tab'$ if $\tab\underset {n-1}{\approx}\tab'$ and $\fyt_{i,j}(\tab)\underset {n-1}{\approx}\fyt_{i,j}(\tab')$ whenever $\tab$ and $\tab'$ are both in $\dyt_{i,j}$. 
 If $\tab\underset {n}{\approx}\tab'$ for all nonnegative integers $n$, then we say $\tab$ and $\tab'$ have the same \emph{generalized $\tau$-invariant} and write $\tau_g(\tab)=\tau_g(\tab')$.
\end{definition}

The data collected in Figures \ref{exampletableaux1} and \ref{exampletableaux2} demonstrates parts of the calculation of the generalized $\tau$-invariant for different tableaux.

\begin{remark}
Note that in \cite{MR545215}, Vogan defines a \emph{right} generalized $\tau$-invariant by using the right action of the symmetric group on itself. Our version uses the \emph{left} action instead. The right generalized $\tau$-invariant is simply $\tau_g(x\inv)$.
\end{remark}

\begin{remark}\label{remark: general gentaudef}
We defined the generalized $\tau$-invariant for tableaux $\tab \in \Tn$.  However, the definition can be extended to the elements $\tab$ of any set $\Omega$ as long as the set $\Omega$ is equipped with a $\tau$-invariant $\tau: \Omega \rightarrow \{\textup{subsets of }S_n\}$, subsets $D_{i,j} \subseteq \Omega$ and a family of functions $f_{i,j}: D_{i,j} \rightarrow D_{j,i}$.  In general, we use the subsets $D_{i,j} \subseteq \Omega$ consisting of the elements $\tab \in \Omega$ with $s_i \in \tau(\tab)$ and $s_j \notin \tau(\tab)$.  We will use this remark repeatedly in subsequent sections, where we define generalized $\tau$-invariants for permutations, \kl basis elements, and webs.
\end{remark}

A standard tableau $\Gamma$ is called \emph{column superstandard} if each column is labeled sequentially, starting with $1,2,3,\ldots$ in the first column and increasing by one between the bottom of one column and the top of the next column to the right. (The last tableau in Example \ref{colsup} has this form.)   This is a useful form for a tableau; it turns out that a judiciously chosen sequence of maps $\fyt_{i,j}$ will transform an arbitrary tableau into a column superstandard one.

\begin{lemma}\label{tableautosuperstandardlemma}
Given a standard tableau $\tab$, there exists a sequence of maps \linebreak $\fyt_{i_1,j_1}, \fyt_{i_2,j_2}, \fyt_{i_3,j_3},\ldots$ that carries $\tab$ to a column superstandard tableau.
\end{lemma}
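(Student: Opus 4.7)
The plan is to induct on $n$, the number of boxes of $\tab$. The base case $n\le 1$ is trivial. For the inductive step, fix $\tab\in\Tn$ of shape $\lambda$; since each $\fyt_{i,j}$ preserves the shape of its input, any tableau reachable from $\tab$ still has shape $\lambda$, so we must aim at the unique column superstandard tableau $\tab^{CS}_\lambda$ of shape $\lambda$. Let $b$ denote the bottom box of the rightmost column of $\lambda$; this is always a removable corner of $\lambda$, and it is precisely the box in which $\tab^{CS}_\lambda$ records the entry $n$.

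I would split the argument into two stages. The first and harder stage is to apply a sequence of $\fyt$-moves to $\tab$ that relocates the entry $n$ to the corner $b$. Observe that any move $\fyt_{i,j}$ with $i,j\le n-3$ swaps entries drawn from $\{i,i+1,i+2\}\subseteq\{1,\dots,n-1\}$ and hence fixes the position of $n$; only the two moves $\fyt_{n-1,n-2}$ and $\fyt_{n-2,n-1}$ can displace $n$. Thus to slide $n$ one removable corner of $\lambda$ to the right along the rim, one would first apply a sequence of lower-index moves to maneuver $n-1$ and $n-2$ into positions where one of $\fyt_{n-1,n-2}$ or $\fyt_{n-2,n-1}$ is applicable and has the effect of moving $n$ in the desired direction. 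I would prove this corner-migration claim by secondary induction on the number of removable corners of $\lambda$ lying strictly to the left of the current corner occupied by $n$, via a local case analysis of the boxes separating two consecutive corners.

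Once $n$ has been placed at $b$, call the resulting tableau $\widetilde\tab$. Deleting $b$ produces a standard tableau $\widetilde\tab'$ on $n-1$ boxes of shape $\lambda\setminus\{b\}$. By the outer inductive hypothesis there is a sequence $\fyt_{i_1,j_1},\dots,\fyt_{i_m,j_m}$---with all indices at most $n-2$---carrying $\widetilde\tab'$ to the column superstandard tableau of shape $\lambda\setminus\{b\}$. I would finish by verifying that the same sequence, applied now to $\widetilde\tab$, produces $\tab^{CS}_\lambda$: the $\tau$-invariants of $\widetilde\tab$ and $\widetilde\tab'$ agree on $\{s_1,\dots,s_{n-2}\}$, so the applicability condition for each $\fyt_{i_k,j_k}$ is the same in both settings, and since every such move only exchanges entries in $\{1,\dots,n-1\}$, the box $b$ holding $n$ is never disturbed. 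The resulting tableau places $n$ at $b$ with the remaining entries in the column superstandard arrangement for $\lambda\setminus\{b\}$, which together yield $\tab^{CS}_\lambda$.

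The main obstacle is the corner-migration step in the first stage, which requires an explicit construction of the preparatory low-index moves needed to set up a successful application of $\fyt_{n-1,n-2}$ or $\fyt_{n-2,n-1}$; the second stage is then a routine lifting argument using the fact that $s_i\in\tau(\widetilde\tab)\iff s_i\in\tau(\widetilde\tab')$ for $i\le n-2$.
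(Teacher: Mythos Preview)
Your proposal leaves the decisive step unproved. The corner-migration you describe---sliding $n$ from its current removable corner to the rightmost corner $b$---is flagged as the ``main obstacle'' and then deferred to an unwritten ``local case analysis.'' To make it go through you would need, for each pair of adjacent removable corners, to specify where to place $n-1$ and $n-2$ and to verify that the subsequent application of $\fyt_{n-1,n-2}$ or $\fyt_{n-2,n-1}$ actually performs the swap $s_{n-1}$ (rather than $s_{n-2}$). You also implicitly require that low-index moves act transitively on the standard tableaux of shape $\lambda\setminus\{\text{corner of }n\}$, so that $n-1$ and $n-2$ can be positioned arbitrarily; this does follow from the inductive hypothesis together with the invertibility of the maps $\fyt_{i,j}$ (Lemma~\ref{fytlemma}), but you never invoke that.

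The paper's argument is both different and shorter: it peels off the first \emph{column} rather than the last box. One observes that if $k$ is the largest index with $s_k\notin\tau(Y)$ and $k<n-1$, then $Y\in \dyt_{k+1,k}$, and applying $\fyt_{k+1,k}$ strictly increases this largest index; iterating forces $s_{n-1}\notin\tau$, so $n$ lies outside column one. The inductive hypothesis is then used twice: first on the entries $1,\dots,n-1$ (with $n$ temporarily ignored) to put the first column in the order $1,2,\dots,c_1$, and then on the complement of the first column with labels shifted. This replaces your delicate corner-to-corner migration with a one-line monotonicity observation.
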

\begin{example}\label{colsup}

\[
\raisebox{-.3in}{\young(125,34,6)}\ \underrightarrow{\scriptstyle\ \fyt_{5,4}\ }\ \raisebox{-.3in}{\young(126,34,5)}\ \underrightarrow{\scriptstyle\ \fyt_{4,3}\ }\ \raisebox{-.3in}{\young(126,35,4)}\ \underrightarrow{\scriptstyle\ \fyt_{2,1}\ }\ \raisebox{-.3in}{\young(136,25,4)}\ \underrightarrow{\scriptstyle\ \fyt_{3,4}\ }\
\raisebox{-.3in}{\young(146,25,3)}
\]

\end{example}
\begin{proof}[Proof of Lemma \ref{tableautosuperstandardlemma}]
The lemma is trivially true if $\tab$ has only one box, or even only one column. We  proceed by induction on the number of boxes in $\tab$.

Suppose $\tab$ has $n$ boxes and more than one column. In this case, there is a simple transposition $s_i\in S_n$ that is not in $\tau(\tab)$. Let $k$ be the largest integer such that $s_k\notin\tau(\tab)$ and assume that $k<n-1$ so that $\tab\in \dyt_{k+1,k}$.  Then $s_{k+1}\notin \tau(\fyt_{k+1,k}(\tab))$. A series of operations $\fyt_{k+1,k}(\tab)$ produces a tableau $\tab'$ such that $s_{n-1}\notin\tau(\tab')$. It follows that $n$ is not in the first column of $\tab'$.

First ignore the label $n$ in $Y'$.  The induction assumption allows us to put the remaining labels in column superstandard order. In particular, the first column of the resulting tableau is numbered sequentially $1,2,3,\ldots$. Now include $n$ but ignore the first column of this tableau. Shifting labels, the induction assumption allows us to put the remaining labels in column superstandard order. The resulting tableau is column superstandard.
\end{proof}

The next sequence of results builds towards a result of Vogan's: that the generalized $\tau$-invariant really is a complete invariant of tableaux, in the sense that different tableaux have different generalized $\tau$-invariants.  The first two lemmas treat special cases, while the subsequent theorem puts them together to prove the general claim.

\begin{lemma}\label{Yequalsgamma}
Let $\Gamma$ be a column superstandard tableau and $\tab$ a standard tableau of the same shape. If $\tau(\tab)=\tau(\Gamma)$ then $\tab=\Gamma$.
\end{lemma}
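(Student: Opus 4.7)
The plan is a direct induction on the entries, showing that for each $i \in \{1, 2, \ldots, n\}$ entry $i$ in $Y$ occupies the same cell as entry $i$ in $\Gamma$; since both tableaux have the same shape $\lambda$, this yields $Y = \Gamma$. The base case $i = 1$ is trivial: every standard tableau places $1$ in cell $(1,1)$.

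Before running the induction, I would record the following description of $\tau(\Gamma)$: the simple transposition $s_i$ belongs to $\tau(\Gamma)$ if and only if the entries $i$ and $i+1$ of $\Gamma$ lie in the same column. This is immediate from the definition of column superstandard: consecutive entries within a single column are placed one directly below the other (giving $s_i \in \tau(\Gamma)$), whereas whenever $i$ ends a column and $i+1$ begins the next column to its right, $i+1$ sits in row $1$, which is at or above the row of $i$ (so $s_i \notin \tau(\Gamma)$).

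For the inductive step, assume entries $1, \ldots, i-1$ of $Y$ and $\Gamma$ agree, and let $(r,c)$ denote the common cell of $i-1$. By the column superstandard filling order, these entries fill columns $1, \ldots, c-1$ completely and the top $r$ cells of column $c$, so the only possible addable cells for $i$ are $(r+1, c)$ (when $r < c_c$, the length of column $c$ in $\lambda$) and $(1, c+1)$ (when $c+1 \leq \lambda_1$). The characterization above identifies $s_{i-1} \in \tau(\Gamma)$ with the condition $r < c_c$, i.e.\ with placing $i$ in $\Gamma$ at $(r+1, c)$ rather than at $(1, c+1)$. Since $\tau(Y) = \tau(\Gamma)$, the same condition governs $Y$; together with the restricted list of addable cells (where $(1, c+1)$ lies in row $1 \leq r$ and $(r+1, c)$ lies strictly below row $r$), this forces $i$ in $Y$ into exactly the same cell as in $\Gamma$, completing the induction.

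The only step requiring care is the enumeration of addable cells after a column superstandard partial filling; one must verify that no addable cell lies in column $c$ below row $r+1$, nor in any column strictly beyond $c+1$. Each such candidate position would demand a currently unfilled cell directly above it (in the same column) or directly to its left (in the same row), so the short case check confirms only the two cells above are addable. I do not expect a major obstacle, since everything after this enumeration reduces to comparing which of two cells the $\tau$-condition selects.
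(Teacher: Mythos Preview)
Your argument is correct. The paper actually omits the proof entirely (``The proof is left to the reader''), so there is no approach to compare against; your induction on the entry label $i$, using the fact that a column superstandard partial filling leaves at most the two addable cells $(r+1,c)$ and $(1,c+1)$ and that membership of $s_{i-1}$ in $\tau(\Gamma)=\tau(Y)$ distinguishes them, is exactly the straightforward verification the authors had in mind. The enumeration-of-addable-cells step you flagged is easily handled as you indicate: any cell further down column $c$ has an unfilled cell directly above it, and any cell in a column past $c+1$ has an unfilled cell directly to its left, so neither can receive $i$ in a standard filling.
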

The proof is left to the reader.
\begin{lemma}
If $\tab, \tab'\in \mathscr{T}_n$ have different shapes then $\tau_g(\tab)\neq\tau_g(\tab')$.
\end{lemma}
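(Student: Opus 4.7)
The plan is to argue by contradiction: suppose $Y, Y' \in \Tn$ have shapes $\lambda \neq \mu$ yet $\tau_g(Y) = \tau_g(Y')$. The strategy is to use Lemma \ref{tableautosuperstandardlemma} to reduce both tableaux simultaneously and then exploit the rigid relationship between the $\tau$-invariant of a column superstandard tableau and the column lengths of its shape. By Lemma \ref{tableautosuperstandardlemma}, there is a sequence $\sigma$ of maps $\fyt_{i_k, j_k}$ carrying $Y$ to the column superstandard tableau $\Gamma_\lambda$ of shape $\lambda$. Because $\tau_g(Y) = \tau_g(Y')$, the matching $\tau$-invariants at each step ensure $\sigma$ is applicable to $Y'$, producing a tableau $Z'$ of shape $\mu$ with $\tau_g(\Gamma_\lambda) = \tau_g(Z')$. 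Symmetrically, a sequence $\sigma'$ carrying $Z'$ to the column superstandard $\Gamma_\mu$ carries $\Gamma_\lambda$ to some $W$ of shape $\lambda$ with $\tau_g(\Gamma_\mu) = \tau_g(W)$; in particular, $\tau(\Gamma_\lambda) = \tau(Z')$ and $\tau(\Gamma_\mu) = \tau(W)$.

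By the definition of column superstandard, $s_i \in \tau(\Gamma_\lambda)$ if and only if $i$ and $i+1$ sit in the same column of $\Gamma_\lambda$. Letting $c$ denote the length of the first column of $\lambda$, we thus have $s_1, \ldots, s_{c-1} \in \tau(\Gamma_\lambda) = \tau(Z')$. Reading these consecutive descents off in $Z'$ forces the entries $1, 2, \ldots, c$ to occupy strictly increasing rows, so $\mu$ has at least $c$ rows. The symmetric argument comparing $W$ and $\Gamma_\mu$ gives the reverse inequality, so $\lambda$ and $\mu$ have the same first column length. Moreover, the standard outer-corner argument pins down the only valid position for each successive entry and thus forces $1, 2, \ldots, c$ to occupy exactly column $1$ of $Z'$, matching their positions in $\Gamma_\lambda$.

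The proof then proceeds by induction on $n$ (the base case $n = 1$ being trivial). Excising the common first column from both $Z'$ and $\Gamma_\lambda$ and relabeling the remaining entries $c+1, \ldots, n$ as $1, \ldots, n-c$ produces smaller standard tableaux of shapes $\lambda$ and $\mu$ with their first columns removed. Any map $\fyt_{i,j}$ on these sub-tableaux corresponds under the index shift to $\fyt_{i+c, j+c}$ on the originals, and these shifted maps only rearrange entries greater than $c$, so column $1$ remains fixed. Consequently the generalized $\tau$-invariants descend to the sub-tableaux, and the inductive hypothesis forces the sub-shapes to agree; combined with the equality of first column lengths this gives $\lambda = \mu$, a contradiction. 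The main obstacle will be verifying this descent rigorously: one must check that each $\fyt_{i+c, j+c}$ operation on the full tableaux corresponds faithfully to $\fyt_{i,j}$ on the sub-tableaux (including that the $\dyt_{i,j}$ domain conditions translate), which is precisely where the ``column $1$ is fixed'' observation does the work.
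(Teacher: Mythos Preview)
Your argument is correct, but it takes a more circuitous route than the paper's. The paper avoids induction on $n$ entirely: after carrying $Y$ to the column superstandard $\Gamma$ of shape $\lambda$ via Lemma~\ref{tableautosuperstandardlemma}, it locates the \emph{first} column index $k$ where the column lengths of $\lambda$ and $\mu$ differ (say $c_k > c_k'$) and then argues directly that no tableau $Z$ of shape $\mu$ can match $\tau(\Gamma)$ on the initial segment $\{s_1,\ldots,s_{c_1+\cdots+c_k-1}\}$. The reason is exactly your ``consecutive descents force the entries into column~1'' observation, applied column by column up through column $k$; at column $k$ the tableau $Z$ runs out of room. One application of Lemma~\ref{tableautosuperstandardlemma} and one $\tau$-comparison suffice.

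Your version instead (i) applies Lemma~\ref{tableautosuperstandardlemma} twice, once in each direction, to squeeze out equality of the first column lengths by symmetry, and then (ii) peels off that column and inducts on $n$. This works, but the cost is the descent verification you flag at the end: you must check that the correspondence $\fyt_{i,j}\leftrightarrow\fyt_{i+c,j+c}$ respects both the domain conditions $\dyt$ and the recursive definition of $\underset{m}{\approx}$ at every level, and that the first column really stays frozen throughout. These checks are routine (the shifted maps touch only entries exceeding $c$, and $\tau$ of the sub-tableau is the index-shifted restriction of the full $\tau$), but they add bookkeeping the paper's argument sidesteps. What your approach buys is a cleaner conceptual statement---same generalized $\tau$-invariant forces same first column, then recurse---whereas the paper's buys brevity by going straight to the point of disagreement.
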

\begin{proof}
Let $Y$ and $Y'$ be elements of $\Tn$.  We will compare the column lengths $c_1, c_2, \ldots$ of $Y$ to the column lengths $c_1', c_2', \ldots$ of $Y'$.  Suppose without loss of generality that the first column in which they differ is the $k^{th}$ column, where $c_k > c_k'$.  Let $\Gamma$ be the column superstandard tableau with the same shape as $Y$. Observe that all of $\{s_i: 1 \leq i \leq c_1 + c_2 + \cdots + c_k-1\}$ are in $\tau(\Gamma)$ except $s_{c_1}, s_{c_1+c_2}, \ldots, s_{c_ 1+c_2+\ldots}$, which correspond to the numbers at the bottom of each of the first $k$ columns.  

Let $Z$ be any tableau with the same shape as $Y'$.  We will show that $\tau(\Gamma) \neq \tau(Z)$.  Suppose  $\{s_1, s_2, \ldots, s_{c_1-1}\} \subseteq \tau(Z)$.  Then each of the integers $1, 2, 3, \ldots, c_1$ is in a lower row than its predecessor.  There are only $c_1$ rows in $Z$ so $1$ is in the first row, $2$ in the second, and so on.  Moreover, each of $1,2,\ldots,c_1$ must be in the first column since $Z$ is standard.  Repeat this argument to conclude that if $\tau(Z)$ contains all of $\{s_i: 1 \leq i \leq c_1 + c_2 + \cdots + c_k-1\}$ except $s_{c_1}, s_{c_1+c_2}, \ldots, s_{c_ 1+c_2+\ldots}$, then $Z$ must be column superstandard in the first $k-1$ columns and contain at least $c_k$ entries in the $k^{th}$ column.  This contradicts the hypothesis on the columns of $Y$ and $Y'$.

By Lemma \ref{tableautosuperstandardlemma}, there exists a sequence of maps $\fyt_{i_1,j_2}, \fyt_{i_2,j_2}, \fyt_{i_3,j_3},\ldots$ that carries $Y$ to $\Gamma$.  The image of $Y'$ under this sequence of maps is a tableau $Z$ for which $\tau(Z) \neq \tau(\Gamma)$.  It follows that $\tau_g(Y)\neq \tau_g(Y')$.
\end{proof}
\begin{theorem}[Vogan]\label{gentautabthm}
If $Y,Y'\in\Tn$ and $\tau_g(\tab)=\tau_g(\tab')$ then $\tab=\tab'$.
\end{theorem}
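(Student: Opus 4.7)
The plan is to combine the two preceding lemmas (the shape lemma and Lemma \ref{Yequalsgamma}) with the straightening procedure of Lemma \ref{tableautosuperstandardlemma} and the bijectivity of the maps $\fyt_{i,j}$ from Lemma \ref{fytlemma}. Suppose $\tau_g(Y) = \tau_g(Y')$. First, by the immediately preceding lemma, $Y$ and $Y'$ must have the same shape $\lambda$; otherwise their generalized $\tau$-invariants would differ. Let $\Gamma$ denote the unique column superstandard tableau of shape $\lambda$.

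Next, invoke Lemma \ref{tableautosuperstandardlemma} to produce a sequence of maps $\fyt_{i_1,j_1}, \fyt_{i_2,j_2}, \ldots, \fyt_{i_r,j_r}$ whose successive composition carries $Y$ to $\Gamma$. I would then argue by induction on the length $r$ of this sequence that the same composition is defined on $Y'$ and produces a tableau $\Gamma'$ with $\tau(\Gamma') = \tau(\Gamma)$. Indeed, the equivalence $Y \underset{n}{\approx} Y'$ for all $n$ (which is what $\tau_g(Y) = \tau_g(Y')$ unpacks to) guarantees precisely two things at each stage: that $Y$ lies in $\dyt_{i_k,j_k}$ if and only if $Y'$ does, so the next map is legal on the $Y'$-side; and that after applying $\fyt_{i_k,j_k}$ to both sides the resulting tableaux remain equivalent to arbitrary order. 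After $r$ steps we obtain tableaux $\Gamma$ and $\Gamma'$ of the same shape $\lambda$ with $\tau(\Gamma') = \tau(\Gamma)$, so Lemma \ref{Yequalsgamma} forces $\Gamma' = \Gamma$.

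Finally, Lemma \ref{fytlemma}(2) says each $\fyt_{i,j}$ is a bijection with inverse $\fyt_{j,i}$. Applying the reverse sequence $\fyt_{j_r,i_r}, \ldots, \fyt_{j_1,i_1}$ to the common image $\Gamma = \Gamma'$ recovers $Y$ from one side and $Y'$ from the other, yielding $Y = Y'$.

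The only step requiring genuine care is the inductive argument that $\tau_g(Y) = \tau_g(Y')$ persists under application of a common map $\fyt_{i,j}$; this is exactly what the recursive clause of Definition \ref{gentaudef} is designed to give, so the main conceptual point is simply to recognize that the definition is tailored to support this kind of synchronized straightening. Everything else is bookkeeping, and the structure of the argument mirrors how one shows that the order of a tableau under any invertible rectification procedure is determined by the end state together with the sequence of moves.
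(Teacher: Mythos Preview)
Your proposal is correct and follows essentially the same route as the paper: reduce to equal shapes via the shape lemma, straighten $Y$ to the column superstandard $\Gamma$ via Lemma~\ref{tableautosuperstandardlemma}, observe that the same sequence of $\fyt$-maps is well-defined on $Y'$ and yields some $\Gamma'$ with $\tau(\Gamma')=\tau(\Gamma)$, apply Lemma~\ref{Yequalsgamma}, and invert. If anything, your inductive justification that the generalized $\tau$-invariant equality persists along the straightening sequence is more explicit than the paper's own argument, which simply asserts that ``the same sequence takes $Y'$ to a standard tableau $\Gamma'$ satisfying $\tau(\Gamma')=\tau(\Gamma)$.''
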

\begin{proof} We may assume that $\tab$ and $\tab'$ have the same shape by Lemma \ref{Yequalsgamma}.
Lemma \ref{tableautosuperstandardlemma} guarantees the existence of a sequence of maps $\fyt_{i_1,j_2}, \fyt_{i_2,j_2}, \fyt_{i_3,j_3},\ldots$ that take $\tab$ to a column superstandard tableau $\Gamma$.  The same sequence takes $\tab'$ to a standard tableau $\Gamma'$ satisfying $\tau(\Gamma')=\tau(\Gamma)$.  Lemma \ref{Yequalsgamma} implies $\Gamma=\Gamma'$. Because the $\fyt_{i,j}$-maps are invertible, we know $Y=Y'$.
\end{proof}

\section{Generalized \texorpdfstring{$\tau$}{tau}-invariants and the Robinson--Schensted Correspondence}\label{snsection}

In this section we define $\tau$-invariants and generalized $\tau$-invariants for elements of the symmetric group. Our exposition will follow the template of the previous section; the reader will notice more than stylistic similarities.  In fact, these $\tau$-invariants are closely related to those defined for Young tableaux: we prove in Theorem \ref{theorem: generalized tau invariants for perms and RS} that the generalized $\tau$-invariants of a permutation are precisely the generalized $\tau$-invariants for the left standard tableau corresponding to $w$ under the Robinson-Schensted algorithm. As before, standard results can be found in, e.g., Bj\"{o}rner-Brenti's work \cite{MR2133266}.

Take  the simple transpositions $s_1,s_2,\ldots, s_{n-1}$ as a generating set for $S_n$. The \emph{length} $\ell(x)$ of $x$  is the minimal number $k$ needed to write $x$ in terms of the generators $x = s_{i_1} s_{i_2} \cdots s_{i_k}$. Given $x,y\in S_n$  we say $x\leq y$ if $x$ is a (not necessarily connected) subword of a minimal-length expression for $y$ in terms of the generators $s_1, s_2, \ldots, s_{n-1}$. The partial order $\leq$ is known as the Bruhat order.

\begin{definition} Let $x \in S_n$.  The $\tau$-invariant of $x$ is the set $\tau(x)$ consisting of simple transpositions $s_i$ such that $s_i x<x$.
\end{definition}

The set $\tau(x)$ is often called the {\em descent set} of the permutation $x^{-1}$, or the \emph{left descent set} of $x$ in the terminology of \cite{MR2133266}.  

We use one-line notation for permutations as well as factorizations into simple reflections.  Recall that the one-line notation for $x$ is the sequence $x_1x_2x_3\cdots x_n$ where $x$ sends $1$ to $x_1$, $2$ to $x_2$, etc.  The $\tau$-invariant of $x$ is easy to read in one-line notation: in fact $s_i \in \tau(x)$ if and only if $i+1$ appears somewhere to the left of $i$ in the one-line notation for $x$.

The next definition gives an analogue of $\dyt_{i,j}$ for permutations.

\begin{definition}
If $s_i$ and $s_j$ are adjacent simple transpositions, let $\dsn_{i,j}$ be the set of all $x\in S_n$ such that $s_i\in\tau(x)$ and $s_j\notin\tau(x)$. 
\end{definition}

To construct an appropriate analogue of the map $\fyt_{i,j}$ we must introduce \emph{dual Knuth relations}.

\begin{definition}
Two elements $x,y\in S_n$ are said to be related by a \emph{dual Knuth step of type $i$} if their one-line notations differ: 
\begin{itemize}
\item by transposing $i$ and $i+1$ and if $i-1$ appears between $i$ and $i+1$; or 
\item by transposing $i-1$ and $i$ and if  $i+1$ appears between $i-1$ and $i$. 
\end{itemize}
We denote this by $x\dk{i}y$. If two symmetric group elements are related by a sequence of dual Knuth steps, then we say that $x$ and $y$ are \emph{dual Knuth equivalent}.
\end{definition}
\begin{example}
\[25413\dk{2} 35412\]
\end{example}
\begin{remark}\label{remark: relationship between Knuth and dual Knuth}
The literature often treats Knuth steps $ \knuth{i}$ as the primary relation and dual Knuth steps $\dk{i}$ as the subsidiary.  The two are dual in the sense that $x\dk i y$ if and only if $x\inv\knuth i y\inv$.  Knuth steps are stated in terms of {\em positions} of the permutation and correspond to multiplication on the right by $s_i$ or $s_{i-1}$, while dual Knuth steps treat {\em values} of the permutation and correspond to left-multiplication by $s_i$ or $s_{i-1}$.  
\end{remark}
Dual Knuth steps appear naturally in our applications, as follows.
\begin{remark}\label{remark: descents and dual Knuths}
If $x\dk i y$ then $\tau(x)$ and $\tau(y)$ each contain exactly one of $s_{i-1},s_i$.  In fact $x\in\dsn_{i-1,i}$ and $y\in\dsn_{i,i-1}$ or vice versa. The converse also holds, so given $x\in S_n$ there exists $y\in S_n$ satisfying $x\dk i y$ if and only if $x\in\dsn_{i-1,i}$ or $x\in\dsn_{i,i-1}$. 
\end{remark}
The previous remark leads us to define an analogue of $\fyt_{i,j}$ for permutations.
\begin{definition}\label{definition: fsn_{i,j}}
Let $s_i$ and $s_j$ be adjacent simple transpositions and $x$ an element of $\dsn_{i,j}$. Then exactly one of $s_i x$ and $s_j x$  is an element of $\dsn_{j,i}$.  We denote this unique element $\fsn_{i,j}(x)$.
\end{definition}
Remark \ref{remark: descents and dual Knuths} explained that $x\dk{k}\fsn_{i,j}(x)$, where $k$ is the larger of $i$ and $j$.

The next lemma is analogous to Lemma \ref{fytlemma}; the proof is left to the reader.

\begin{lemma}
The map $\fsn_{i,j}\colon\dsn_{i,j}\rightarrow \dsn_{j,i}$ is a bijection with inverse \linebreak $\fsn_{j,i}\colon\dsn_{j,i}\rightarrow \dsn_{i,j}$.
\end{lemma}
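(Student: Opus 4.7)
The plan is to exploit the symmetric role that $s_i$ and $s_j$ play in Definition \ref{definition: fsn_{i,j}}: both $\fsn_{i,j}$ and $\fsn_{j,i}$ are defined as left multiplication by \emph{one of the two involutions} $s_i, s_j$, selected by the uniqueness requirement that the image lie in the appropriate set $\dsn_{*,*}$. Since $s_i^2 = s_j^2 = 1$, applying this construction twice essentially just asks which preimage under the two-to-one map $\{s_i\cdot, s_j\cdot\}$ is the original element, and the uniqueness clause will force the answer.

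More concretely, first I would fix $x \in \dsn_{i,j}$ and set $y = \fsn_{i,j}(x)$. By definition $y \in \dsn_{j,i}$ and $y \in \{s_i x, s_j x\}$. Because $s_i$ and $s_j$ are involutions, this can be rewritten as $x \in \{s_i y, s_j y\}$. Now apply Definition \ref{definition: fsn_{i,j}} to $y \in \dsn_{j,i}$ (with the roles of $i$ and $j$ interchanged): exactly one of $s_i y, s_j y$ belongs to $\dsn_{i,j}$, and that element is $\fsn_{j,i}(y)$. Since $x$ itself lies in $\{s_i y, s_j y\} \cap \dsn_{i,j}$, uniqueness forces $\fsn_{j,i}(y) = x$. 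This proves $\fsn_{j,i} \circ \fsn_{i,j} = \mathrm{id}_{\dsn_{i,j}}$.

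The reverse composition $\fsn_{i,j} \circ \fsn_{j,i} = \mathrm{id}_{\dsn_{j,i}}$ then follows by the same argument with the roles of $i$ and $j$ swapped, since Definition \ref{definition: fsn_{i,j}} is symmetric in its indices (it only requires $s_i, s_j$ to be adjacent). Consequently $\fsn_{i,j}$ is a bijection with inverse $\fsn_{j,i}$.

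There is no real obstacle here: the two-sided well-definedness in Definition \ref{definition: fsn_{i,j}} is assumed (and ultimately justified by Remark \ref{remark: descents and dual Knuths}, which identifies $\fsn_{i,j}$ with the unique dual Knuth partner of type $k = \max(i,j)$). Once one views the pair $(\fsn_{i,j}, \fsn_{j,i})$ as the two directions of the symmetric dual-Knuth-$k$ relation restricted between $\dsn_{i,j}$ and $\dsn_{j,i}$, the involutive nature is immediate. If one preferred, one could give a one-line proof using Remark \ref{remark: descents and dual Knuths}: $x \dk{k} \fsn_{i,j}(x)$, and since $\dk{k}$ is a symmetric relation whose restriction to $\dsn_{i,j} \cup \dsn_{j,i}$ is a perfect matching, the map is its own inverse direction.
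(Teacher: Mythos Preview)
Your argument is correct and is exactly the elementary verification the paper has in mind: the paper itself leaves this proof to the reader, noting only that the lemma is analogous to Lemma \ref{fytlemma}. The essential point you identify---that $y=\fsn_{i,j}(x)$ means $y\in\{s_ix,s_jx\}$, hence $x\in\{s_iy,s_jy\}$ by the involutivity of $s_i,s_j$, and then the uniqueness clause in Definition \ref{definition: fsn_{i,j}} applied to $y\in\dsn_{j,i}$ forces $\fsn_{j,i}(y)=x$---is precisely the intended computation.
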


We may now define a generalized $\tau$-invariant for symmetric group elements by making the appropriate substitutions in Definition \ref{gentaudef}, as described in Remark \ref{remark: general gentaudef}.

The well-known \emph{Robinson-Schensted correspondence} gives a bijection between elements of $S_n$ and the set of ordered pairs of same-shape, standard Young tableaux with $n$ boxes. We sketch the algorithm in an extended example and refer the reader to the literature for details \cite[Section A3.3]{MR2133266}. Given $w\in S_n$ let $P(w)$ and $Q(w)$ denote respectively the left and right tableaux in the pair corresponding to $w$.
\begin{example} Let $w=54312$. We proceed left-to-right through the one-line notation for $w$. Each time, we add the next number of $w$ into the first row of the left tableau and we record where a new box was added in the right tableau. $P_i$ is the insertion tableau and $Q_i$ is the recording tableau at the $i^{th}$ step. 
\begin{itemize}
\item[54312:] Begin with the empty tableau for both $P$ and $Q$. $$(P_0, Q_0) = \left( \emptyset, \emptyset \right)$$
\item[\cancel{5}4312:] The insertion tableau gets the first number of $w$. The recording tableau gets a 1.     $$(P_1, Q_1) = \left( \, \raisebox{-3pt}{\young(5)} \, , \,  \raisebox{-3pt}{\young(1)} \, \right)$$
\item[\cancel{54}312:] Since $4<5$ the top entry is bumped down in the insertion tableau. The recording tableau notes the second box to be added.
$$ \hspace{.2in}  (P_2, Q_2) = \left( \, \raisebox{-7pt}{\young(4,5)} \, , \,  \raisebox{-7pt}{\young(1,2)} \, \right)$$
\item[\cancel{543}12:]   Since $3<4$, the top entry is bumped to the second row which in turn bumps that entry to the third row in the insertion tableau. The recording tableau notes the third box to be added.

$$(P_3, Q_3) = \left(\, \raisebox{-14pt}{\young(3,4,5)} \, , \, \raisebox{-14pt}{ \young(1,2,3)} \, \right)$$
\item[\cancel{5431}2:] Since $1<3$, the top entry is bumped to the second row, where it bumps that entry to the third row, where it bumps that entry to the fourth row of the insertion tableau.  The recording tableau notes the most recent box to be added.
$$(P_4, Q_4) = \left(\, \raisebox{-19pt}{\young(1,3,4,5)} \, , \, \raisebox{-19pt}{ \young(1,2,3,4)} \, \right)$$
\item[\cancel{54312}] Since $2>1$, we place it in the first row in the insertion tableau without bumping anything. The recording tableau notes where the last box was added.
$$(P_5, Q_5) = (P(w), Q(w)) =  \left(\, \raisebox{-19pt}{\young(12,3,4,5)} \, , \, \raisebox{-19pt}{ \young(15,2,3,4)} \, \right)$$
\end{itemize}
\end{example}
The next lemma collects classical results about how the Robinson-Schensted algorithm is affected by inverting a permutation and by Knuth equivalence, respectively. 
\begin{lemma}\label{lemma: Knuth results} Let $x,y \in S_n$.

\begin{itemize}
\item \cite[Fact A3.9.1]{MR2133266} Left and right tableaux satisfy $P(x)=Q(x\inv)$.

\item \cite[Lemma 6.4.4]{MR2133266} If $x\knuth{i}y$ then:
\begin{enumerate}
\item $P(x)=P(y)$.
\item $Q(y)$ is equal either to $s_{i-1}\cdot Q(x)$ or $s_i\cdot Q(x)$.
\end{enumerate}
\end{itemize}
\end{lemma}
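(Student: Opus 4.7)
The plan is to treat the two bullets separately, since both are classical consequences of the structure of row-insertion and appear (with proofs) in Bj\"orner--Brenti.

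For the claim $P(x) = Q(x\inv)$, I would invoke the symmetry of RSK.  The cleanest argument uses Fomin's growth diagrams: encode $x$ as a $0$-$1$ permutation matrix, construct a lattice of partitions via the local growth rule, and read off $P$ and $Q$ along the two boundary edges.  Transposing the matrix realizes $x\mapsto x\inv$ and transposes the entire growth diagram, swapping the roles of $P$ and $Q$.  Alternatively, one can induct on $n$ using the biword presentation of RSK: the biwords of $x$ and $x\inv$ differ by swapping their two rows and re-sorting, and the column-insertion algorithm is manifestly symmetric in the two rows, so reversing the roles of the rows exchanges the insertion and recording tableaux.

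For the second bullet I would proceed by a direct local analysis.  Suppose $x\knuth{i}y$, so they differ only by an elementary Knuth move at positions $\{i-1,i\}$ or $\{i,i+1\}$.  The first several insertions agree, producing identical partial tableaux $(P_{i-2},Q_{i-2})$.  The key computation is then to show that inserting the remaining two or three letters in either order yields the same $P$-tableau: this is the classical invariance of $P$ under elementary Knuth moves, and reduces to a short case check on bumping paths, where the ``middle'' letter of the Knuth triple forces the two relevant bumps into compatible rows.  Once $P(x)=P(y)$ is established, the $Q$-tableau can change only by a relabeling of which positions produced which boxes.  Swapping the input positions $i-1,i$ corresponds to interchanging the entries $i-1$ and $i$ in $Q$, i.e.\ left multiplication by $s_{i-1}$; swapping positions $i,i+1$ analogously gives left multiplication by $s_i$.

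The main obstacle is verifying standardness of the transposed $Q$, i.e.\ that the two swapped entries occupy boxes neither in the same row nor in the same column.  This is exactly the point where the hypothesis of an honest Knuth triple must be used: the configuration of the triple forces the two boxes created in succession to lie in different rows (and hence also different columns), so the transposition preserves both the row-strict and column-strict conditions.  With this case check in place, both parts of the lemma fall out of a careful bookkeeping of a single insertion step.
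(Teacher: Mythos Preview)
The paper gives no proof of this lemma: both parts are recorded as classical with citations to Bj\"orner--Brenti, so there is nothing in the text for your sketch to be compared against. Your treatment of the first bullet via growth diagrams or biword symmetry is standard and correct.

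Your argument for the second bullet has a real gap. You claim that swapping input positions $i-1,i$ interchanges the entries $i-1,i$ in $Q$ (and similarly for $i,i+1$), but this is false. Take $x=132$ and $y=312$ in $S_3$: this is the Knuth move $acb\leftrightarrow cab$ with $(a,b,c)=(1,2,3)$, hence a Knuth step of type $i=2$ swapping positions $i-1,i=1,2$. Your rule predicts $Q(y)=s_1\cdot Q(x)$, but one computes
\[
Q(x)=\young(12,3)\,,\qquad Q(y)=\young(13,2)\,,
\]
so in fact $Q(y)=s_2\cdot Q(x)=s_i\cdot Q(x)$. The underlying issue is that when the witness letter lies to the \emph{right} of the swapped pair, the partial $P$-tableaux for $x$ and $y$ need not coincide until all three letters have been inserted, so all three labels $i-1,i,i+1$ are a priori in play in $Q$; which adjacent pair actually gets transposed depends on how the bumping path of the witness interacts with those of the two swapped letters, and ruling out the remaining permutations of $\{i-1,i,i+1\}$ is exactly the case analysis that Bj\"orner--Brenti carry out. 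Your ``standardness obstacle'' is a red herring: $Q(y)$ is a recording tableau and hence automatically standard.
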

We can combine the previous properties with facts about dual Knuth steps to obtain a dual Knuth analogue of the previous lemma.  We state the result in terms of the function $\fsn_{i,j}$ to minimize notation.
\begin{lemma}\label{Knuthrelationlemma1}
Let $x,y\in S_n$ with $x=\fsn_{i,j}(y)$. Then:
\begin{enumerate}
\item $Q(x)=Q(y).$
\item $P(y)$ is equal either to $s_{i}\cdot P(x)$ or $s_j\cdot P(x)$.
\end{enumerate}
\end{lemma}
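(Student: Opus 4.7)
The plan is to convert the statement about $\fsn_{i,j}$ into a statement about a single dual Knuth step, dualize to a Knuth step on inverses, and then apply the already-cited classical facts.

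First, I would invoke Remark \ref{remark: descents and dual Knuths} and the sentence following Definition \ref{definition: fsn_{i,j}}: the hypothesis $x = \fsn_{i,j}(y)$ means exactly that $y \dk{k} x$, where $k = \max(i,j)$. Since $i,j$ are adjacent simple reflections, $\{k-1,k\} = \{i,j\}$; I will use this identification at the end.

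Next, I would apply Remark \ref{remark: relationship between Knuth and dual Knuth} to replace the dual Knuth relation with a Knuth relation on inverses: $y^{-1} \knuth{k} x^{-1}$. Now Lemma \ref{lemma: Knuth results}(2) applies to the pair $(x^{-1}, y^{-1})$, giving
\[
P(x^{-1}) = P(y^{-1}) \quad \text{and} \quad Q(y^{-1}) \in \{\, s_{k-1} \cdot Q(x^{-1}),\; s_k \cdot Q(x^{-1})\,\}.
\]

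Finally, I would translate these equalities back through the identity $P(w) = Q(w^{-1})$ of Lemma \ref{lemma: Knuth results}. The first equality becomes $Q(y) = Q(x)$, proving (1). The second becomes
\[
P(y) \in \{\, s_{k-1} \cdot P(x),\; s_k \cdot P(x)\,\} = \{\, s_i \cdot P(x),\; s_j \cdot P(x)\,\},
\]
proving (2). There is no real obstacle here; the only thing to be careful about is bookkeeping the direction of the duality (dual Knuth steps are values/left multiplication while Knuth steps are positions/right multiplication), which is precisely the content of Remark \ref{remark: relationship between Knuth and dual Knuth}, so the reduction is clean.
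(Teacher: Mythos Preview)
Your argument is correct and follows the same route as the paper: reduce $x=\fsn_{i,j}(y)$ to a dual Knuth step of type $k=\max(i,j)$, dualize to a Knuth step on inverses via Remark~\ref{remark: relationship between Knuth and dual Knuth}, and then read off both conclusions from Lemma~\ref{lemma: Knuth results}. The only cosmetic difference is that you make the identification $\{k-1,k\}=\{i,j\}$ and the use of $P(w)=Q(w^{-1})$ more explicit than the paper does.
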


\begin{proof}
By definition, the relation $x=\fsn_{i,j}(y)$ implies that $x \dk k y$, where $k$ is the larger of $i,j$.  Remark \ref{remark: relationship between Knuth and dual Knuth} described the relationship between Knuth and dual Knuth steps, namely that $x \dk k y$ if and only if $x^{-1} \knuth{k} y^{-1}$.  Inverting a permutation corresponds to exchanging the recording and insertion tableaux by Lemma \ref{lemma: Knuth results}.  We conclude that $Q(x) = Q(y)$ and that $P(y)$ equals either $s_i \cdot P(x)$ or $s_j \cdot P(x)$, again using Lemma \ref{lemma: Knuth results}.
\end{proof}

The next lemma shows that the $\tau$-invariants of $x$ and the insertion tableau $P(x)$ are the same; the proof tracks the Robinson--Schensted algorithm carefully.

\begin{lemma}\label{lemma: descents for YT and perms}
Let $x\in S_n$.  The transposition $s_i\in \tau(x)$ if and only $s_i\in \tau(P(x))$.
\end{lemma}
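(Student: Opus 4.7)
The plan is to reduce the lemma to a classical fact about descents and the recording tableau, then deduce that fact from Schensted's row--bumping lemma applied to a single pair of consecutive insertions.

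First, I would translate the two $\tau$--invariants into parallel combinatorial statements. By definition $s_i \in \tau(x)$ iff $s_i x < x$, which unpacked in one--line notation says that $i+1$ precedes $i$ in $x$; equivalently $x\inv(i+1) < x\inv(i)$, i.e.\ $i$ is a (right) descent of $w := x\inv$. Combining this with the identity $P(x) = Q(x\inv) = Q(w)$ supplied by Lemma \ref{lemma: Knuth results}, the entire claim reduces to the following statement about the recording tableau: for any $w \in S_n$, the label $i+1$ lies in a row strictly below $i$ in $Q(w)$ if and only if $w_i > w_{i+1}$.

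To prove this reduced statement, I would track the two successive RSK insertions of $w_i$ and $w_{i+1}$ into the partial tableau $P(w_1 \cdots w_{i-1})$. The label $i$ in $Q(w)$ occupies the new cell $T_i$ added at step $i$, and the label $i+1$ occupies the new cell $T_{i+1}$ added at step $i+1$; since later steps only adjoin further cells and never relocate existing ones, the relative positions of $T_i$ and $T_{i+1}$ in the final recording tableau are determined entirely by these two consecutive insertions. Schensted's row--bumping lemma gives the required dichotomy. If $w_i < w_{i+1}$, the cell $T_{i+1}$ lies strictly to the right of and weakly above $T_i$, so $i+1$ is not in a row strictly below $i$, and $s_i \notin \tau(Q(w))$. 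If instead $w_i > w_{i+1}$, the bumping lemma places $T_{i+1}$ weakly to the left of and strictly below $T_i$, so $i+1$ is strictly below $i$, and $s_i \in \tau(Q(w))$. This precisely matches the descent condition on $w$.

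The only substantive input is the row--bumping lemma itself, a standard tool in the RSK literature (available in the paper's reference Bj\"orner--Brenti). All other steps are bookkeeping: the equivalence $s_i \in \tau(x) \Leftrightarrow i \in \mathrm{Des}(x\inv)$, and the transpose identity $P(x) = Q(x\inv)$ already recorded in Lemma \ref{lemma: Knuth results}. I do not anticipate a serious obstacle; the one subtle point is the observation that once $T_i$ and $T_{i+1}$ are placed in $Q$, subsequent insertions cannot move them, so the local conclusion from the row--bumping lemma persists through the remainder of the algorithm.
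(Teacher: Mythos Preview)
Your proposal is correct, and it takes a genuinely different route from the paper's argument.

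The paper works \emph{directly} with the insertion tableau $P(x)$: since the entries $i$ and $i+1$ occur at arbitrary (not necessarily adjacent) positions in the one--line notation for $x$, the paper has to track both values through the entire run of the algorithm, arguing that whenever $i+1$ is inserted first it stays strictly below $i$, and whenever $i$ is inserted first it stays weakly below $i+1$. This is a self--contained invariant argument that never invokes the symmetry $P(x)=Q(x^{-1})$ or the row--bumping lemma.

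You instead pass through the symmetry of Lemma~\ref{lemma: Knuth results} to recast the question as a statement about the \emph{recording} tableau of $w=x^{-1}$, whereupon the two relevant cells are created at \emph{consecutive} steps $i$ and $i+1$. This lets you settle the matter in one shot via Schensted's row--bumping lemma, with no need to maintain an invariant across later insertions. Your approach is the more standard textbook argument and is arguably cleaner; the paper's direct approach has the mild advantage of avoiding any appeal to the $P(x)=Q(x^{-1})$ identity, keeping the proof entirely internal to the insertion process.
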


\begin{proof}
Consider $x$ in one-line notation.  The Robinson--Schensted algorithm successively inserts integers into the first row of the insertion tableau.  If $i$ is to the right of $i+1$, then either $i+1$ always stays on a row below that of $i$, or we attempt to insert $i$ into the row containing $i+1$ and bump $i+1$ to a lower row.  In both cases $i+1$ ends on a strictly lower row of $P(x)$ than $i$.  Conversely, suppose $i+1$ is to the right of $i$.  Inserting $i+1$ in the row that contains $i$ either bumps a larger number down or increases the length of that row.  If $i$ and $i+1$ are on the same row then $i$ will be bumped before $i+1$.  So at each stage of the Robinson--Schensted algorithm, the integer $i$ remains on the same row or below the row of $i+1$.
\end{proof}

The next corollary combines Lemma  \ref{Knuthrelationlemma1} with the argument from Lemma \ref{lemma: descents for YT and perms} to show that the recording tableau is invariant under the map $\fsn_{i,j}$ and the insertion tableau commutes with the map $\fsn_{i,j}$, in the sense that $P \circ \fsn_{i,j} = \fyt_{i,j} \circ P$.

\begin{corollary}\label{corollary: RS and the f maps}
Let $x\in \dsn_{i,j}$.  Then
\begin{enumerate}
\item $Q(\fsn_{i,j}(x))=Q(x)$.
\item $P(\fsn_{i,j}(x))=\fyt_{i,j}(P(x))$.
\end{enumerate}
\end{corollary}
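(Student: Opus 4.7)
My plan is to unpack $\fsn_{i,j}(x)$ into a two-element set of candidate insertion tableaux using Lemma \ref{Knuthrelationlemma1}, and then use the $\tau$-invariant compatibility of Robinson--Schensted from Lemma \ref{lemma: descents for YT and perms} to single out the correct candidate. Set $z := \fsn_{i,j}(x)$, so by Definition \ref{definition: fsn_{i,j}} we have $z \in \dsn_{j,i}$. Applying Lemma \ref{Knuthrelationlemma1} with our $x$ in the role of its $y$ (the input) and our $z$ in the role of its $x$ (the output) immediately gives part (1): $Q(z) = Q(x)$. The same application also tells us that $P(z) \in \{s_i \cdot P(x), s_j \cdot P(x)\}$, after rewriting the lemma's conclusion via the fact that $s_i$ and $s_j$ are involutions.

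For part (2), I would next recall that $\fyt_{i,j}(P(x))$ is, by Lemma \ref{fytlemma}, the \emph{unique} element of $\{s_i \cdot P(x), s_j \cdot P(x)\}$ that is both a standard tableau and lies in $\dyt_{j,i}$. Since $P(z)$ arises from Robinson--Schensted applied to a permutation, it is automatically standard, so I only need to check the membership $P(z) \in \dyt_{j,i}$; that is, $s_j \in \tau(P(z))$ and $s_i \notin \tau(P(z))$. This is precisely the conclusion of Lemma \ref{lemma: descents for YT and perms} applied to $z \in \dsn_{j,i}$, which transfers the $\tau$-invariant of $z$ verbatim to $P(z)$. Uniqueness then forces $P(z) = \fyt_{i,j}(P(x))$, proving (2).

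The main obstacle, such as it is, amounts to bookkeeping: Lemma \ref{Knuthrelationlemma1} is phrased with $P(y)$ (the input tableau) written in terms of $P(x)$ (the output tableau), whereas here I want to describe $P(z)$ in terms of $P(x)$ with $z$ as the output. Swapping the roles correctly when quoting the lemma is the only nontrivial point; every other step is a direct invocation of Lemmas \ref{fytlemma}, \ref{Knuthrelationlemma1}, or \ref{lemma: descents for YT and perms}, each of which has already done the substantive work.
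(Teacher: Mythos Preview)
Your proof is correct and follows essentially the same route as the paper's: invoke Lemma~\ref{Knuthrelationlemma1} for part~(1) and to narrow $P(\fsn_{i,j}(x))$ down to two candidates, then use Lemma~\ref{lemma: descents for YT and perms} to place $P(\fsn_{i,j}(x))$ in $\dyt_{j,i}$ and appeal to the uniqueness in Lemma~\ref{fytlemma}. Your explicit attention to the role-swap and the involution $s_i^2=\mathrm{id}$ when quoting Lemma~\ref{Knuthrelationlemma1} is a point the paper leaves implicit; otherwise the arguments coincide.
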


\begin{proof}
Part (1) follows directly from Part (1) of Lemma \ref{Knuthrelationlemma1}.  For Part (2), we apply Part (2) of Lemma \ref{Knuthrelationlemma1}, which says that $P(\fsn_{i,j}(x))$ is either $s_i\cdot P(x)$ or $s_j\cdot P(x)$.  Lemma \ref{lemma: descents for YT and perms} proves that $P(\fsn_{i,j}(x))$ and $x$ have the same $\tau$-invariants, so in particular $P(\fsn_{i,j}(x))\in \dsn_{j,i}$. Since $P(\fsn_{i,j}(x))$ is a standard tableau, Lemma \ref{fytlemma} implies that $P(\fsn_{i,j}(x))=\fyt_{i,j}(P(x))$ as desired.
\end{proof}

We have now built to one of the main claims of this section:  the generalized $\tau$-invariant of a permutation $x$ is the same as the generalized $\tau$-invariant of the insertion tableau $P(x)$.  In particular, if two permutations have the same generalized $\tau$-invariant then so do their corresponding insertion tableaux.  While the claim is important, the proof is a straightforward induction using previous results.

\begin{theorem}\label{theorem: generalized tau invariants for perms and RS}
Given $x,y\in S_n$, the generalized $\tau$-invariant $\tau_g(x)=\tau_g(P(x))$.  In particular we have $\tau_g(x)=\tau_g(y)$ if and only if $\tau_g(P(x))=\tau_g(P(y))$.
\end{theorem}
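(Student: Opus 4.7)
The plan is to prove the first assertion $\tau_g(x)=\tau_g(P(x))$ directly, by induction on the order $n$ of the equivalence relation $\underset{n}{\approx}$; the second assertion $\tau_g(x)=\tau_g(y) \iff \tau_g(P(x))=\tau_g(P(y))$ is then just transitivity of the relation ``having the same generalized $\tau$-invariant.'' Following Remark \ref{remark: general gentaudef}, I read the statement $x\underset{n}{\approx}P(x)$ in the natural cross-set sense: at order $0$ it asserts $\tau(x)=\tau(P(x))$ as subsets of $\{s_1,\ldots,s_{n-1}\}$, and at order $n$ it asserts equivalence at order $n-1$ together with $\fsn_{i,j}(x)\underset{n-1}{\approx}\fyt_{i,j}(P(x))$ for every adjacent pair $s_i,s_j$ with $x\in\dsn_{i,j}$ (equivalently, $P(x)\in\dyt_{i,j}$).

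The base case is handled by Lemma \ref{lemma: descents for YT and perms}, which identifies $\tau(x)$ with $\tau(P(x))$ as subsets of simple reflections. A key consequence of this base case is that the domains are synchronized: $x\in\dsn_{i,j}$ if and only if $P(x)\in\dyt_{i,j}$, so $\fsn_{i,j}$ is defined on $x$ precisely when $\fyt_{i,j}$ is defined on $P(x)$. This ensures the inductive clause of Definition \ref{gentaudef} is nonvacuous on the same pairs on both sides.

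For the inductive step, assume that $w\underset{n-1}{\approx}P(w)$ for every permutation $w\in S_n$. Given $x\in\dsn_{i,j}$, Corollary \ref{corollary: RS and the f maps}(2) gives the key commutation identity $P(\fsn_{i,j}(x))=\fyt_{i,j}(P(x))$. Applying the inductive hypothesis to the permutation $\fsn_{i,j}(x)$ then yields
\[
\fsn_{i,j}(x)\;\underset{n-1}{\approx}\;P(\fsn_{i,j}(x))\;=\;\fyt_{i,j}(P(x)),
\]
which is exactly the additional clause needed to upgrade from order $n-1$ to order $n$. Hence $x\underset{n}{\approx}P(x)$ for every $n$, proving $\tau_g(x)=\tau_g(P(x))$. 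The second assertion follows immediately: $\tau_g(x)=\tau_g(y)$ implies $\tau_g(P(x))=\tau_g(x)=\tau_g(y)=\tau_g(P(y))$, and the converse is identical.

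The only real obstacle is a bookkeeping one, namely confirming that the cross-set equivalence from Remark \ref{remark: general gentaudef} behaves transitively with the two within-set equivalences. This reduces to checking that the Robinson--Schensted insertion map $P$ intertwines the data $(\tau,D_{i,j},f_{i,j})$ on $S_n$ with the corresponding data on $\Tn$, which is precisely the content of Lemma \ref{lemma: descents for YT and perms} and Corollary \ref{corollary: RS and the f maps}. Everything else is a routine unwinding of Definition \ref{gentaudef}.
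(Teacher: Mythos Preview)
Your proof is correct and follows essentially the same approach as the paper: the paper's proof is a one-line instruction to induct on the order of equivalence, using Lemma~\ref{lemma: descents for YT and perms} as the base case and Corollary~\ref{corollary: RS and the f maps}(2) for the inductive step, which is precisely what you have unpacked. Your added remarks about domain synchronization ($x\in\dsn_{i,j}$ iff $P(x)\in\dyt_{i,j}$) and the cross-set reading of $\underset{n}{\approx}$ via Remark~\ref{remark: general gentaudef} are helpful clarifications that the paper leaves implicit.
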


\begin{proof}
Induct using Lemma \ref{lemma: descents for YT and perms} as the base case and Part (2) of Corollary \ref{corollary: RS and the f maps} as the inductive hypothesis.
\end{proof}

Finally, we come to the main claim of this section, which was originally proven by Vogan \cite{MR545215}:  the generalized $\tau$-invariant is preserved by the Robinson-Schensted algorithm, in the strong sense that the insertion tableau $P(x)$ is the {\em only} tableau whose generalized $\tau$-invariant is $\tau_g(x)$.  This means our definitions of generalized $\tau$-invariants for Young tableaux and permutations are intrinsic and natural.  

\begin{theorem}[The Robinson--Schensted Correspondence via $\tau_g$]\label{RSCtheorem}
Let $x\in S_n$.  The insertion tableau $P(x)$ is the unique element of $\Tn$ such that $\tau_g(P(x))=\tau_g(x)$, while $Q(x)$ is the unique tableau in $\Tn$ such that $\tau_g(Q(x))=\tau_g(x\inv)$.
\end{theorem}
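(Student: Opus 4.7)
The plan is to deduce this theorem immediately from two previously established results: Theorem \ref{theorem: generalized tau invariants for perms and RS}, which identifies the generalized $\tau$-invariant of a permutation with that of its insertion tableau, and Theorem \ref{gentautabthm} (Vogan's theorem for tableaux), which says that the generalized $\tau$-invariant is a \emph{complete} invariant on $\mathscr{T}_n$. Together these two results really do all the work, so the argument should be short and essentially formal.

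First I would dispatch the statement about $P(x)$. Theorem \ref{theorem: generalized tau invariants for perms and RS} gives $\tau_g(P(x))=\tau_g(x)$, which establishes the existence half. For uniqueness, I would suppose $Y\in\mathscr{T}_n$ satisfies $\tau_g(Y)=\tau_g(x)$ and chain it with the previous equality to get $\tau_g(Y)=\tau_g(P(x))$. Since $Y$ and $P(x)$ both lie in $\mathscr{T}_n$, Theorem \ref{gentautabthm} forces $Y=P(x)$.

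Next I would handle the statement about $Q(x)$ by reducing it to the first part via inversion. By Lemma \ref{lemma: Knuth results}, $Q(x)=P(x^{-1})$. Applying the first half of the theorem to the permutation $x^{-1}$ therefore shows that $Q(x)=P(x^{-1})$ is the unique element of $\mathscr{T}_n$ whose generalized $\tau$-invariant equals $\tau_g(x^{-1})$, which is exactly what is claimed.

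I do not anticipate any real obstacle: the genuine content of the theorem is hidden in the prior results, especially the uniqueness statement in Theorem \ref{gentautabthm}, whose proof has already been laid out via the column superstandard reduction (Lemma \ref{tableautosuperstandardlemma}) and Lemmas \ref{Yequalsgamma} and \ref{lemma: descents for YT and perms}. The only care needed is notational: when invoking Theorem \ref{theorem: generalized tau invariants for perms and RS} for the $Q$ part, one must apply it to $x^{-1}$ rather than $x$, and use Lemma \ref{lemma: Knuth results} to convert $P(x^{-1})$ into $Q(x)$.
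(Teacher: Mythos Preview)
Your proposal is correct and follows essentially the same approach as the paper's own proof: both invoke Theorem~\ref{theorem: generalized tau invariants for perms and RS} for existence and Theorem~\ref{gentautabthm} for uniqueness, and both handle $Q(x)$ via the identity $Q(x)=P(x^{-1})$ from Lemma~\ref{lemma: Knuth results}. Your treatment of the $Q(x)$ part is marginally more streamlined (applying the already-proven $P$ statement directly to $x^{-1}$ rather than rerunning the argument), but the substance is identical.
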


\begin{proof}
Suppose $Y$ is an element of $\Tn$ with $\tau_g(Y)=\tau_g(x)$.  Then $\tau_g(Y)=\tau_g(P(x))$ by Theorem \ref{theorem: generalized tau invariants for perms and RS}, and so by Theorem \ref{gentautabthm} we have $Y = P(x)$.  Similarly, suppose $Y'$ is an element of $\Tn$ with $\tau_g(Y')=\tau_g(x^{-1})$.  We again know  $\tau_g(x^{-1})=\tau_g(P(x^{-1}))$ by Theorem \ref{theorem: generalized tau invariants for perms and RS}, and $\tau_g(P(x^{-1})) = \tau_g(Q(x))$ by Lemma \ref{lemma: Knuth results}.  It follows that $\tau_g(Y') = \tau_g(Q(x))$ and hence by Theorem \ref{gentautabthm} we conclude $Y' = Q(x)$.
\end{proof}

\section{Kazhdan--Lusztig Theory}\label{klsection}
In the previous sections, we described the generalized $\tau$-invariant for permutations and for standard tableaux, which collects the data of certain chains of inversions.  We showed that the generalized $\tau$-invariant is in fact a complete invariant, in the sense that  the generalized $\tau$-invariant uniquely identifies the permutation or standard tableau, respectively.  We then showed that the classical Robinson-Schensted correspondence between permutations and (pairs of) standard tableaux preserves the generalized $\tau$-invariant, so these different descriptions are essentially the same.

Standard tableaux are often used to construct irreducible representations of the symmetric group.  In this section, we review the essential parts of Kazhdan-Lusztig theory \cite{MR560412}, which gives an alternate construction of irreducible representations of the symmetric group, via representations of its associated Hecke algebra.  We then recall results of Bj\"{o}rner's that relate the {\em left cells} of Kazhdan-Lusztig theory to the standard tableaux corresponding to a particular irreducible representation.

We begin with the classical description of the Hecke algebra of $S_n$, though we use the deformation variable $v$ so that $q$ can be used later.  (Section \ref{webandkl} gives a  different presentation of the Hecke algebra that is more convenient in that context.)
\begin{definition}
Let $A$ be the ring $\bZ[v^{1/2},v^{-1/2}]$. The \emph{Hecke algebra} $\mathscr{H}_n$ of the symmetric group $S_n$ is the associative $A$-algebra with generators $\T{1},\T{2},\ldots, \T{n-1}$ and relations
\begin{equation}\T{i}\T{j}=\T{j}\T{i}\text{ if }\lvert i-j\rvert>1\end{equation}
\begin{equation}\T{i}\T{j}\T{i}=\T{j}\T{i}\T{j}\text{ if }\lvert i-j\rvert=1\end{equation}
\begin{equation}(\T{i}+1)(\T{i}-v)=0.
\label{heckerelation}
\end{equation}
Given $w\in S_n$ and any reduced expression $w = s_{i_1}s_{i_2}\ldots,s_{i_k}$ in terms of the simple transpositions, we write
$
T_{w}=\T{i_1}\T{i_2}\ldots\T{i_k}
$. One can check that the elements $T_w$ are well-defined.
\end{definition}
When we evaluate at $v=1$, the Hecke algebra $\Hn$ reduces to the group algebra of $S_n$ over $\bZ$. 

Define an involution $a\mapsto\overline{a}$ of $A$ by 
letting $\ol{v^{1/2}}=v^{-1/2}$ and extending linearly to the rest of $A$. We define an involution of $\mathscr{H}_n$ by 
\[
\ol{a T_w}=\ol a\, T_{w\inv}\inv.
\]
(Equation \eqref{heckerelation} can be rewritten as an equality between the unit $v \in A$ and a multiple of $T_{s_i}$.  Thus $T_{s_i}$ and hence $T_w$ are units in $\mathscr{H}_n$, as implied in this involution.)

The first main theorem in Kazhdan-Lusztig theory constructs a unique basis whose elements $C_w$ are (1) invariant under the involution on $\mathscr{H}_n$ and (2) a sum $\sum_{y \leq w} q_{y,w} T_y$ for Laurent polynomials $q_{y,w}$ in $v^{1/2}$.
\begin{theorem}[Kazhdan--Lusztig]
For any $w\in S_n$ there is a unique element $C_w\in \Hn$ such that 
\begin{itemize}
\item[] $\ol{C_w}=C_w$
\item[] 
$C_w=v^{\ell(w)/2}\sum_{y\leq w} (-1)^{\ell(w)-\ell(y)}v^{-\ell(y)}\,\ol{P_{y,w}}\, T_y$
\end{itemize}
where $P_{y,w}\in A$ is a polynomial in $v$ of degree less than or equal to $(\ell(w)-\ell(y)-1)/2$ for $y<w$ and $P_{w,w}=1$. Furthermore $\{C_w\}_{w\in S_n}$ is a basis of $\Hn$. 
\end{theorem}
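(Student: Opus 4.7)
The plan is to prove uniqueness and existence separately — uniqueness first, so that it can anchor the inductive construction of existence — and then deduce the basis claim from the evident triangularity of the transition matrix.

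For uniqueness, suppose $C, C' \in \Hn$ both satisfy the conditions and set $D = C - C'$. Then $D$ is bar-invariant, the normalization $P_{w,w} = 1$ forces the $T_w$-coefficient of $D$ to vanish, and the degree bound on each $P_{y,w}$ translates (after dividing out the overall factor $v^{\ell(w)/2}$) to the statement that each remaining $T_y$-coefficient of $D$ lies in $v^{-1/2}\bZ[v^{-1/2}]$ with no constant term. Using the standard triangular expansion $\overline{T_y} = T_y + \sum_{z<y} R_{z,y}(v)\, T_z$, a descending induction on $\ell(y)$ kills $D$: bar-invariance forces the top nonzero coefficient to equal its own bar, which together with the one-sided degree constraint forces it to vanish, and peeling off one rank at a time gives $D = 0$.

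For existence, induct on $\ell(w)$, with trivial base case $C_e = T_e$. In the inductive step, fix $w$ with $\ell(w) \geq 1$, choose a simple reflection $s$ with $sw < w$, and set $w' = sw$. A direct computation from \eqref{heckerelation} gives $\overline{T_s} = v^{-1}T_s + (v^{-1}-1)$, from which one verifies that bar is an algebra homomorphism of $\Hn$ and that $C_s := v^{-1/2}T_s - v^{1/2}$ is bar-invariant; hence $C_s \cdot C_{w'}$ is bar-invariant. Expanding this product in the $T_y$ basis — using the rule for $T_s T_y$ that splits according as $sy > y$ or $sy < y$ — yields an element whose $T_w$-coefficient already has the desired leading form $v^{-\ell(w)/2}$, but whose lower $T_y$-coefficients may violate the degree bound $\deg P_{y,w} \leq (\ell(w)-\ell(y)-1)/2$. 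Subtract finitely many bar-invariant corrections $\mu(z,w')\, C_z$, where $z<w'$ satisfies $sz<z$ and the integer $\mu(z,w') \in \bZ$ is extracted as the offending top-degree coefficient; each $C_z$ is bar-invariant by induction, so subtraction preserves bar-invariance, and the process terminates because each step strictly reduces the maximum offending degree. The basis claim then follows because the $C_w$-to-$T_w$ change-of-basis is unitriangular with respect to Bruhat order (after the scaling by $v^{-\ell(w)/2}$), hence invertible over $A$.

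The main obstacle is verifying that the correction coefficients $\mu(z,w')$ genuinely lie in $\bZ$ rather than merely in $A$, and that a single pass of subtractions simultaneously enforces every degree bound for all $y \leq w$. This rests on careful bookkeeping of how the quadratic relation \eqref{heckerelation} redistributes degrees when expanding $T_s T_y$ in the case $sy < y$, and is precisely the delicate integrality step that distinguishes the Kazhdan--Lusztig polynomials $P_{y,w}$ from arbitrary elements of $A$.
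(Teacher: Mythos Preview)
The paper does not provide a proof of this theorem; it is stated as a result of Kazhdan and Lusztig with a citation to their original paper. Your outline follows their argument, so there is nothing to compare beyond noting that the strategies agree.

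That said, your sketch contains a slip and a gap. The slip is in the uniqueness paragraph: the expansion you quote, $\overline{T_y}=T_y+\sum_{z<y}R_{z,y}(v)\,T_z$, is wrong in this normalization --- the top term is $v^{-\ell(y)}T_y$, not $T_y$ (already visible for $y=s$, since $\overline{T_s}=T_s^{-1}=v^{-1}T_s+v^{-1}-1$ from \eqref{heckerelation}). Consequently your claim that each $T_y$-coefficient of $D$ lies in $v^{-1/2}\bZ[v^{-1/2}]$ fails: for small $\ell(y)$ that coefficient can carry positive powers of $v$ up to $v^{(\ell(w)-2\ell(y))/2}$. The clean fix is to pass to $H_y:=v^{-\ell(y)/2}T_y$; then $\overline{H_y}=H_y+(\text{strictly lower terms})$, the $H_y$-coefficient of $C_w$ for $y<w$ genuinely lies in $v^{1/2}\bZ[v^{1/2}]$, and your descending induction goes through verbatim.

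The gap is the one you flag yourself in the last paragraph. In the original argument one does \emph{not} iterate by ``reducing the maximum offending degree''; one subtracts all the corrections $\sum_{sz<z,\,z<w'}\mu(z,w')\,C_z$ in a single pass and then verifies directly, from the two-case rule for $T_sT_y$, that every resulting $P_{y,w}$ obeys its degree bound. The integrality of the $\mu$'s is the easy part (they are top coefficients of the integer polynomials $P_{z,w'}$ supplied by induction); the simultaneous degree check is the actual work, and your sketch has not carried it out.
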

The basis $\{C_w\}$ is the \emph{\kl basis}, and the polynomials $P_{y,w}$ are referred to as \emph{\kl polynomials.}  The \kl basis and \kl polynomials have been intensely studied by combinatorists and representation theorists alike.  

The following weighted graph gives a convenient shorthand for describing the action of $\T{i}$ on $\Hn$.  The graph will also be useful in the next section, where we will show that it is closely related to $\fsn_{i,j}$ and $\dsn_{i,j}$.
\begin{definition}
The \emph{Kazhdan--Lusztig graph} of $\Hn$ is the undirected graph whose vertices are labeled by the elements of $S_n$.  %, with $\tau$-invariant data attached to each vertex.  Q from JT: what does this mean? should it say edge? is it clear that the info is $\tau$-invariant? is this comment necessary/helpful?
Let $y,w\in S_n$ with $y\leq w$.  The multiplicity of the edge between $y$ and $w$ is given by
\[
\mu(y,w)=\begin{cases}
\text{the leading coefficient of }P_{y,w} \mbox{ if }\operatorname{deg}(P_{y,w})=(\ell(w)-\ell(y)-1)/2
\\
0 \mbox{ otherwise.}
\end{cases}
\]
In addition, define $\mu(w,y)=\mu(y,w)$. 
\end{definition}

\kl polynomials are in $A$ so the multiplicities $\mu(w,y)$ are integers. Note that $\mu(w,y) = 0$ means that there is no edge between $y$ and $w$.
 
The left action of a generator $\T{i}$ on $\Hn$ in terms of the basis $\{C_w\}_{w\in S_n}$ can be written explicitly using the function $\mu(w,y)$:
\begin{equation}\label{klaction}
\T{i} C_w=
\begin{cases}
-C_w&\mbox{if } s_i\in \tau(w), \textup{ and}\\
\\
v C_w+v^{1/2}\sum_{\substack{y\in S_n\\ 
s_i\in\tau(y)}}
\mu(w,y) C_y&\mbox{otherwise.}
\end{cases}
\end{equation}

Treating $\Hn$ itself as a left $\Hn$-module, we wish to decompose $\Hn$ into irreducibles. As it turns out, the natural irreducible objects will be quotients of certain subspaces of $\Hn$.  To define them, we need to understand \emph{left cells}.

\begin{definition}\label{Definition: left cell}
Define a binary relation $\preceq_L$ on $S_n$ by letting $x\preceq_L x$ and $x\preceq_L y$ whenever $\mu(x,y)\neq 0$ and $\tau(x)\not\subset\tau(y)$.  Extend $\preceq_L$ to a preorder by imposing transitivity. We refer to $\preceq_L$ as the \emph{left preorder on $S_n$}.
\end{definition}

The conditions defining $x \preceq_L y$ are equivalent to saying there is an $i$ for which $C_x$ appears as a summand in $\T{i} C_y$.

Our definition of $\preceq_L$ follows Bj\"{o}rner-Brenti's \cite{MR2133266}; Kazhdan-Lusztig used $\prec$ slightly differently in their original paper \cite{MR560412}.

\begin{definition}
Define an equivalence relation $\sim_L$ on $S_n$ by letting $x\sim_L y$ if $x\preceq_L y$ and $y\preceq_L x$. The equivalence classes under $\sim_L$ are called \emph{left cells} and the preorder $\preceq_L$ descends to a partial order on left cells.
\end{definition}

We can now construct \emph{left cell modules} and \emph{left cell representations}.
Let $\operatorname{Cell}(S_n)$ denote the set of left cells in $S_n$ ordered by $\preceq_L$. Given $\mathcal C\in\operatorname{Cell}(S_n)$ define
\[\spn_A{\mathcal C}=\spn_A\{C_w\vert w\in\mathcal C\}.\]
\begin{definition}
Let $\mathcal C\in \operatorname{Cell}(S_n)$.  The \emph{left cell module} $\operatorname{KL}_\mathcal C$ for $\mathcal C$ is defined as
\[
\operatorname{KL}_\mathcal C=\Biggl(\ 
\bigoplus_{\substack{\mathcal C_i\in \operatorname{Cell}(S_n)\\ \mathcal C_i\preceq_L\mathcal C}}
\spn_A \CC_i\Biggr)\Bigg/\Biggl(\ \bigoplus_{\substack{\mathcal C_i\in \operatorname{Cell}(S_n)\\ \mathcal C_i\prec_L\mathcal C}}
\spn_A \CC_i\Biggr).
\]
\end{definition}

For each $w \in \CC$ we often consider the image of $C_w$ under the natural projection to $\operatorname{KL}_\mathcal C$, and also denote this image by $C_w$.  We can explicitly compute the action of $\T{i}$ on the left cell module $\operatorname{KL}_{\mathcal C}$ by restriction.  Explicitly, if $C_w \in \operatorname{KL}_\mathcal C$ then we simply modify Equation \eqref{klaction}:
\begin{equation}\label{klcellaction}
\T{i} C_w=
\begin{cases}
-C_w\text{ if } s_i\in \tau(w),\\
\\
v C_w+v^{1/2}\sum_{\substack{y\in \CC\\ 
s_i\in\tau(y)}}
\mu(w,y) C_y\text{ otherwise.}
\end{cases}
\end{equation}
\begin{definition}
Consider the complex vector space obtained from $\operatorname{KL}_\mathcal C$ by  evaluating at $v=1$ and extending scalars from $\mathbb{Z}$ to $\bC$. Under the action of $S_n$ given by Equation \ref{klcellaction}, this vector space is called the \emph{left cell representation} derived from the cell $\CC$ and also  denoted $\operatorname{KL}_\mathcal C$.
The \emph{left cell graph} of $\mathcal{C}$ is the restriction of the \kl graph to $\mathcal{C}$.
\end{definition}

Recall that the number of irreducible representations of $S_n$  
equals the number of partitions of $n$. The following bijection from representations to partitions is due to Young; other classical constructions of the irreducible representations yield equivalent maps. (See \cite[Theorem 10.1.1]{MR1251060} and the discussion in the beginning of Section 6.5 of \cite{MR2133266}.)
\begin{theorem}\label{thm:snrepresentations}
 Let $\mathbf{p}=[p_1,p_2,\ldots]$ be a partition of $n$ and $\mathbf t=[t_1,t_2,\ldots]$ its transpose. There exists a unique irreducible $S_n$ representations $\pi_{\mathbf p}$ whose restriction to $\Pi_{p_i\in \mathbf p}S_{p_i}$ contains the trivial representation and whose restriction to $\Pi_{t_i\in \mathbf t}S_{t_i}$ contains the sign representation.
\end{theorem}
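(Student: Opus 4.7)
The plan is to translate both hypotheses via Frobenius reciprocity and then show that the resulting two induced representations share a unique irreducible constituent. Set $M_{\mathbf p} := \mathrm{Ind}_{S_{\mathbf p}}^{S_n} \mathbf{1}$ and $N_{\mathbf t} := \mathrm{Ind}_{S_{\mathbf t}}^{S_n} \mathrm{sgn}$, where $S_{\mathbf p} = \prod_i S_{p_i}$ and $S_{\mathbf t} = \prod_j S_{t_j}$. By Frobenius reciprocity, an irreducible $\pi$ contains the trivial in its restriction to $S_{\mathbf p}$ if and only if $\pi$ is a constituent of $M_{\mathbf p}$; the analogous statement holds for sign and $N_{\mathbf t}$. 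So it suffices to exhibit exactly one irreducible appearing in both $M_{\mathbf p}$ and $N_{\mathbf t}$.

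For existence, I would build $\pi_{\mathbf p}$ from a Young symmetrizer. Fix a filling $T$ of the Young diagram of $\mathbf p$; its row stabilizer $R(T)$ is isomorphic to $S_{\mathbf p}$ and its column stabilizer $C(T)$ is isomorphic to $S_{\mathbf t}$. Form $a_T = \sum_{r \in R(T)} r$, $b_T = \sum_{c \in C(T)} \mathrm{sgn}(c)\, c$, and set $\pi_{\mathbf p} := \mathbb{C}[S_n]\, a_T b_T$. The standard Young-symmetrizer theory (culminating in von Neumann's identity $(a_T b_T)^2 = \kappa\, a_T b_T$ with $\kappa \neq 0$) shows that $\pi_{\mathbf p}$ is irreducible. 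The vector $a_T b_T$ is fixed by the left action of $R(T)$, so $\pi_{\mathbf p}|_{S_{\mathbf p}}$ contains a trivial summand; and $b_T a_T b_T \in \pi_{\mathbf p}$ is a $C(T)$-sign eigenvector, nonzero because $a_T \cdot (b_T a_T b_T) = \kappa\, a_T b_T \neq 0$, so $\pi_{\mathbf p}|_{S_{\mathbf t}}$ contains a sign summand.

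For uniqueness, I would compute $\dim \mathrm{Hom}_{S_n}(M_{\mathbf p}, N_{\mathbf t})$ in two ways. First, decomposing each module into irreducibles yields $\sum_\pi [M_{\mathbf p}:\pi] \cdot [N_{\mathbf t}:\pi]$, which is at least $1$ since $\pi_{\mathbf p}$ is a common constituent. Second, Mackey's theorem expresses the same dimension as a sum over double cosets $g \in S_{\mathbf p} \backslash S_n / S_{\mathbf t}$ of $\dim \mathrm{Hom}_{S_{\mathbf p} \cap g S_{\mathbf t} g^{-1}}(\mathbf 1, \mathrm{sgn})$. Such double cosets correspond to nonnegative integer matrices $M$ with row sums $\mathbf p$ and column sums $\mathbf t$; the intersection subgroup becomes $\prod_{i,j} S_{M_{ij}}$, on which $\mathrm{Hom}(\mathbf 1, \mathrm{sgn})$ has dimension $1$ exactly when every $M_{ij} \in \{0,1\}$, and dimension $0$ otherwise. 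The main obstacle, and the point where the hypothesis that $\mathbf t$ is specifically the transpose of $\mathbf p$ is essential, is to show that the unique such $\{0,1\}$-matrix is the indicator of the Young diagram of $\mathbf p$; this follows by a short row-by-row induction, since $p_1 = \ell(\mathbf t)$ forces the top row to be all $1$s, reducing to a smaller problem. Combining the two counts then forces $\pi_{\mathbf p}$ to be the unique common irreducible constituent, completing the proof.
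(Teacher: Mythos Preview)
Your argument is correct. The existence part via Young symmetrizers is standard and cleanly done; your observation that $b_T a_T b_T$ is a nonzero sign-eigenvector because $a_T(b_T a_T b_T)=(a_T b_T)^2=\kappa\,a_T b_T\neq 0$ is exactly the right check. The uniqueness part via Mackey is also correct: the double cosets $S_{\mathbf p}\backslash S_n/S_{\mathbf t}$ are parametrized by nonnegative integer matrices with prescribed row and column sums, the Hom on each piece vanishes unless all entries are $0$ or $1$, and your row-by-row induction (using $p_1=\ell(\mathbf t)$) correctly pins down the Young-diagram indicator as the unique $\{0,1\}$-matrix with these margins. Hence $\dim\Hom_{S_n}(M_{\mathbf p},N_{\mathbf t})=1$, forcing a unique common irreducible constituent.

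As for comparison: the paper does not give its own proof of this theorem. It is stated as a classical result attributed to Young, with a citation to Theorem~10.1.1 of Collingwood--McGovern and to the discussion at the start of Section~6.5 in Bj\"orner--Brenti. So there is no in-paper argument to compare against; your proposal in fact supplies what the paper omits. The approach you have chosen---Frobenius reciprocity together with Mackey decomposition to count common constituents---is one of the standard textbook routes to this fact, and is self-contained in a way the paper's citation is not.
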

We  often refer to an irreducible $S_n$-representation via the Young diagram whose row-lengths describe the corresponding partition.

This parameterization of  irreducible representations of $S_n$ is closely related both to the Robinson--Schensted correspondence and to the decomposition of $S_n$ into left cells. As in Section \ref{snsection}, let $P(w)$ and $Q(w)$ denote the left and right tableaux respectively in the pair corresponding to $w\in S_n$. 

The following two theorems extend results of Kazhdan-Lusztig \cite[Theorem 1.4]{MR560412}. They were first published by Garsia-McLarnan \cite{MR937317}, who attributed the proofs to Bj\"{o}rner; Bj\"{o}rner-Brenti's book also has elegant proofs \cite[Theorems 6.5.1-3, pp. 189-190]{MR2133266}.

The first theorem says that  \kl left cells can also be described as the permutations that share a fixed right tableaux under the Robinson-Schensted algorithm.  Moreover, the left cell representation is exactly the irreducible representation corresponding to the shape of the right tableau.
\begin{theorem}[Bj\"{o}rner] \label{RSleftcell}
For each left cell $\CC$ of $S_n$ there exists a standard Young tableau $Q_{\CC}$ with $n$ boxes such that
\[
\CC=\{w\in S_n\vert Q(w)=Q_{\CC}\}.
\]
Furthermore $\operatorname{KL}_\mathcal C$ is isomorphic as a complex representation  to the irreducible representation parameterized by the shape of $Q_{\CC}$. 
\end{theorem}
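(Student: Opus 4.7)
The plan is to prove the set-theoretic claim by establishing the equivalence ``$w \sim_L w'$ if and only if $w$ and $w'$ are dual Knuth equivalent,'' since by Lemma \ref{lemma: Knuth results} together with Remark \ref{remark: relationship between Knuth and dual Knuth} the dual Knuth equivalence classes are exactly the fibers of $w \mapsto Q(w)$. The tableau $Q_\CC$ is then defined as the common value of $Q$ on the cell.

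For the forward implication, suppose $x \in \dsn_{i,j}$ and set $y = \fsn_{i,j}(x) = s_k x$ for some $k \in \{i, j\}$. This is a simple Bruhat cover, so the Kazhdan--Lusztig polynomial $P_{x,y}$ equals $1$ and hence $\mu(x,y) = 1$. By the definition of $\fsn_{i,j}$, the $\tau$-invariants $\tau(x)$ and $\tau(y)$ each contain exactly one of $\{s_i, s_j\}$, so neither is contained in the other; Definition \ref{Definition: left cell} then gives $x \sim_L y$, and iteration over dual Knuth chains shows that each dual Knuth class is contained in a single left cell. The converse is the main obstacle: by Theorem \ref{RSCtheorem} it is enough to verify that $\sim_L$ preserves the right generalized $\tau$-invariant $\tau_g(w^{-1})$, and by Definition \ref{Definition: left cell} this reduces to showing that whenever $\mu(x,y) \neq 0$ with $\tau(x), \tau(y)$ incomparable, the pair $(x,y)$ already arises from a single dual Knuth step. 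This is the Kazhdan--Lusztig ``multiplicity-one'' analysis: the descent incomparability forces $y = s_k x$ for some $k$ whose descent pattern realizes $\fsn_{i,j}$. This is the only non-formal input and is classical \cite{MR560412,MR2133266}.

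For the representation-theoretic claim, evaluate Equation \eqref{klcellaction} at $v = 1$ to obtain the $S_n$-action on $\operatorname{KL}_\CC$. The set-theoretic description just proved gives $\dim \operatorname{KL}_\CC = |\CC|$, which by Robinson--Schensted equals the number of standard Young tableaux of shape $\mathbf p$ (the shape of $Q_\CC$) that can serve as an insertion tableau, and hence equals $\dim \pi_{\mathbf p}$. Combining with the filtration of $\bC[S_n]$ by cell modules and the classical isotypic decomposition $\bC[S_n] \cong \bigoplus_{\mathbf p}(\dim \pi_{\mathbf p})\, \pi_{\mathbf p}$, a standard dimension-and-multiplicity comparison, together with the fact that each $\pi_{\mathbf p}$ must appear somewhere among the cell modules, forces $\operatorname{KL}_\CC \cong \pi_{\mathbf p}$.
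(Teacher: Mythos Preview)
First, note that the paper does not supply its own proof of Theorem \ref{RSleftcell}; it is quoted as a known result with references to Garsia--McLarnan \cite{MR937317} and Bj\"{o}rner--Brenti \cite{MR2133266}, so there is no in-paper argument to compare against.

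Your forward implication (dual Knuth class $\subseteq$ left cell) is correct and is essentially the content of Theorem \ref{klexchange}, proved later in the paper.

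The converse contains a genuine gap. You reduce to the claim: whenever $\mu(x,y)\neq 0$ with $\tau(x),\tau(y)$ incomparable, the pair differs by a single dual Knuth step. This auxiliary claim is actually \emph{true}---incomparability gives some $s_k\in\tau(y)\setminus\tau(x)$, and then Kazhdan--Lusztig's (2.3.e) forces $y=s_kx$; since $\tau(x)$ and $\tau(s_kx)$ can differ only at $s_{k-1},s_k,s_{k+1}$, the element of $\tau(x)\setminus\tau(y)$ must be $s_{k\pm1}$, realizing a dual Knuth pattern. But the \emph{reduction} fails. The relation $x\sim_L y$ is defined via the transitive closure: a chain $x=z_0\preceq_L z_1\preceq_L\cdots\preceq_L z_m=y$ satisfies only $\tau(z_i)\not\subset\tau(z_{i+1})$ at each step, and nothing prevents $\tau(z_{i+1})\subsetneq\tau(z_i)$. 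For such nested steps your auxiliary claim says nothing, and you cannot conclude that consecutive $z_i$ are dual Knuth related. You have not argued that a chain with incomparable $\tau$-invariants at every step exists.

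The standard route around this (as in \cite{MR2133266}) works with the \emph{right} descent set: one shows $x\preceq_L y\Rightarrow\tau(y^{-1})\subseteq\tau(x^{-1})$ and that right star operations preserve $\preceq_L$, using that left and right multiplication on $\Hn$ commute. Then $\tau_g(w^{-1})$ is constant on left cells and Theorem \ref{RSCtheorem} applies. This is close in spirit to what you intended but is a genuinely different mechanism than controlling the left $\tau$-invariants along the chain.

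The representation-theoretic paragraph is also incomplete: matching dimensions against $\bC[S_n]$ does not by itself force each $\operatorname{KL}_\CC$ to be irreducible of the correct type. Without Theorem \ref{leftcellunique} (so that all cells of a given shape contribute the same module) or a direct argument that $\pi_{\mathbf p}$ occurs in $\operatorname{KL}_\CC$, the counting does not close.
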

Now suppose that $Q$ and $Q'$ are $n$-box standard Young tableaux of the same shape, and that $Q$ and $Q'$ are the right tableaux corresponding to left cells $\CC_Q$ and $\CC_{Q'}$ respectively. Define a map $\phi_{Q,Q'}:\CC_Q\rightarrow\CC_{Q'}$ by
\[
(P,Q)\mapsto (P,Q').
\]
\begin{theorem}[Kazhdan--Lusztig, Bj\"{o}rner]\label{leftcellunique}
The map $\phi_{Q,Q'}$ is a well-defined isomorphism of left cell graphs.
\end{theorem}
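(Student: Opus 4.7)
The plan is to verify three ingredients in turn: well-definedness, bijectivity, and preservation of the weighted edges of the left cell graph. The first two are immediate from Theorem \ref{RSleftcell} and the Robinson--Schensted correspondence: for each $x \in \CC_Q$ the pair $(P(x), Q')$ is a valid same-shape pair of standard tableaux, hence determines a unique permutation $\phi_{Q,Q'}(x)$ whose right tableau is $Q'$, placing it in $\CC_{Q'}$; and $\phi_{Q',Q}$ is an evident two-sided inverse. The substantive content of the theorem is therefore the edge-multiplicity identity $\mu(x,y) = \mu(\phi_{Q,Q'}(x), \phi_{Q,Q'}(y))$ for all $x, y \in \CC_Q$.

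My strategy is to decompose $\phi_{Q,Q'}$ into elementary steps. Since $Q$ and $Q'$ have a common shape, Lemma \ref{tableautosuperstandardlemma} connects each to the column-superstandard tableau of that shape by a sequence of $\fyt$-moves; concatenating yields a chain $Q = Q_0, Q_1, \ldots, Q_m = Q'$ in which each $Q_{k+1}$ differs from $Q_k$ by a single application of some $\fyt_{i_k, j_k}$ or its inverse. Using the obvious composition identity $\phi_{Q_0, Q_m} = \phi_{Q_{m-1}, Q_m} \circ \cdots \circ \phi_{Q_0, Q_1}$, induction on $m$ reduces the claim to $m = 1$, so we may assume $Q' = \fyt_{i,j}(Q)$ for some pair of adjacent simple transpositions $s_i, s_j$.

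In this base case, define the right analogue $f^{R}_{i,j}(x) := \bigl(\fsn_{i,j}(x\inv)\bigr)\inv$. For $x \in \CC_Q$ we have $P(x\inv) = Q(x) = Q$ by Lemma \ref{lemma: Knuth results}, so $x\inv \in \dsn_{i,j}$; applying Corollary \ref{corollary: RS and the f maps} to $x\inv$ and then inverting shows that $f^{R}_{i,j}$ preserves the insertion tableau $P$ and transforms the recording tableau by $\fyt_{i,j}$. Consequently $\phi_{Q,Q'}(x) = f^{R}_{i,j}(x)$ for every $x \in \CC_Q$, and the remaining claim reduces to: whenever $x, y \in \CC_Q$ and $x' = f^{R}_{i,j}(x)$, $y' = f^{R}_{i,j}(y)$, we have $\mu(x,y) = \mu(x', y')$.

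This last identity is the main obstacle; it is Kazhdan--Lusztig's classical ``star operations preserve $\mu$'' theorem, in its right-handed form. The proof is a computation internal to $\Hn$: one applies the braid relation $\T{i}\T{j}\T{i} = \T{j}\T{i}\T{j}$ to $C_x$ from the right, expands both sides in the basis $\{C_w\}$ via the right-hand analogue of equation \eqref{klaction}, and compares the coefficients of the relevant $C_z$. The resulting linear identities, combined with the recursive definition of the \kl polynomials $P_{y,w}$ and hence of the $\mu$-coefficients, force the equality $\mu(x,y) = \mu(x',y')$ between any two $\mu$-linked elements and their star images, completing the induction.
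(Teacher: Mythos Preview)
The paper does not supply its own proof of Theorem \ref{leftcellunique}; it is quoted as a known result, attributed to Kazhdan--Lusztig and Bj\"{o}rner, with citations to \cite{MR560412}, \cite{MR937317}, and \cite[Theorems 6.5.1--3]{MR2133266} for published arguments. There is therefore no in-paper proof to compare your proposal against.

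That said, your outline is essentially the standard argument one finds in Bj\"{o}rner--Brenti: reduce to a single elementary step via a chain of $\fyt$-moves (your use of Lemma \ref{tableautosuperstandardlemma} is legitimate here, since the $\fyt$ maps are invertible), identify the induced map on permutations with a right-handed star operation via the Robinson--Schensted dictionary, and then invoke the Kazhdan--Lusztig theorem that star operations preserve $\mu$. Your reduction steps and the identification $\phi_{Q,Q'} = f^R_{i,j}$ in the base case are correct and well justified by the results already proved in the paper.

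The one weak point is your final paragraph. The assertion that star operations preserve $\mu$ is the genuine content of the theorem, and your sketch --- apply the braid relation to $C_x$ from the right, expand, and compare coefficients --- does not actually match how Kazhdan and Lusztig prove it. Their argument (see \cite[Theorem 4.2 and Corollary 4.3]{MR560412}) works directly with the recursive formula for the polynomials $P_{y,w}$ and establishes specific identities among them under the star operation; the equality of $\mu$-values falls out of a careful degree analysis, not from a Hecke-algebra braid computation. If you mean to cite this result, do so explicitly; if you mean to reprove it, the sketch as written would not get you there.
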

In particular, the map $\phi_{Q,Q'}$ preserves the edge-weights of the left cell graphs.

A priori, Theorem \ref{RSleftcell} left us with a collection of left cells that correspond to the same irreducible $S_n$-representation.  Theorem \ref{leftcellunique} says that these left cells are canonically isomorphic, and that their associated based representations are isomorphic; furthermore, the isomorphisms preserve left tableaux.  All the information in Equation \eqref{klcellaction} is preserved by the isomorphism $\phi_{Q,Q'}$, so we also refer to {\em the} \kl left cell basis for an irreducible $S_n$-representation  
without specifying a  left cell.

\section{An \emph{in situ} Robinson--Schensted Algorithm for Left Cells}\label{insitu}

Suppose that we are given an irreducible representation of $S_n$ 
with its associated \kl left cell basis but without the tableaux attached to each basis element. Is it possible to compute these tableaux directly by looking at the $S_n$-action on the basis elements? With an appropriate definition of the generalized $\tau$-invariant, the answer is yes.  We develop an appropriate definition in this section. The same definition will later allow us to reinterpret Khovanov--Kuperberg's map from webs to tableaux using the combinatorics of the symmetric group.

The ideas in this section follow from Vogan's work \cite{MR545215} in combination with the (now proven) \kl conjectures \cite{MR560412,MR632980,MR610137}, but we will proceed in a much more elementary fashion.

We recall the following fact.
\begin{lemma}\label{Snbruhatedgelemma} Let $x, s_i \in S_n$ with $s_i$ a simple transposition. Then 
$\mu(x,s_i x)=1$ in the \kl graph.
\end{lemma}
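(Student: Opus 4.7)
My plan is to verify the claim directly from the definition of $\mu$, breaking into the two cases $\ell(s_i x) = \ell(x)+1$ and $\ell(s_i x) = \ell(x)-1$. Since $\mu$ is symmetric ($\mu(y,w) = \mu(w,y)$), these cases are interchangeable, so I would without loss of generality take $\ell(s_i x) = \ell(x)+1$, write $y = x$ and $w = s_i x$, and then analyze $P_{y,w}$.

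With this reduction, the length gap satisfies $\ell(w) - \ell(y) = 1$, so the degree bound from the statement of the Kazhdan--Lusztig theorem becomes
\[
\deg(P_{y,w}) \leq \frac{\ell(w) - \ell(y) - 1}{2} = 0.
\]
Thus $P_{y,w}$ is a constant in $A$. The key ingredient is then the standard fact (an easy consequence of the KL recursion) that for any Bruhat-covering pair $y < w$ with $\ell(w) - \ell(y) = 1$, one has $P_{y,w} = 1$. Since $w = s_i y$ with $s_i y > y$, the pair $(y, w)$ is such a covering pair in Bruhat order, so $P_{y,w} = 1$. In particular $\deg(P_{y,w}) = 0$ meets the bound with leading coefficient $1$, so by definition $\mu(y,w) = 1$.

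Assembling the two cases, $\mu(x, s_i x) = 1$ regardless of whether $s_i$ raises or lowers the length of $x$. The only non-trivial input is the covering-pair identity $P_{y,w} = 1$; this is the single step I would expect to cite as a standard lemma from Kazhdan--Lusztig theory (or briefly verify via the recursion, using that no $z$ strictly between $y$ and $w$ exists when they differ by a covering relation). Since the paper already treats the Kazhdan--Lusztig framework as background material and cites Bj\"orner--Brenti, invoking this identity as a standard fact should be entirely appropriate, and no further calculation is needed.
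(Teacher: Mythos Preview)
Your proposal is correct and essentially follows the same route as the paper: reduce by symmetry of $\mu$ to the case where $s_i x > x$, then invoke a standard \kl fact to conclude $\mu=1$. The only difference is cosmetic: the paper cites Kazhdan--Lusztig's equation (2.3.a) directly for the statement that $\mu(s_i x, x)=1$ when $s_i\notin\tau(x)$, whereas you unpack this one step further, observing that the degree bound forces $P_{x,s_i x}$ to be constant and then appealing to the covering-pair identity $P_{y,w}=1$.
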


\begin{proof}
Kazhdan-Lusztig proved that if $s_i$ is not in $\tau(x)$ then $\mu(s_ix,x)=1$ \cite[p.~171, equation 2.3.a]{MR560412}.  The function $\mu(w,y)$ is symmetric so $\mu(x,s_i x)=1$ as well.  Recall that $\tau(x)$ consists of simple transpositions $s_j$ such that $s_j x<x$; thus, $s_i$ is in exactly one of $\tau(x)$ and $\tau(s_i x)$.  This proves the claim.
\end{proof}

An edge of the Kazhdan-Lusztig graph that connects vertices $x$ and $s_i x$ is often called a \emph{Bruhat edge} \cite[p.~176]{MR2133266}.  

The next theorem ties together ideas from several previous sections: first, each permutation $x$ in $\dsn_{i,j}$ has exactly one edge to $\dsn_{j,i}$ in the Kazhdan-Lusztig graph; second, that unique edge is to the  permutation $\fsn_{i,j}(x)$ and is labeled $1$.

\begin{theorem}\label{klexchange}
Let $x\in \dsn_{i,j}$. Then $x$ is connected to $y=\fsn_{i,j}(x)$ by an edge labeled $\mu(x,y)=1$.  Furthermore $x$ and $y$ lie in the same left cell and if $z\neq y$ is any other element of $\dsn_{j,i}$ then $\mu(x,z)=0$.
\end{theorem}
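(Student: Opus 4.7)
Three assertions need proof: (i) $\mu(x,y) = 1$; (ii) $x \sim_L y$; and (iii) $\mu(x,z) = 0$ for every $z \in \dsn_{j,i}$ with $z \neq y$.

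Assertion (i) is immediate from Lemma~\ref{Snbruhatedgelemma}: by Definition~\ref{definition: fsn_{i,j}}, $y = \fsn_{i,j}(x)$ is either $s_i x$ or $s_j x$, so $\mu(x,y) = 1$. Assertion (ii) follows from Definition~\ref{Definition: left cell}. The edge weight $\mu(x,y) = 1 \neq 0$ has just been established, and the descent conditions $x \in \dsn_{i,j}$ and $y \in \dsn_{j,i}$ give $s_i \in \tau(x) \setminus \tau(y)$ and $s_j \in \tau(y) \setminus \tau(x)$, so neither $\tau$-invariant is contained in the other; thus $x \preceq_L y$ and $y \preceq_L x$.

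Assertion (iii) is the substantive part. The plan is to apply the braid relation $\T{i}\T{j}\T{i} C_x = \T{j}\T{i}\T{j} C_x$ (valid because $s_i,s_j$ are adjacent) and to extract the coefficient of $C_x$ from each side after expanding using the \kl action formula~\eqref{klaction}. At each stage of the triple expansion, summations should be partitioned into four blocks according to the $\tau$-invariant behavior of each summand: those $w$ with $\{s_i, s_j\} \subseteq \tau(w)$, those in $\dsn_{i,j}$, those in $\dsn_{j,i}$, and those with $\{s_i, s_j\} \cap \tau(w) = \emptyset$. The descent conditions $s_i \in \tau(x)$ and $s_j \notin \tau(x)$ eliminate all possible contributions to the coefficient of $C_x$ except those arising when a second application of $\T{i}$ to some $C_w$ with $w \in \dsn_{j,i}$ produces $C_x$. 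Using the symmetry $\mu(w,x) = \mu(x,w)$, these surviving contributions collapse on both sides into expressions involving the single quantity $\sum_{w \in \dsn_{j,i}} \mu(x,w)^2$.

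Equating the two coefficients will yield the identity
\[
  \sum_{w \in \dsn_{j,i}} \mu(x,w)^2 = 1.
\]
Since each $\mu(x,w)$ is an integer, exactly one term equals $\pm 1$ and all others vanish. By (i) we know $\mu(x,y) = 1$, so the unique nonzero term is at $w = y$, and therefore $\mu(x,z) = 0$ for every other $z \in \dsn_{j,i}$. The main obstacle is purely bookkeeping: the triple expansion produces many summands of the form $\mu(x,w)\mu(w,u)$ and $\mu(x,w)\mu(w,u)\mu(u,w')$, and one must systematically rule out all contributions to $C_x$ except the targeted one by invoking the descent conditions satisfied by $x$ at each stage. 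No positivity of $\mu$ is required---integrality together with the sum-of-squares identity already forces the conclusion.
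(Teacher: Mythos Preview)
Your proposal is correct, and parts (i) and (ii) match the paper exactly. For part (iii) you take a genuinely different route.

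The paper dispatches (iii) by quoting a consequence of statement (2.3.e) from Kazhdan--Lusztig's original paper: if $s_k\notin\tau(a)$, $s_k\in\tau(b)$, $a<b$, and $\mu(a,b)\neq 0$, then $b=s_k a$. Applying this with $k=j$ (if $x<z$) or $k=i$ (if $z<x$) forces $z=y$ directly. This is very short but treats the key fact as a black box and uses Bruhat comparability of $x$ and $z$.

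Your approach instead exploits only the action formula \eqref{klaction} and the braid relation $\T{i}\T{j}\T{i}=\T{j}\T{i}\T{j}$, comparing the coefficient of $C_x$ on both sides. Carrying out the computation you sketch (using $\T{i}C_x=-C_x$ on the left and $\T{j}C_x=vC_x+\cdots$ on the right, then tracking which terms can return to $C_x$), one finds the coefficient of $C_x$ is $v-vS$ on the $\T{i}\T{j}\T{i}$ side and $-v^2+v^2S$ on the $\T{j}\T{i}\T{j}$ side, where $S=\sum_{w\in\dsn_{j,i}}\mu(x,w)^2$. Equating gives $(v+v^2)(1-S)=0$ in $A$, hence $S=1$, and integrality plus $\mu(x,y)=1$ finishes. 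This argument is more self-contained---it uses nothing beyond what the paper already states---and avoids invoking Bruhat comparability or the cited Kazhdan--Lusztig statement; the price is the bookkeeping you acknowledge. Either way the result follows without appealing to positivity of the $\mu$'s.
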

\begin{proof}
The function $\fsn_{i,j}(x)$ satisfies $y=s_i x$ or $s_j x$ by Definition \ref{definition: fsn_{i,j}}. Lemma \ref{Snbruhatedgelemma} then implies that $\mu(x,y)=1$. Since $\tau(x)\not\subset\tau(y)$ and $\tau(y)\not\subset\tau(x)$ we know $x$ and $y$ lie in the same left cell. 

Now suppose that $z\in \dsn_{j,i}$ with $\mu(x,z)\neq 0$. Statement (2.3.e) on page 171 of \cite{MR560412} immediately implies the following.
\begin{statement}Suppose $a, b\in S_n$ satisfy
both $s_k\notin \tau(a)$ and $s_k\in\tau(b)$. Then $b=s_k a$ if and only if $a<b$ and $\mu(a,b)\neq 0$. 
\end{statement}
Because $\mu(x,z)\neq 0$ either $x<z$ or $z<x$. If $x<z$ then $z= s_j  x=y$. If $z<x$ then $z=s_i  x=y$. 

\end{proof}

Let $\CC\subset S_n$ be an arbitrary left cell. As before,  the associated (complex) left cell representation is $\klc$ with basis $B(\klc)=\{C_w\vert w\in\CC\}$. We  want to define a $\tau$-invariant for each basis element $C_w$ intrinsically, using the $S_n$-action on $\klc$ of Equation \eqref{klcellaction}.  As with tableaux and permutations, we can define a set $\dkl_{i,j}$ for left-cell basis elements using this definition of $\tau$-invariants.
\begin{definition}
Let $C\in B(\klc)$ be a basis element for $\klc$. Define the $\tau$-invariant of $C$ to be
\[
\tau(C)=\{s_i\in S_n\vert \T{i} C=-C\}.
\]
Given adjacent simple transpositions $s_i, s_j$ let $\dkl_{i,j}$ be the set of basis elements $C$ in $\klc$ such that $s_i\in \tau(C)$ and $s_j\notin\tau(C)$.
\end{definition}

\begin{observation}\label{taubasispermutation}
By construction $\tau(C_w)=\tau(w)$ as desired.
\end{observation}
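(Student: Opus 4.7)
The plan is to read the observation directly off the explicit left-cell action formula \eqref{klcellaction}, together with the linear independence of the basis $\{C_y : y \in \CC\}$ of $\klc$. There is nothing deep here; this is why the authors label the result an \emph{observation} rather than a lemma.

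First I would check the easy inclusion $\tau(w) \subseteq \tau(C_w)$. Suppose $s_i \in \tau(w)$. Then the first case of Equation \eqref{klcellaction} applies and gives $\T{i} C_w = -C_w$ immediately, so by definition $s_i \in \tau(C_w)$.

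For the reverse inclusion $\tau(C_w) \subseteq \tau(w)$, I would argue by contraposition. Suppose $s_i \notin \tau(w)$. The second case of Equation \eqref{klcellaction} then gives
\[
\T{i} C_w \;=\; v\, C_w \;+\; v^{1/2} \sum_{\substack{y \in \CC \\ s_i \in \tau(y)}} \mu(w,y)\, C_y.
\]
Because $s_i \notin \tau(w)$, the index $w$ does not appear in the sum on the right. Passing to the complex left cell representation (evaluating at $v = 1$ and extending scalars) the coefficient of $C_w$ in $\T{i} C_w$ is $1$, while the coefficient of $C_w$ in $-C_w$ is $-1$. Since $\{C_y : y \in \CC\}$ is a basis, this forces $\T{i} C_w \ne -C_w$, so $s_i \notin \tau(C_w)$. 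Combining the two directions gives $\tau(C_w) = \tau(w)$.

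The only subtlety worth flagging is that $\tau(C_w)$ is defined using the $S_n$-action on the complex left cell representation, whereas Equation \eqref{klcellaction} is written over $A$; however, the specialization $v = 1$ is harmless for the coefficient-matching argument, since $1 \ne -1$ in $\bC$. There is no genuine obstacle to the proof.
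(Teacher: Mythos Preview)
Your proof is correct and is exactly the unpacking that the paper leaves implicit: the authors give no formal argument beyond ``by construction,'' and what you have written is precisely the two-line verification from Equation \eqref{klcellaction} and linear independence of the $C_y$.
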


We would like to define $\fkl_{i,j}$ intrinsically rather than relying on the definition of $\fsn_{i,j}$.  Theorem \ref{klexchange} does this: it says that if $C_w\in \dkl_{i,j}$ and $\T{j} C_w$ is written in terms of the left-cell basis of $\klc$, then exactly one of the summands in $\T{j} C_w$  is an element of $\dkl_{j,i}$.  With this definition of $\fkl_{i,j}$ we will immediately be able to construct generalized $\tau$-invariants by substituting $\fkl_{i,j}$ in Definition \ref{gentaudef}, as described in Remark \ref{remark: general gentaudef}.
\begin{definition}
Let $C\in \dkl_{i,j}$.  Define $\fkl_{i,j}(C)$ to be the unique basis element $C'\in \dkl_{j,i}$ that appears as a summand of $\T{j} C$.

\end{definition}
\begin{observation}\label{fbasispermutation}
Theorem \ref{klexchange}  showed that $\fkl_{i,j}(C_w)=C_{\fsn_{i,j}(w)}$. 
\end{observation}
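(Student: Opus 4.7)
The plan is to unpack the intrinsic definition of $\fkl_{i,j}$ in terms of the explicit formula for the Hecke action on the left cell basis, and then read off the claim directly from Theorem \ref{klexchange}.

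First I would observe that by Observation \ref{taubasispermutation} the condition $C_w \in \dkl_{i,j}$ is equivalent to $w \in \dsn_{i,j}$; in particular $s_j \notin \tau(w)$, so Equation \eqref{klcellaction} gives
\[
\T{j} C_w \;=\; v\, C_w \;+\; v^{1/2}\!\!\sum_{\substack{y \in \CC \\ s_j \in \tau(y)}}\!\! \mu(w,y)\, C_y.
\]
By definition, $\fkl_{i,j}(C_w)$ is the unique $C_y$ occurring on the right-hand side that lies in $\dkl_{j,i}$; that is, the unique $y$ in the cell $\CC$ with $\mu(w,y) \neq 0$, $s_j \in \tau(y)$, and $s_i \notin \tau(y)$. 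Note that $C_w$ itself is not a candidate since $s_j \notin \tau(w)$.

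Next I would apply Theorem \ref{klexchange} to $w \in \dsn_{i,j}$: it produces $y = \fsn_{i,j}(w) \in \dsn_{j,i}$, in the same left cell as $w$, with $\mu(w,y) = 1$, and asserts that no other element of $\dsn_{j,i}$ is connected to $w$ in the \kl graph. Translating the conditions $y \in \dsn_{j,i}$ via Observation \ref{taubasispermutation} back to the Kazhdan--Lusztig side yields exactly the conditions $s_j \in \tau(C_y)$ and $s_i \notin \tau(C_y)$. Thus $y = \fsn_{i,j}(w)$ is the unique index making $C_y$ belong to $\dkl_{j,i}$ and appear in $\T{j} C_w$, so $\fkl_{i,j}(C_w) = C_{\fsn_{i,j}(w)}$.

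There is essentially no obstacle here beyond bookkeeping: the content is already packaged in Theorem \ref{klexchange}, and the only subtlety is to verify that the summand $v C_w$ does not interfere (it doesn't, since $C_w \notin \dkl_{j,i}$) and that the sum ranges only over the cell $\CC$ (which is precisely the form of Equation \eqref{klcellaction}). The observation then follows immediately.
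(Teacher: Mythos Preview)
Your proposal is correct and is exactly the unpacking intended by the paper, which treats the observation as immediate from Theorem \ref{klexchange} and gives no separate argument. You have simply spelled out the bookkeeping (that $\dkl_{i,j}$ corresponds to $\dsn_{i,j}$ via Observation \ref{taubasispermutation}, and that the uniqueness in Theorem \ref{klexchange} identifies the unique $\dkl_{j,i}$-summand of $\T{j}C_w$), which is precisely the content behind the observation.
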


We constructed $\tau$-invariants and the maps $\fkl$ directly using intrinsic properties of the natural $S_n$-action on $\Hn$ and the \kl basis elements.  In fact, as the previous observations hint and the next theorem proves, this construction is completely consistent with the analogous definitions for permutations and for Young tableaux.    
\begin{theorem}[The Robinson--Schensted algorithm for Left Cell Representations]\label{theorem: RS for left cells} Let $\CC$ be a \kl left cell in $S_n$ and $C_w$ a basis element in $B(\klc)$. Then $P(w)$, the left Robinson--Schensted tableau for $w$, is the unique standard tableau on $n$ boxes such that \[\tau_g(P(w))=\tau_g(C_w).\]
\end{theorem}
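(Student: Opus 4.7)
The plan is to reduce this theorem to the results already established in the previous three sections, chaining the $\tau$-invariant equalities through the intermediate objects: \kl basis elements, permutations, and tableaux.

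First I would show that $\tau_g(C_w) = \tau_g(w)$ by a direct induction on the order of equivalence in Definition~\ref{gentaudef}. The base case ($\approx_0$) is exactly Observation~\ref{taubasispermutation}: the $\tau$-invariant of the basis element $C_w$ equals the $\tau$-invariant of the permutation $w$. For the inductive step, suppose $C_w \approx_{n-1} C_{w'}$ implies $w \approx_{n-1} w'$ in the appropriate sense, and $\fkl_{i,j}(C_w) \approx_{n-1} \fkl_{i,j}(C_{w'})$ translates to $\fsn_{i,j}(w) \approx_{n-1} \fsn_{i,j}(w')$. By Observation~\ref{fbasispermutation}, $\fkl_{i,j}(C_w) = C_{\fsn_{i,j}(w)}$, so the inductive hypothesis transports the equivalence from one setting to the other. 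In essence, the identifications $C_w \leftrightarrow w$ and $\fkl_{i,j} \leftrightarrow \fsn_{i,j}$ intertwine the two recursive definitions of generalized $\tau$-invariants.

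Next I would invoke Theorem~\ref{theorem: generalized tau invariants for perms and RS}, which states exactly that $\tau_g(w) = \tau_g(P(w))$. Combining this with the previous step yields
\[
\tau_g(C_w) \;=\; \tau_g(w) \;=\; \tau_g(P(w)),
\]
establishing that $P(w)$ is \emph{a} tableau on $n$ boxes with generalized $\tau$-invariant equal to $\tau_g(C_w)$.

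For uniqueness, suppose $Y \in \Tn$ satisfies $\tau_g(Y) = \tau_g(C_w)$. Then $\tau_g(Y) = \tau_g(P(w))$ by the displayed equation above, and Theorem~\ref{gentautabthm} (Vogan's theorem, proved in Section~\ref{tableausection}) forces $Y = P(w)$. This completes the argument. The only nonroutine step is the bookkeeping in the induction of the first paragraph, but since both Observations~\ref{taubasispermutation} and~\ref{fbasispermutation} were already noted, there is really no obstacle: the theorem is essentially a formal consequence of having verified that the $\tau$-invariant and the $f_{i,j}$-maps agree under the natural bijection $w \mapsto C_w$, together with the analogous agreement (via Robinson--Schensted) between permutations and their insertion tableaux.
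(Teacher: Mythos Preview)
Your proposal is correct and follows essentially the same approach as the paper: both arguments use Observations~\ref{taubasispermutation} and~\ref{fbasispermutation} to establish $\tau_g(C_w)=\tau_g(w)$, then appeal to the Robinson--Schensted/$\tau_g$ results of Section~\ref{snsection} for existence and to Vogan's Theorem~\ref{gentautabthm} for uniqueness. The only cosmetic differences are that the paper packages the first step as a commutative diagram rather than an explicit induction on the order of equivalence, and it cites Theorem~\ref{RSCtheorem} (which already bundles existence and uniqueness) in place of Theorems~\ref{theorem: generalized tau invariants for perms and RS} and~\ref{gentautabthm} separately.
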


\begin{proof} 
Let $\Phi: B(\klc) \longrightarrow S_n$ be the map that sends each $C_w\in B(\klc)$ to $w\in S_n$. Consider the following diagram:
\[\begin{picture}(230, 70)(-20,0)
\put(0,3){\vector(1,0){97}}
\put(-15,0){$S_n$}
\put(100,0){$S_n$}
\put(45,10){$\fsn$}

\put(-30,50){$B(\klc)$}
\put(85,50){$B(\klc)$}
\put(8,53){\vector(1,0){75}}
\put(45,60){$\fkl$}

\put(-10,43){\vector(0,-1){32}}
\put(-20,25){$\Phi$}
\put(105,43){\vector(0,-1){32}}
\put(108,25){$\Phi$}

\put(114,45){\vector(2,-1){34}}
\put(135,40){$\tau$}
\put(114,5){\vector(2,1){34}}
\put(135,5){$\tau$}
\put(154,23){subsets of $S_n$}
\end{picture}\]
Observation \ref{taubasispermutation} says that the maps in the triangle commute.  Observation \ref{fbasispermutation}  (or Theorem \ref{klexchange}) says that the maps in the square commute.  Using the definition of generalized $\tau$-invariants and the appropriate commutative diagram, we conclude that $\tau_g(C_w) = \tau_g(w)$ for each $C_w\in \CC$.  Theorem \ref{RSCtheorem} completes the proof. 
\end{proof}

In Theorem \ref{leftcellunique}, Kazhdan-Lusztig and Bj\"{o}rner gave an isomorphism between the left cell representations for any two tableaux of the same shape, an isomorphism
that restricts to a bijection between Kazhdan--Lusztig left cell bases.  Their result implies that all left cell bases for a given shape are equivalent and that the parameterization of these bases by left tableaux is canonical, independent of the underlying permutations. Theorem \ref{theorem: RS for left cells} enhances this result by allowing us to recover the left tableau for each basis element in $B(\klc)$ even if we ``forget" the original permutation. This canonical identification between basis elements and Young tableaux is at the heart of the results in the next sections.

\section{ The Braid Group Action on \texorpdfstring{$\la{sl}_3$}{sl3}-webs}\label{snaction}

In this section, we discuss fundamental properties of $\la{sl}_3$-webs, including the braid group action that webs carry and the basis of {\em reduced webs}.  As we will discuss in Section \ref{webandkl}, this braid group action on the space of webs comes from a Hecke algebra action that deforms the permutation action on tensor factors; in fact, the space of webs $W_{3n}$ discussed here is isomorphic to the $[n,n,n]$ \kl left cell module.  In the next sections, we will recall Khovanov--Kuperberg's bijection between reduced webs and standard tableaux and then show the main result of this paper: Khovanov--Kuperberg's bijection is analogous to the  constructions we gave in earlier sections of this paper and provides a natural analogue of the Robinson--Schensted algorithm for webs. In the last section, we will {\em dis}prove the natural conjecture that the reduced web basis is in fact the \kl basis.  

The $\la{sl}_3$ spider, introduced by Kuperberg~\cite{K} and subsequently studied by many others~\cite{KK, Kim, MR2710589, MOY}, is a diagrammatic, braided monoidal category encoding the representation theory of $\C U_{q}(\la{sl}_3)$.  Representation theoretically, the objects in the spider are tensor products of the two dual 3-dimensional irreducible representations of $\C U_q(\la{sl}_3)$, and the morphisms are intertwining maps. %~\cite{K}. 
Spider categories were first defined for simple Lie types $A_1$, $B_2$, and $G_2$ \cite{K} and then in other cases~\cite{FON, MR2710589, MOY}, but we use only $\la{sl}_3$ in this paper. 

\vspace{0in}

\begin{figure}[ht]
\includegraphics[width=1in]{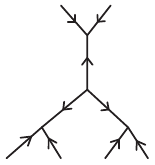}
\caption{A web in $\Hom(--{},{}++++)$}\label{webexample}
\end{figure}

Combinatorially, the objects in the $\la{sl}_3$ spider category are finite strings in the alphabet $\{+,-\}$, including the empty string. The morphisms are $\mathbb{Z}[q, q^{-1}]$-linear combinations of certain graphs called {\em webs}.  While one can write down an explicit formula for the intertwining map corresponding to a web (see for instance \cite{KK, MR2710589}), we will not use this here. 

\begin{definition} Webs are directed trivalent planar graphs with boundary
\begin{itemize}
\item  whose interior vertices are either sources or sinks and 
\item whose boundary vertices are incident to exactly one edge and are located at the top or bottom of a square region.  
\end{itemize}
\end{definition}

The sign $\pm$ of the boundary vertices at the top or bottom of the square region correspond to the domain and codomain respectively of the web. A vertex sign can be determined by the orientation of the edge at that vertex; edges pointing up yield a $+$ while edges pointing down yield a $-$. (Many papers place the domain on the bottom instead.) Figure \ref{webexample} gives an example of a web.

Local relations \ref{circlerel}, \ref{bigonrel}, and \ref{squarerel}  describe equivalences between webs, often called the circle, bigon, and square relations.  {\em Reduced webs} are those with no circles, squares, or bigons.  We follow Khovanov's normalization conventions for Relations \ref{circlerel} -- \ref{equation: smoothing}~\cite{MK}. 
\begin{equation} \label{circlerel}
\raisebox{-11pt}{\includegraphics[width=.4in]{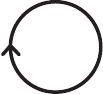}} = [3]_q = q^2 + 1 + q^{-2} 
 \end{equation}
 \begin{equation}\label{bigonrel}
 \raisebox{-20pt}{\includegraphics[height=.6in]{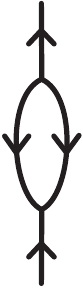}}= [2]_q \raisebox{-17pt}{\includegraphics[height=.5in]{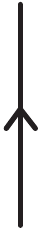}} = (q+q^{-1})  \raisebox{-17pt}{\includegraphics[height=.5in]{BigonR.pdf}}
 \end{equation}
 \begin{equation}\label{squarerel}
 \raisebox{-15pt}{\includegraphics[height=.45in]{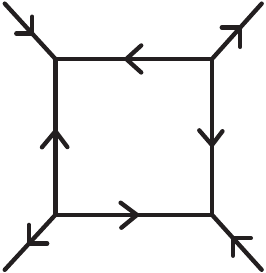}} = \raisebox{-15pt}{\includegraphics[height=.45in]{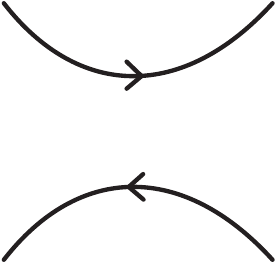}} +  \raisebox{-15pt}{\includegraphics[height=.45in]{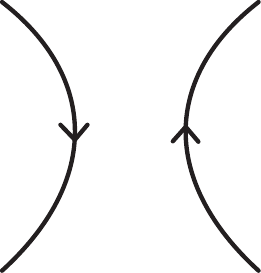}}
\end{equation}
Objects in the $\la{sl}_3$ spider come equipped with a braid group action that commutes with the $\C U_q(\la{sl}_3)$-action.  We often refer to the action of elements of the braid group as {\em braiding}.  Figure \ref{braidskeinrels} shows how braid crossings are locally interpreted in the $\la{sl}_3$ spider. The relations in Figure \ref{braidskeinrels} (and their $U_q(\la{sl}_n)$ counterparts) are often displayed in bracket notation as they are used to obtain knot invariants.   We describe a related Hecke algebra action in Section \ref{webandkl}.  
\begin{figure}[h]\label{braidingmorphisms}
\begin{equation}
\begin{tikzpicture}[baseline=.25cm, scale=.2]
\draw[style= thick, ->] (0,0)--(2,4);
\draw[style=thick,] (2,0) -- (1.25, 1.5);
\draw[style=thick, ->] (.75, 2.5) -- (0,4);
\node at (4, 2) {$= q^2$};
\draw [style=thick, ->] (6,0) to[out=45, in=270] (7,2);
\draw [style=thick] (7,2) to[out=90, in=110] (6,3.75);
\draw [style=thick, ->] (9,0) to[out=135, in=270] (8,2);
\draw [style=thick] (8,2) to[out=90, in=70] (9,3.75);
\node at (11, 2) {$- q^3$};
\draw [style = thick, ->] (13,0) -- (13.5, .5);
\draw [style = thick] (13.5,.5) -- (14, 1);
\draw [style = thick, ->] (15,0) -- (14.5, .5);
\draw [style = thick] (14.5,.5) -- (14, 1);
\draw[style=thick,](14, 1) -- (14, 2);
\draw[style=thick, <-](14, 2) -- (14, 3);
\draw [style = thick, ->] (13,4) -- (13.5, 3.5);
\draw [style = thick] (13.5,3.5) -- (14, 3);
\draw [style = thick, ->] (15,4) -- (14.5, 3.5);
\draw [style = thick] (14.5,3.5) -- (14, 3);

\end{tikzpicture}\label{poscrosseq}
\end{equation}

\begin{equation}\label{equation: smoothing}
\begin{tikzpicture}[baseline=.25cm, scale=.2]
\draw[style= thick, ->] (2,0)--(0,4);
\draw[style=thick,] (0,0) -- (.75, 1.5);
\draw[style=thick, ->] (1.25, 2.5) -- (2,4);
\node at (4.5, 2) {$= q^{-2}$};
\draw [style=thick, ->] (7,0) to[out=45, in=270] (8,2);
\draw [style=thick] (8,2) to[out=90, in=110] (7,3.75);
\draw [style=thick, ->] (10,0) to[out=135, in=270] (9,2);
\draw [style=thick] (9,2) to[out=90, in=70] (10,3.75);
\node at (12, 2) {$- q^{-3}$};
\draw [style = thick, ->] (14,0) -- (14.5, .5);
\draw [style = thick] (14.5,.5) -- (15, 1);
\draw [style = thick, ->] (16,0) -- (15.5, .5);
\draw [style = thick] (15.5,.5) -- (15, 1);
\draw[style=thick,](15, 1) -- (15, 2);
\draw[style=thick, <-](15, 2) -- (15, 3);
\draw [style = thick, ->] (14,4) -- (14.5, 3.5);
\draw [style = thick] (14.5,3.5) -- (15, 3);
\draw [style = thick, ->] (16,4) -- (15.5, 3.5);
\draw [style = thick] (15.5,3.5) -- (15, 3);

\end{tikzpicture}
\end{equation}
\caption{The braiding morphisms in the $\la{sl}_3$ spider}\label{braidskeinrels}
\end{figure}

We extend to complex coefficients in this paper.  Moreover, we take the classical limit, namely let $q=-1$.  This simplifies Relations \ref{circlerel} and \ref{bigonrel}, with $[3]_{q=-1} = 3$ and $[2]_{q=-1} = -2$.  The two braiding relations  in Figure \ref{braidskeinrels} become equal and each term on the right-hand side becomes $+1$.  For example, if $s_i$ is the positive crossing from $i$ to $i+1$ then a small calculation in this classical limit shows $s_i^2=I$.  We use this fact later to construct a natural $S_n$-action on the space of webs.

\begin{definition}
Let $W_{3n} = \Hom(\emptyset,{}+++ \cdots +++)$. Note that $W_{3n}$ is generated as a $\mathbb{Z}[q,q^{-1}]$-module by the webs with lower boundary a string of $3n$ +'s and upper boundary the empty word. Let $B_{3n} \subset \Hom(+++ \cdots +++{}, +++ \cdots +++{})$ be the subset consisting of all braiding morphisms with $3n$ strands. 
\end{definition}

The group $B_{3n}$ is in fact the braid group and acts on $W_{3n}$ by composition. Given $b\in B_{3n}$ and $w\in W_{3n}$ we denote the composition by $b\circ w\in W_{3n}$.  The symmetric group $S_{3n}$ is a quotient of the the braid group $B_{3n}$ by adding the relations $s_i^2=I$ for each $i$.  We observed that in the classical limit, these relations hold, so the $B_{3n}$-action on $W_{3n}$ descends to an $S_{3n}$-representation.  

\begin{remark}
We typically omit orientations in our graphs since they are uniquely determined by the fact that edges point away from the boundary.  For instance, we compute the $S_{3n}$ action using the diagram in Figure \ref{1.1}.
\end{remark}

Petersen--Pylyavskyy--Rhoades proved that the $S_{3n}$--representation $W_{3n}$ with basis consisting of reduced webs  is isomorphic to the irreducible $S_{3n}$-representation for the partition $[n,n,n]$ (with the correspondence in, e.g., Theorem \ref{thm:snrepresentations})~\cite[Lemma 4.2]{PPR}. One can also use Schur--Weyl duality to prove these representations are isomorphic \cite{MR0000255,MR1321638,MR841713} though classical Schur--Weyl duality does not consider the images of specific basis elements.  

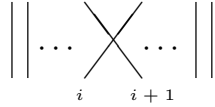
\begin{figure}[ht]
\scalebox{1.2}{\begin{picture}(55, 35) (-30,0)
\put(-23,5){\line(0,1){24}}
\put(-18,5){\line(0,1){24}}
\put(-15,12){$\cdots$}
\put(0,5){\line(3,4){18}}
\put(18,5){\line(-3,4){18}}
\put(18,12){$\cdots$}
\put(35,5){\line(0,1){24}}
\put(40,5){\line(0,1){24}}
\put(-3,-2){\Tiny{$i$}}
\put(14,-2){\Tiny{$i+1$}}
\end{picture}}
\caption{A diagram for $s_i$}\label{1.1}
\end{figure}

Figure \ref{permact} gives an example of a permutation acting on a web, in this case the permutation $\sigma = s_2 s_1\in S_6$.  
The permutation $\sigma$ has two crossings, each of which can be resolved in two different ways per Figure \ref{braidskeinrels}.   Thus the first equation in Figure \ref{permact} shows four web terms. The final lines in the calculation use the bigon and square relations and further simplify.  
\begin{figure}[ht]
\begin{picture}(200,170)(50,40)
\put(15,180){$D_{\sigma} =$}
\put(50,165){\includegraphics[height=.6in]{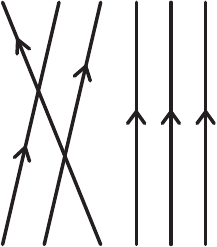}}
\put(160,180){$W =$}
\put(190,165){\includegraphics[height=.5in]{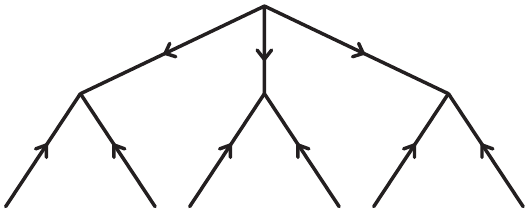}}
\put(50,130){$\sigma\cdot W  =$}
\put(110,110){\includegraphics[height=.6in]{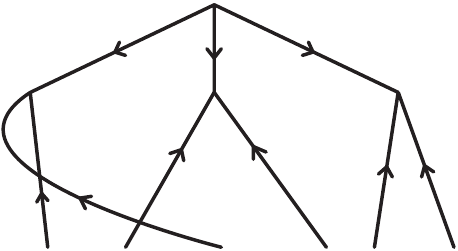}}
\put(200,130){$=$}
\put(-30,55){\ \raisebox{0in}{\includegraphics[height=.5in]{web1}} \raisebox{15pt}{$+$} \includegraphics[height=.5in]{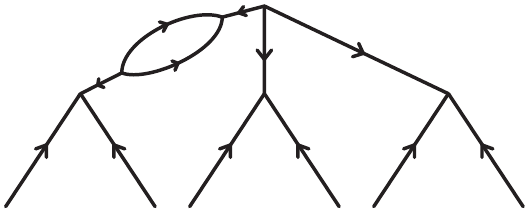} \raisebox{15pt}{$+$} \includegraphics[height=.5in]{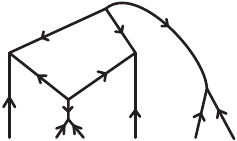} \raisebox{15pt}{$+$} \includegraphics[height=.5in]{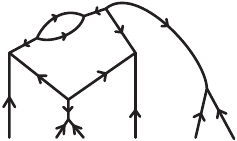}}
\end{picture}
\[=-\left(\raisebox{-15pt}{\includegraphics[height=.5in]{web1}}+\raisebox{-15pt}{\includegraphics[height=.5in]{web6}}\right)
\]
\[=-\left( \raisebox{-.25in}{\includegraphics[height=.5in]{web1}} + \raisebox{-.2in}{\includegraphics[height=.4in]{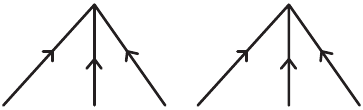}} + \raisebox{-.2in}{\includegraphics[height=.4in]{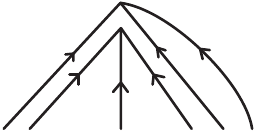}}\right)\]
\caption{The action of the permutation $s_2s_1$ on the web $W$.} \label{permact}
\end{figure}

\begin{remark}
In Kuperberg's work, webs are themselves the basis elements of the invariant subspace of certain ${U}_q(\la{sl}_3)$-representations.  By contrast, in our work the webs are a formal index set for a related basis of an $S_{3n}$-representation.   We will conflate the two in our notation, e.g. referring to specific webs as basis elements.
\end{remark}

\section{Khovanov--Kuperberg's bijection}\label{bijectionsection}
In this paper, we have introduced several different ways to parametrize the basis vectors of a representation: Young tableaux, permutations, and now webs.  We want a direct relationship between webs and tableaux to prove our main results.  Khovanov and Kuperberg introduced a bijection between reduced webs and dominant lattice paths in the weight lattice for $\la{sl}_3$ \cite{KK}. When all base vertices of the web are sources, we may interpret this as a bijection between reduced webs on $3n$ base vertices and $[n,n,n]$ standard Young tableaux \cite{PPR}.  This section describes Khovanov--Kuperberg's bijection in detail.

Khovanov--Kuperberg's map sends each web to a \emph{Yamanouchi word} which is then used to build a standard tableau. 

\begin{definition}
A Yamanouchi word is a string of symbols in the alphabet $\{ +, 0, -\}$ with the property that at any point in the word, the number of $+$'s to the left is greater than or equal to the number of $0$'s to the left which is greater than or equal to the number of $-$'s to the left. A Yamanouchi word is said to be balanced if it has $n$ of each symbol for some $n\in \mathbb{N}$.
\end{definition}

Figure \ref{depthcalculation} has an example of a balanced Yamanouchi word. Yamanouchi words can be defined on other ordered set of symbols. The following theorem connects balanced Yamanouchi words and standard tableaux \cite[p 68]{MR1464693}.

\begin{theorem}[Fulton]
Balanced Yamanouchi words in the alphabet $\{+, -, 0\}$ are in bijection with standard Young tableaux of shape $[n,n,n]$ via the following correspondence.

Let $T$ be a tableau $T$ of shape $[n,n,n]$.  The Yamanouchi word $y_T = y_1y_2\cdots y_{3n-1}y_{3n}$ is a string of symbols in the alphabet $\{+,0,-\}$ where 
\begin{displaymath}
  y_{i} = \left\{
     \begin{array}{ll}
       + & \textup{if $i$ is in the top row of $T$},\\
       0 &  \textup{if $i$ is in the middle row of $T$, and}\\
       - & \textup{if $i$ is in the bottom row of $T$.}\\
            \end{array}
   \right.
\end{displaymath} 
\end{theorem}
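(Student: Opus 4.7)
My plan is to prove the theorem by exhibiting an explicit inverse to the map $T \mapsto y_T$ and verifying that the forward and reverse constructions land in the correct target sets. Concretely, the inverse sends a balanced Yamanouchi word $w = y_1 y_2 \cdots y_{3n}$ to the tableau $T_w$ built by placing the entry $i$ in row $1$, $2$, or $3$ according to whether $y_i$ is $+$, $0$, or $-$, filling each row from left to right in the order the letters appear. I then need to check (a) that $y_T$ is a balanced Yamanouchi word whenever $T$ is standard of shape $[n,n,n]$, (b) that $T_w$ is a standard Young tableau of shape $[n,n,n]$ whenever $w$ is a balanced Yamanouchi word, and (c) that these two constructions are mutually inverse.

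For part (a), balance is immediate since the rows of $T$ each have length $n$. For the Yamanouchi condition, fix a prefix length $k$ and note that the number of occurrences of $+$, $0$, and $-$ in $y_1 \cdots y_k$ equals the number of entries $\leq k$ in rows $1$, $2$, $3$ of $T$ respectively. Since $T$ is column-strict, each entry in row $i+1$ sits directly below some strictly smaller entry in row $i$, so the count of entries $\leq k$ in row $i+1$ is at most the count in row $i$. This gives the Yamanouchi inequalities.

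For part (b), rows of $T_w$ are increasing by construction, since we insert labels from left to right in the order $1, 2, \ldots, 3n$. Balance of $w$ ensures each row has exactly $n$ boxes, so $T_w$ has shape $[n,n,n]$. For column strictness, note that the entry in row $r$ and column $c$ of $T_w$ is the position in $w$ of the $c$-th occurrence of the letter corresponding to row $r$. The Yamanouchi condition states precisely that for each $c$, the $c$-th $+$ precedes the $c$-th $0$, which precedes the $c$-th $-$; translated back, this says each column of $T_w$ reads strictly increasing top to bottom.

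Parts (a) and (b) together with the mutual inverse property (immediate from the constructions) finish the proof. The main obstacle, though modest, is the translation between the Yamanouchi inequality on prefixes and the column-strict condition on tableaux: one phrases prefix counts in terms of ``how many entries $\leq k$ sit in row $i$,'' while the other phrases column strictness in terms of ``the $c$-th entry of row $r$.'' The bijection between these two viewpoints is the content of the classical identity that column-strictness of a $3$-row rectangular standard tableau is equivalent to the Yamanouchi property of its row-reading word, and the argument amounts to making this translation careful.
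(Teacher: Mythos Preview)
Your argument is correct and complete. The paper does not actually prove this theorem; it simply states it and cites Fulton's \emph{Young tableaux} as the source, so there is no proof in the paper to compare against. Your proposal supplies the standard self-contained argument: the forward map is well-defined because column-strictness of $T$ gives an injection from entries $\leq k$ in row $i+1$ to entries $\leq k$ in row $i$ (hence the prefix inequalities), the inverse map is well-defined because the Yamanouchi inequality forces the $c$-th occurrence of $+$ to strictly precede the $c$-th occurrence of $0$, which strictly precedes the $c$-th occurrence of $-$ (hence column-strictness of $T_w$), and the two constructions are visibly inverse. This is exactly the elementary argument one finds in Fulton, so nothing further is needed.
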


Khovanov--Kuperberg's algorithm to build a Yamanouchi word from a reduced web on $3n$ source vertices follows:
\begin{itemize}
\item Draw the source vertices of $W$  on a horizontal line with $W$ in the upper half-plane.
\item 
The vertices and edges in the web divide the upper half-plane into faces, with one infinite face; label the infinite face 0.
\item
Label every other face in the upper half-plane with the minimum number of edges that a path must cross to reach this face from the infinite face.
\item Under each base vertex, write $+$,  $0$, or  $-$ depending on whether the labels on the faces directly above the vertex increase, stay the same or decrease reading from left to right.
\item The string under the horizontal line is a Yamanouchi word. Complete the algorithm by writing down $T$.
\end{itemize}
It is true but not obvious that the algorithm produces a Yamanouchi word corresponding to a standard $[n,n,n]$-tableau \cite{KK,PPR}; Figure \ref{depthcalculation} demonstrates the algorithm.  Section \ref{examples} provides more examples: Figure \ref{examplewebs} contains a list of webs while Figures \ref{exampletableaux1} and \ref{exampletableaux2} show the tableaux that correspond to those webs under Khovanov--Kuperberg's bijection.
\begin{figure}[ht]
\begin{tabular}{|c|c|}
\hline 
 \begin{picture}(144,50)
 \put(0,0)
 {\includegraphics[width=2in]{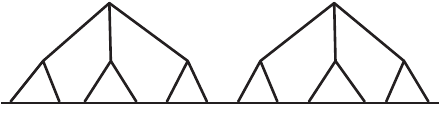}}
 \put(70,24){0}
 \put(97,21){1}
 \put(117,21){1}
 \put(43,21){1}
 \put(25,21){1}
 \put(59,10){1}
 \put(33,10){2}
 \put(10,10){1}
 \put(82.5,10){1}
 \put(107,10){2}
 \put(130,10){1}
 \put(137,0){$-$}
 \put(125,0){0}
 \put(114,0){$-$}
 \put(98,0){$+$}
 \put(88,0){$0$}
 \put(75,0){$+$}
 \put(64,0){$-$}
 \put(53,0){$0$}
 \put(40,0){$-$}
 \put(25,0){$+$}
 \put(17,0){$0$}
 \put(0,0){$+$}
 \end{picture}
 &\raisebox{.1in}{\young(1379,258\eleven,46\ten\twelve)}\\
 
  \hline  
\end{tabular}
\caption{A web, its Yamanouchi word, and its standard tableau.}
\label{depthcalculation}
\end{figure}

Khovanov and Kuperberg gave an algorithm to compute the reduced web corresponding to a dominant lattice path; sadly, it involves a complicated recursion \cite{KK}.  The third author  developed a simpler method to compute the web corresponding to a standard tableau using intermediate objects called \emph{$M$-diagrams} \cite{T}. 

\begin{definition}
Let $T$ be a standard tableau with three rows.  Construct the $M$-diagram $m_T$ corresponding to $T$ as follows:
\begin{itemize}
\item{Draw a horizontal line with $3n$ dots numbered $1, \ldots, 3n$ from left to right.  (This line forms the lower boundary for the diagram; all arcs lie above it.)}
\item{Start with the smallest number $j$ on the middle row.  Draw a semi-circular arc connecting $j$ to its nearest unoccupied neighbor $i$ to the left that appears in the top row. Continue until all numbers on the middle row have been used.}
\item{Start with the smallest number $k$ on the bottom row.  Draw a semi-circular arc connecting $k$ to its nearest neighbor $j$ to the left that appears in the middle row and does not already have an arc coming to it from the right. Continue until all numbers on the bottom row have been used. }
\end{itemize}
  The arcs $(i,j)$  are the {\em left arcs} and the arcs $(j,k)$ are the {\em right arcs}. 
  \end{definition}
  
The collection of left arcs is nonintersecting by construction, and similarly for the collection of right arcs.  However, left arcs can intersect right arcs. Figure \ref{mdia} shows an example of an $M$-diagram and its corresponding standard tableau.

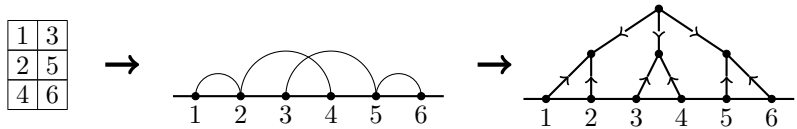
\begin{figure}[ht]
\begin{tikzpicture}[baseline=0cm, scale=.6]
\node at (0,0) {\young(13,25,46)};
\draw[style= ultra thick, ->] (1.5,0)--(2.25,0);
\draw[style=thick] (3,-.7)--(9,-.7);
\draw[radius=.08, fill=black](3.5,-.7)circle;
\draw[radius=.08, fill=black](4.5,-.7)circle;
\draw[radius=.08, fill=black](5.5,-.7)circle;
\draw[radius=.08, fill=black](6.5,-.7)circle;
\draw[radius=.08, fill=black](7.5,-.7)circle;
\draw[radius=.08, fill=black](8.5,-.7)circle;
\draw (4.5,-.7) arc (0:180: .5cm);
\draw (7.5,-.7) arc (0:180: 1cm);
\draw (6.5,-.7) arc (0:180: 1cm);
\draw (8.5,-.7) arc (0:180: .5cm);
\node at (3.5,-1.1) {1};
\node at (4.5,-1.1) {2};
\node at (5.5,-1.1) {3};
\node at (6.5,-1.1) {4};
\node at (7.5,-1.1) {5};
\node at (8.5,-1.1) {6};
\draw[style= ultra thick, ->] (9.75,0)--(10.5,0);
\end{tikzpicture} 
\raisebox{-13pt}{\begin{tikzpicture}[baseline=0cm, scale=0.6]
\draw[style=thick] (2.5,0)--(8.5,0);
\draw[radius=.08, fill=black](3,0)circle;
\draw[radius=.08, fill=black](4,0)circle;
\draw[radius=.08, fill=black](5,0)circle;
\draw[radius=.08, fill=black](6,0)circle;
\draw[radius=.08, fill=black](7,0)circle;
\draw[radius=.08, fill=black](8,0)circle;
\draw[style=thick,->](4, 0) -- (4,.5);
\draw[style=thick](4,.5)--(4,1);
\draw[radius=.08, fill=black](4,1)circle;
\draw[style=thick,->](3,0)--(3.5,.5);
\draw[style=thick](3.5,.5)--(4,1);
\draw[style=thick,->](7, 0) -- (7,.5);
\draw[style=thick](7,.5)--(7,1);
\draw[radius=.08, fill=black](7,1)circle;
\draw[style=thick,->](8,0)--(7.5,.5);
\draw[style=thick](7.5,.5)--(7,1);
\draw[radius=.08, fill=black](5.5,1)circle;
\draw[style=thick,-<](5.5,1)--(5.5,1.5);
\draw[style=thick](5.5,1.5)--(5.5,2);
\draw[radius=.08, fill=black](5.5,2)circle;
\draw[style=thick,->](5,0)--(5.25,.5);
\draw[style=thick](5.25,.5)--(5.5,1);
\draw[style=thick,->](6,0)--(5.75,.5);
\draw[style=thick](5.75,.5)--(5.5,1);
\draw[style=thick,-<](4,1)--(4.75,1.5);
\draw[style=thick](4.75,1.5)--(5.5,2);
\draw[style=thick,-<](7,1)--(6.25,1.5);
\draw[style=thick](6.25,1.5)--(5.5,2);
\node at (3,-.4) {1};
\node at (4,-.4) {2};
\node at (5,-.4) {3};
\node at (6,-.4) {4};
\node at (7,-.4) {5};
\node at (8,-.4) {6};
\end{tikzpicture}}
\caption{The $M$-diagram and web corresponding to a tableau.}\label{mdia}
\end{figure}

A straightforward sequence of resolutions transforms an $M$-diagram $m_T$ into a reduced web $W_T$. 
\begin{itemize}
\item{Replace a small neighborhood the middle boundary vertex of each $m$ with the `Y' shape  in Figure \ref{middle}.}
\item{Orient all arcs away from the boundary so  each boundary vertex is a source.}
\item{Replace each 4-valent intersection of a left arc and a right arc with the pair of trivalent vertices  in Figure \ref{replace4}. (There is a unique way to do this and preserve the orientation of incoming arcs.)}
\end{itemize}

\begin{figure}[h]
\begin{tikzpicture}[baseline=0cm, scale=.5]
\draw[style=thick] (0,0)--(3,0);
\draw[radius=.08, fill=black](1.5,0)circle;
\draw [style=thick](1.5,0) arc (0:100: 1.2cm);
\draw [style=thick] (1.5,0) arc (180:80: 1.2cm);

\draw[style=ultra thick, ->] (3.25,.5) -- (4,.5);

\draw[style=thick] (4.25,0)--(7.25,0);
\draw[radius=.08, fill=black](5.75,0)circle;
\draw[radius=.08, fill=black](5.75,1)circle;
\draw[style=thick,->](5.75,0) -- (5.75,.5);
\draw[style=thick](5.75,.5) -- (5.75,1);
\draw[style=thick, -<] (5.75,1) -- (5.25,1.1);
\draw[style=thick](5.25,1.1) -- (4.75,1.2);
\draw[style=thick, -<] (5.75,1) -- (6.25,1.1);
\draw[style=thick](6.25,1.1) -- (6.75,1.2);
\end{tikzpicture}
\caption{Modifying the middle vertex of an $m$.} \label{middle}
\end{figure}
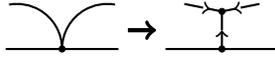

\begin{figure}[ht]
\begin{tikzpicture}[baseline=0cm, scale=0.4]
\draw[style=thick, ->] (-1,-1)--(-.5,-.5);
\draw[style=thick, ->] (-1,1)--(-.5,.5);
\draw[style=thick, ->] (-.5,.5)--(.5,-.5);
\draw[style=thick, ->] (-.5,-.5)--(.5,.5);
\draw[style=thick] (.5,.5)--(1,1);
\draw[style=thick] (.5,-.5)--(1,-1);
\draw[radius=.08, fill=black]circle;

\draw[style=ultra thick, ->] (1.5,0)--(2.5,0);

\draw[xshift=4cm, style=thick, ->] (-1,-1)--(-.5,-.5);
\draw[xshift=4cm, style=thick, ->] (-1,1)--(-.5,.5);
\draw[xshift=4cm, style=thick] (-.5,.5)--(0,0);
\draw[xshift=4cm, style=thick ] (-.5,-.5)--(0,0);
\draw[xshift=4cm, style=thick, -< ] (0,0)--(.5,0);
\draw[xshift=4cm, style=thick ] (.5,0)--(1,0);
\draw[xshift=4.5cm, style=thick,>-] (1,.5)--(1.5,1);
\draw[xshift=4.5cm, style=thick] (.5,0)--(1,.5);
\draw[xshift=4.5cm, style=thick] (.5,0)--(1,-.5);
\draw[xshift=4.5cm, style=thick, >-] (1,-.5)--(1.5,-1);
\draw[xshift=4cm, radius=.08, fill=black]circle;
\draw[xshift=5cm, radius=.08, fill=black]circle;
\end{tikzpicture}
\caption{Replacing a 4-valent vertex with trivalent vertices.}\label{replace4}
\end{figure}
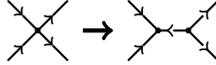

Figure \ref{mdia} shows the web corresponding to an $M$-diagram.  

The third author proved that the map from standard tableaux through $M$-diagram to webs is the inverse of Khovanov--Kuperberg's bijection \cite[Theorem 4.9]{T}.

\section{Khovanov--Kuperberg's bijection as Robinson--Schensted Analogue}\label{bijectionandtau}

The previous sections defined generalized $\tau$-invariants for various combinatorial objects associated to permutations (including Young tableaux and \kl basis elements) and then proved that natural bijections carrying these combinatorial objects to each other also preserve generalized $\tau$-invariants.  This section contains our main result: we define a generalized $\tau$-invariant for webs and prove that Khovanov--Kuperberg's bijection between Young diagrams and irreducible webs also preserves generalized $\tau$-invariants.  In this sense, we show that Khovanov--Kuperberg's bijection gives an analogue of the Robinson--Schensted correspondence.

We begin with a collection of computational lemmas to define generalized $\tau$-invariants for webs and to understand the action of the simple transposition $s_i$ on each irreducible web.  All webs in this section have $3n$ boundary vertices, each of which is a source.

\begin{definition}
Given an irreducible web $W$ the $\tau$-invariant $\tau(W)$ is defined to be the collection of simple reflections $s_i$ for which boundary vertices $i$ and $i+1$ are directly connected to the same internal vertex in $W$.
\end{definition}

Our first result is that this definition of $\tau$-invariants commutes with Khovanov--Kuperberg's bijection, in the sense that if $W_T$ is the web corresponding to the standard tableau $T$ then $\tau(W_T)=\tau(T)$.  

\begin{lemma} \label{tausetlemma}
Let $T$ be a standard tableau of shape $[n,n,n]$ and let $W_T$ be the web obtained from $T$ using Khovanov--Kuperberg's bijection.  Then $\tau(T)= \tau(W_T)$.
\end{lemma}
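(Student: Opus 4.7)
The plan is to work directly with Tymoczko's explicit inverse $T \mapsto m_T \mapsto W_T$ of the Khovanov--Kuperberg bijection, which constructs $W_T$ from the $M$-diagram $m_T$ by performing the Y-replacement at each middle-row vertex and then resolving every $4$-valent crossing of arcs into two trivalent vertices joined by a middle edge. The key observation is that each boundary vertex $k$ of $W_T$ has a unique \emph{direct neighbor}---the first internal vertex encountered along the edge emanating from $k$---so that $s_i \in \tau(W_T)$ is equivalent to boundaries $i$ and $i+1$ sharing this direct neighbor. Reading the direct neighbor off $m_T$ is easy: if $k$ is in the middle row then its direct neighbor is the Y-vertex $v_k$, and otherwise $k$ is an endpoint of a unique arc in $m_T$ whose direct neighbor is either the opposite Y-vertex (if the arc is uncrossed) or the sink trivalent vertex produced at the first $4$-valent crossing met along the arc.

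I would handle the forward direction by splitting on the rows of $i$ and $i+1$. If the rows are adjacent (row $1$--$2$ or row $2$--$3$), an easy induction in the $M$-diagram algorithm puts the short arc $(i,i+1)$ into $m_T$: each earlier middle- or bottom-row entry takes a candidate strictly smaller than its left neighbor, so $i$ is still unoccupied when $i+1$ is processed. Because this minimal arc between adjacent positions cannot be crossed by any other arc, the Y-replacement immediately makes $v_{i+1}$ or $v_i$ the shared direct neighbor. The genuinely subtle case is $i$ in row $1$ with $i+1$ in row $3$, where the left arc $(i,m)$ and the right arc $(m^*,i+1)$ satisfy $m \ge i+2$ and $m^* \le i-1$, hence interleave as $m^*<i<i+1<m$ and must cross. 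The crucial claim is that this crossing is the first met from both boundaries. Since right arcs are mutually non-crossing in an $M$-diagram, the right arcs that cross $(i,m)$ and contain position $i$ form a nested chain; a direct calculation with the $x$-coordinate of the intersection of two upper semicircles based on the $x$-axis shows that deeper nesting produces a smaller $x$-coordinate on $(i,m)$, so the innermost arc in the chain supplies the crossing closest to boundary $i$. The arc $(m^*,i+1)$ is innermost because $i+1$ is the smallest possible bottom-row index exceeding $i$; a symmetric argument on left arcs containing position $i+1$ identifies $(i,m)$ as innermost for the arc from $i+1$, using that $i$ is the largest top-row index less than $i+1$. The resolution of this shared crossing produces a sink incident to both boundaries, giving $s_i \in \tau(W_T)$.

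For the reverse direction I would check the remaining row configurations. If $i$ and $i+1$ lie in a common row then their arcs are of the same type and therefore pairwise non-crossing, so any crossings along the two arcs produce distinct sinks and hence distinct direct neighbors; in row $2$ the direct neighbors are the distinct Y-vertices $v_i$ and $v_{i+1}$. If $i+1$ lies strictly above $i$, then in each sub-case either the arcs from $i$ and $i+1$ are disjoint or one endpoint contributes a Y-vertex $v_k$ which cannot coincide with any $v_\ell$ for $\ell\ne k$ or with a crossing sink. The main obstacle is the ``innermost arc gives the first crossing'' assertion in the row $1$--row $3$ case, which requires carefully combining the non-crossing structure of like-typed arcs with the explicit computation of where nested right arcs intersect $(i,m)$; the remaining verifications are routine.
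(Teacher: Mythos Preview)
Your proposal is correct and the forward direction follows essentially the same three-case split on the rows of $i$ and $i+1$ as the paper, via the $M$-diagram construction. Your treatment of the row~1/row~3 case is actually more thorough than the paper's: you explicitly verify that the crossing of $(i,m)$ and $(m^*,i+1)$ is the \emph{first} crossing encountered from each boundary, using the nested structure of like-typed arcs and a monotonicity argument for intersection points. The paper handles this step by observing that no boundary vertices lie between $i$ and $i+1$ and appealing to a figure.

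The reverse direction is where your approach genuinely diverges. The paper does not stay inside the $M$-diagram picture at all; instead it uses the other half of Khovanov--Kuperberg's bijection, namely the depth labeling of faces and the resulting Yamanouchi word. If $i$ and $i+1$ share an internal vertex, the three faces adjacent to that vertex have depths $d_1,d_2,d_3$ constrained so that the Yamanouchi subword $y_iy_{i+1}$ must be one of $+0$, $+-$, or $0-$, each of which forces $i$ into a higher row than $i+1$. This is a three-line argument covering all cases at once. Your route---exhausting the remaining row configurations (same row; $i+1$ above $i$) and arguing in each that the direct neighbors of $i$ and $i+1$ are distinct---also works, but trades the use of the depth map for a longer case analysis. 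The paper's approach buys brevity by exploiting both directions of the bijection; yours buys self-containment, since it never leaves the $M$-diagram/resolution description.
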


\begin{proof}
Recall that $\tau(T)$ is the set of all simple transpositions $s_i$ for which $i$ is in a row above $i+1$ in the tableau $T$.  Suppose that $s_i\in \tau(T)$. Since $T$ has three rows, there are three possibilities:
\begin{enumerate}
\item{$i$ is in the top row, and $i+1$ is in the middle row.}
\item{$i$ is in the middle row, and $i+1$ is in the bottom row.}
\item{$i$ is in the top row, and $i+1$ is in the bottom row.}
\end{enumerate}

Consider the $M$-diagram $M_T$ of $T$.  In the first case, the boundary vertices $i$ and $i+1$ must be connected by the left arc of an $M$. Otherwise, two left arcs cross in $M_T$, which cannot happen. Since $i$ and $i+1$ are adjacent and connected by the arc of an $M$, they will be connected to the same internal vertex in $W_T$ as in Figure \ref{middle}.

The second case is similar, with $i$ and $i+1$ now connected by the right arc of an $M$ but still connected to the same internal vertex in $W_T$  as in Figure \ref{middle}.

In the third case, the vertex $i$ is at the far left of an $M$ and $i+1$ is at the far right of an $M$. No boundary vertices lie between $i$ and $i+1$ so the two arcs must cross exactly as in Figure \ref{tauproof1}. Figure \ref{tauproof1} also resolves this crossing, showing that vertices $i$ and $i+1$ connect to the same internal vertex in $W_T$.  

\begin{figure}[ht]
\scalebox{1.5}{\begin{picture}(100,35)(50,0)
\put(0,0){\includegraphics[width=1.25in]{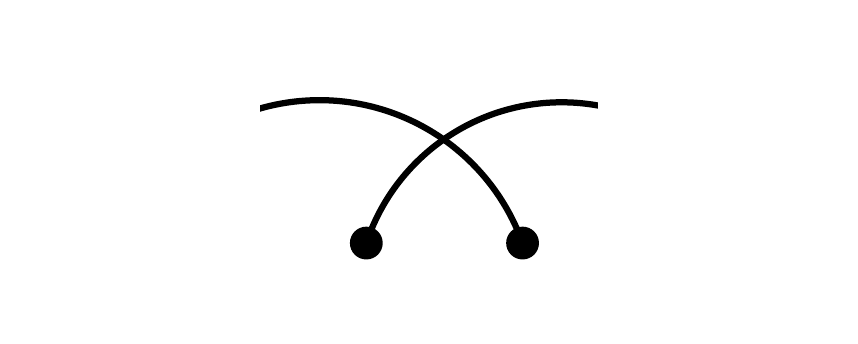}}
\put(85,10){\scalebox{1.5}{$\leadsto$}}
\put(110,0){\includegraphics[width=1.25in]{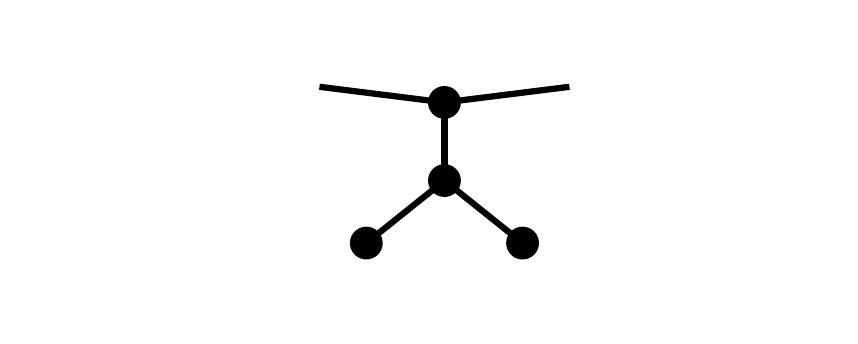}}
\put(35,0){\tiny{$i$}}
\put(50,0){\tiny{$i+1$}}
\put(145,0){\tiny{$i$}}
\put(160,0){\tiny{$i+1$}}
\end{picture}}
\caption{Case (3): Vertices $i$ and $i+1$ in an $M$-diagram and web.}\label{tauproof1}
\end{figure}

Conversely, suppose $i$ and $i+1$ connect to the same internal vertex in $W_T$.  Consider the possible depths $d_1, d_2$ and $d_3$ of adjacent faces  and the subwords of the Yamanouchi word these depths determine. Depths of faces sharing an edge differ by at most one. Figure \ref{tauproof2} lists the three possibilities for $d_1$ relative to $d_2$ and the consequences for the Yamanouchi word:
\begin{enumerate}
\item{If $d_2=d_1+1$ then $d_3 = d_2$, and $y_iy_{i+1} = +0$.}
\item{If $d_2=d_1$ then $ d_3 = d_1+1$, and $y_iy_{i+1}=+-$.}
\item{If $d_2=d_1-1$ then $d_3 = d_1$, and $y_iy_{i+1} = 0-$.}
\end{enumerate}

\begin{figure}[ht]
\scalebox{1.5}{\begin{picture}(50, 30)(25,5)
\put(37,21){\tiny{{\bf$d_1$}}}
\put(50,21){\tiny{\bf{$d_2$}}}
\put(0,0){\includegraphics[width=1.25in]{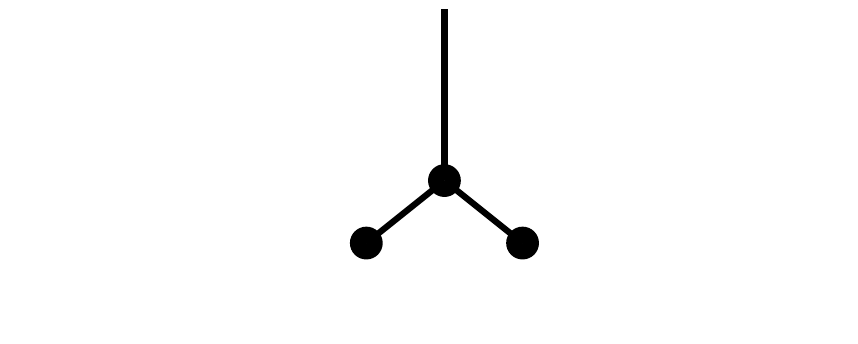}}
\put(35,0){\tiny{$i$}}
\put(50,0){\tiny{$i+1$}}
\put(43,8){\tiny{$d_3$}}

\end{picture}}
\caption{Face depths near vertices $i$ and $i+1$.}\label{tauproof2}
\end{figure}

The letters in a Yamanouchi word tell us the rows of $T$ that contain $i$ and $i+1$:  $+$ indicates the top row, $0$ the middle row, and $-$ the bottom row. In all three cases $i$ is in a higher row than $i+1$ so $s_i \in \tau(T)$.  This proves the claim. 
\end{proof}

Manipulating webs diagrammatically often gives us unreduced webs.  In the next lemma, we study the effects on the $\tau$-invariant and prove that reducing a web cannot remove transpositions from the $\tau$-invariant.

In upcoming proofs, we refer to unbounded faces of webs. A face is considered unbounded if it touches the boundary of the web and thus is not completely enclosed by edges of the web.

\begin{lemma} \label{persistence} 
If $W$ is an unreduced web and $s_i\in \tau(W)$ then $s_i\in \tau(W')$ for every reduced web $W'$ that appears as a summand in the reduction of $W$.
\end{lemma}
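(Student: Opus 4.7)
The plan is as follows. The hypothesis $s_i \in \tau(W)$ means that boundary vertices $i$ and $i+1$ share a common internal neighbor $v$, so $W$ contains a ``Y'' consisting of $v$ with two downward edges to $i$ and $i+1$ and a single remaining edge $v\text{-}u$ leading up into the rest of the web. I want to show this Y survives every reduction, so that every reduced summand $W'$ still has $i$ and $i+1$ meeting at a common internal vertex.

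The key local observation is that $v$ can never be a vertex of a bigon or square face, nor can the edges $v\text{-}i$ and $v\text{-}(i+1)$ ever lie on the boundary of any bigon, square, or circle face. Indeed, for a trivalent vertex to lie on a bigon or square, two of its three incident edges must belong to the bounding cycle of that face. The edges $v\text{-}i$ and $v\text{-}(i+1)$ terminate at degree-one boundary vertices and hence cannot belong to any interior cycle at all. Thus $v$ has only one edge, $v\text{-}u$, that could possibly lie on such a cycle, and one edge is not enough to make $v$ a vertex of a bigon (which requires two parallel edges between its two vertices) or of a square (which requires each of its four vertices to contribute two of the cycle's edges). Circles are disjoint connected components and obviously involve neither $v$ nor its edges to the boundary.

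With this in hand, I would conclude by induction on the number of reduction steps performed to produce $W'$. Each reduction (Relations \ref{circlerel}, \ref{bigonrel}, \ref{squarerel}) is strictly local: it modifies only the bigon, square, or circle being reduced together with the immediately adjacent external edges, and leaves the rest of the web untouched. Because of the preceding observation, the vertex $v$ and its pair of downward edges to $i$ and $i+1$ are never among the modified features. The upper edge $v\text{-}u$ may have its far endpoint reattached when a reduction occurs at or near $u$, but $v$ itself, together with its two edges to $i$ and $i+1$, is preserved. Hence $v$ remains a common internal neighbor of $i$ and $i+1$ in every reduced summand $W'$, so $s_i \in \tau(W')$. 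The only point that requires care is to be sure that no cascade of reductions can indirectly destroy the Y at $v$; but since the boundary is never touched by any reduction, $i$ and $i+1$ stay leaves throughout, and the leaf-edge argument of the previous paragraph applies uniformly at every stage of the reduction process.
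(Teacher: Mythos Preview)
Your argument is correct and is essentially the same as the paper's, though you have unpacked the reasoning more explicitly. The paper's proof is a single observation: the internal vertex $v$ joining $i$ and $i+1$ lies only on \emph{unbounded} faces of $W$, so no circle, bigon, or square relation can ever involve $v$. Your edge-counting argument (that two of $v$'s three edges terminate at leaves and hence cannot lie on any internal cycle) is precisely the combinatorial content of that topological statement, and your induction on reduction steps makes explicit what the paper leaves to the reader.
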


\begin{proof}
If $i$ and $i+1$ are connected by an internal vertex $v$ then $v$ only lies on unbounded faces of $W$.   Thus $v$ persists as an internal vertex in all terms of the reduction of $W$. Thus $s_i$ is in the $\tau$-invariant of each of these webs.
\end{proof}

We want to examine the action of simple transpositions on webs more closely in order to build a notion of generalized $\tau$-invariants for webs.  
The next formula identifies $s_i \cdot W$ when $s_i \in \tau(W)$.  The reader should note its similarity to the formula for the action of $T_{s_i}$ on the \kl basis.

\begin{lemma} \label{taurulelemma}
$s_i\cdot W = -W$ if and only if $s_i\in \tau(W)$.
\end{lemma}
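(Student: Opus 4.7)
The plan is to compute $s_i \cdot W$ diagrammatically, using the braiding skein relations in Figure \ref{braidskeinrels} at the specialization $q = -1$. In this limit both relations collapse to the same identity: a crossing resolves as the identity smoothing plus the H-smoothing, each with coefficient $+1$. Writing $H_i(W)$ for the (generally unreduced) web obtained from $W$ by attaching an H-gadget between boundary strands $i$ and $i+1$, this gives
\[
s_i \cdot W \;=\; W + H_i(W).
\]

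For the forward implication, assume $s_i \in \tau(W)$, so boundary vertices $i$ and $i+1$ share a common internal neighbor $v$. After attaching the H-gadget, the two edges from $v$ to the boundary together with the two lower edges of the H form a bigon joining $v$ to the lower trivalent vertex of the H. Applying the bigon relation \eqref{bigonrel} at $q = -1$, where $[2]_{q=-1} = -2$, collapses this bigon and removes both of its vertices. What remains is the other neighbor of $v$ in $W$ now connected by a single edge to the upper trivalent vertex of the H, which in turn branches to boundary vertices $i$ and $i+1$. This is precisely $W$, with the upper vertex of the H playing the role of $v$, so $H_i(W) = -2W$ and hence $s_i \cdot W = W - 2W = -W$.

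For the converse, suppose $s_i \notin \tau(W)$. The two boundary vertices $i$ and $i+1$ in $H_i(W)$ are directly joined to the lower trivalent vertex of the inserted H-gadget, so $s_i \in \tau(H_i(W))$. By Lemma \ref{persistence}, every reduced web $W'$ occurring in the expansion of $H_i(W)$ in the reduced web basis satisfies $s_i \in \tau(W')$. Since $s_i \notin \tau(W)$, the reduced web $W$ does not appear in this expansion, and the coefficient of $W$ in $H_i(W)$ is zero. Consequently the coefficient of $W$ in $s_i \cdot W = W + H_i(W)$ equals $+1$, whereas the coefficient of $W$ in $-W$ is $-1$. Linear independence of the reduced web basis then forces $s_i \cdot W \neq -W$.

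The only real subtlety is diagrammatic bookkeeping: one must verify in the forward direction that, after collapsing the bigon produced by attaching the H-gadget to the Y at $i, i+1$, the remaining web is genuinely equal to $W$ rather than merely isomorphic to it as a labeled graph. This is a local check with no deeper combinatorial content, and is the only concrete calculation that needs to be spelled out carefully.
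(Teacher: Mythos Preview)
Your proof is correct and follows essentially the same approach as the paper: both directions use the decomposition $s_i\cdot W = W + H_i(W)$, reduce the bigon to get $-2W$ in the forward direction, and invoke Lemma~\ref{persistence} for the converse. The only cosmetic difference is that the paper packages the converse step as a forward reference to Lemma~\ref{crossingactioncor}, whereas you prove that content inline; your version is slightly more self-contained but otherwise identical.
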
 

\begin{proof}
First let $s_i\in \tau(W)$. Calculate $s_i\cdot W$ using the diagram for $s_i$ found in Figure \ref{1.1} and then the bigon  rule in Equation \eqref{bigonrel}. Figure \ref{taucalc} has this computation, and shows  $s_i\cdot W = W-2W = -W$. 
\begin{figure}[ht]
$ \raisebox{-.2in}{\includegraphics[width=.2in]{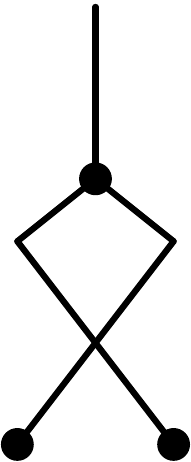}}  = \raisebox{-.2in}{\includegraphics[width=.2in]{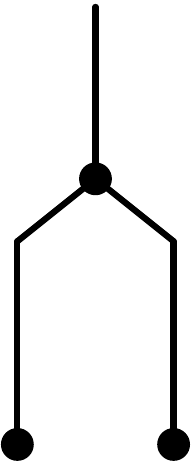}} +\raisebox{-.2in}{ \includegraphics[width=.2in]{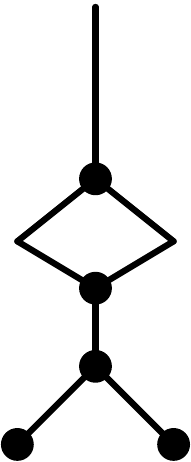}}  = \raisebox{-.2in}{\includegraphics[width=.2in]{web14}} -2\raisebox{-.2in}{ \includegraphics[width=.2in]{web14}} = -\raisebox{-.2in}{ \includegraphics[width=.2in]{web14}}  $
\caption{Calculating $s_i\cdot W$ when $s_i\in \tau(W)$}\label{taucalc}
\end{figure}

Now suppose $s_i\notin\tau(W)$.  Lemma \ref{crossingactioncor} shows $s_i\cdot W =W + W'$ where $W'$ is a possibly unreduced web in which $i$ and $i+1$ are connected by an internal vertex. By Lemma \ref{persistence}, the web $W$ is not a term in the reduction of $W'$ so  $s_i\cdot W \neq -W$.
\end{proof}

We now follow with some more reductions on $s_i \cdot W$.  This is especially useful when $s_i$ is not in $\tau(W)$, in which case we find that $s_i \cdot W$ has at least two summands: $W$ and a web with $s_i$ in its $\tau$-invariant.

\begin{lemma}\label{crossingactioncor}
Let $W$ be a (possibly unreduced) web.  Then $s_i\cdot W = W+W'$ where $W'$ is possibly unreduced and $s_i$ is an element of the $\tau$-invariant of every web in the reduction of $W'$. If $s_i\notin \tau(W)$ then $W$ always appears with coefficient $1$ in $s_i \cdot W$.
\end{lemma}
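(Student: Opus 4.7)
The plan is to compute $s_i \cdot W$ directly from the diagrammatic recipe of Section \ref{snaction}. The diagram for $s_i$ in Figure \ref{1.1} is the identity braid except for a single crossing of the strands at positions $i$ and $i+1$. Stacking this diagram on top of $W$ therefore produces a picture with exactly one crossing, which I resolve using the braiding skein relation from Equation \eqref{poscrosseq}. Specializing to the classical limit $q=-1$ gives coefficients $q^2 = 1$ and $-q^3 = 1$, so the crossing becomes the sum of the ``identity'' smoothing (which reproduces $W$) and the ``H''-smoothing, which glues an H-shape to the top of $W$ at positions $i$ and $i+1$. Writing $W'$ for the resulting (possibly unreduced) web, this yields $s_i \cdot W = W + W'$.

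Next I verify the $\tau$-invariant statement. By construction, the upper boundary vertices $i$ and $i+1$ of $W'$ are both incident to a common internal trivalent vertex, namely the top vertex of the attached H. Hence $s_i \in \tau(W')$. Lemma \ref{persistence} then shows that $s_i$ remains in the $\tau$-invariant of every reduced web appearing in the reduction of $W'$, proving the first assertion of the lemma.

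For the second assertion, assume $s_i \notin \tau(W)$. Any reduced summand $W''$ appearing in the reduction of $W'$ satisfies $s_i \in \tau(W'')$ by the previous paragraph, so $W'' \neq W$. Consequently nothing in the reduction of $W'$ contributes to the coefficient of $W$ in the expression $s_i \cdot W = W + W'$, and so $W$ appears with coefficient exactly $1$.

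The only delicate point I foresee is that the newly attached H-vertex could, in principle, be destroyed when reducing $W'$ via the circle, bigon, or square relations; but Lemma \ref{persistence} is designed precisely to rule this out, since the internal trivalent vertex sitting between boundary vertices $i$ and $i+1$ touches only unbounded faces and therefore survives any local reduction. With that lemma in hand, the proof is essentially an unpacking of the skein relation plus bookkeeping of $\tau$-invariants.
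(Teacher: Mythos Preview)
Your argument is correct and follows essentially the same route as the paper: resolve the single crossing in $s_i\cdot W$ via the skein relation at $q=-1$ to obtain $W+W'$, observe that the new internal trivalent vertex of $W'$ joins boundary vertices $i$ and $i+1$, and invoke Lemma~\ref{persistence}. Your explicit treatment of the second assertion (that no reduced summand of $W'$ can equal $W$ when $s_i\notin\tau(W)$) makes a step visible that the paper leaves implicit, but otherwise the two proofs coincide.
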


\begin{proof}
The diagram for $s_i$ in Figure \ref{1.1} has one crossing, so reduce $s_i\cdot W$ by applying the  relations in Figure \ref{braidskeinrels} (with $q=-1$) to the  four-valent vertex in $s_i \cdot W$, as in Figure \ref{figure: figure A}.  This gives a sum of two webs depending on how we smooth the four-valent vertex, also shown in Figure \ref{figure: figure A}. Smoothing into two strands recovers the original web $W$. The other smoothing $W'$ introduces two additional trivalent vertices, one of which joins boundary vertices $i$ and $i+1$. Lemma \ref{persistence} completes the proof.
\end{proof}

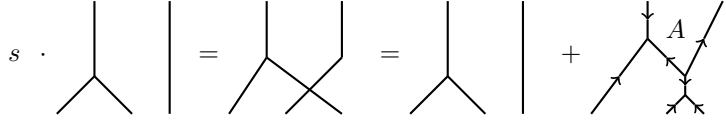
\begin{figure}[h]
\raisebox{20pt}{$s \hspace{.1in} \cdot$} \begin{tikzpicture} [baseline=0cm, scale=0.5]
\draw[style=thick] (-1,0)--(0,1);
\draw[style=thick](0,1)--(1,0);
\draw[style=thick](0,1)--(0,3);
\draw[style=thick] (2,0)--(2,3);
\end{tikzpicture} \hspace{.1in}\raisebox{20pt}{$=$} 
\begin{tikzpicture} [baseline=0cm, scale=0.5]
\draw[style=thick] (-1,0)--(0,1.5);
\draw[style=thick](0,1.5)--(2,0);
\draw[style=thick](0,1.5)--(0,3);
\draw[style=thick] (2,1.5)--(2,3);
\draw[style=thick](2,1.5)--(.5,0);
\end{tikzpicture} \hspace{.1in} \raisebox{20pt}{$=$}
\begin{tikzpicture} [baseline=0cm, scale=0.5]
\draw[style=thick] (-1,0)--(0,1);
\draw[style=thick](0,1)--(1,0);
\draw[style=thick](0,1)--(0,3);
\draw[style=thick] (2,0)--(2,3);
\end{tikzpicture} \hspace{.1in} \raisebox{20pt}{$+$}
\begin{tikzpicture} [baseline=0cm, scale=0.5]
\draw[style=thick, ->] (-1,0)--(-.25,1);
\draw[style=thick](-.25,1)--(.5,2);
\draw[style=thick, ->](.5,3)--(.5,2.5);
\draw[style=thick](.5,2.5)--(.5,2);
\draw[style=thick, ->](1.5,1)--(1,1.5);
\draw[style=thick](1,1.5)--(.5,2);
\draw[style=thick, ->](1.5,1)--(1.5,.75);
\draw[style=thick](1.5,.75)--(1.5,.5);
\draw[style=thick, ->](1,0)--(1.25,.25);
\draw[style=thick](1.25,.25)--(1.5,.5);
\draw[style=thick, ->](2,0)--(1.75,.25);
\draw[style=thick](1.75,.25)--(1.5,.5);
\draw[style=thick, ->](1.5,1)--(2,2);
\draw[style=thick](2,2)--(2.5,3);
\node at (1.25,2.25) {$A$};
\end{tikzpicture}
\caption{Local schematic for how the simple reflection $s$ acts on $W$}\label{figure: figure A}
\end{figure}

As before, we define  sets $\dw_{i,j}$.  

\begin{definition}
Let $s_i$ and $s_j$ be adjacent simple transpositions.  The set $\dw_{i,j}$ consists of all reduced webs $W$ for which $s_i \in \tau(W)$ and $s_j \not \in \tau(W)$.
\end{definition}

The key to defining generalized $\tau$-invariants for webs is an appropriate map $\fw_{i,j}$ which we are not yet in a position to define.  What we still need is a deeper analysis of $s_i \cdot W$ than found in Lemma \ref{crossingactioncor}.  The next lemma does this analysis.

\begin{lemma}\label{webtauinvtlemma}
Let $W \in \dw_{i,j}$. Then $s_j \cdot W = W + W' + O$ where $W'$ is a reduced web in $\dw_{j,i}$ and $O$ is a $\mathbb{Z}$-linear combination of reduced webs whose $\tau$-invariants each contain $s_i$ and $s_j$.  Figure \ref{figure: figure B} shows the web $s_j \cdot W$ and its reduction locally near the affected boundary vertices.
\end{lemma}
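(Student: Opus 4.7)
The plan is to use Lemma \ref{crossingactioncor} to get an initial decomposition and then analyze the $H$-smoothing summand locally near the boundary positions $i, i+1, j, j+1$. Since $s_j \notin \tau(W)$, Lemma \ref{crossingactioncor} gives $s_j \cdot W = W + \widetilde{W}'$, where $\widetilde{W}'$ is a possibly unreduced web arising from the second smoothing shown in Figure \ref{figure: figure A}, and Lemma \ref{persistence} guarantees that $s_j$ lies in the $\tau$-invariant of every reduced summand of $\widetilde{W}'$. It therefore suffices to reduce $\widetilde{W}'$ and verify that exactly one reduced summand lies in $\dw_{j,i}$ (with coefficient $1$), while all remaining summands have $s_i$ in their $\tau$-invariants.

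Take $j = i+1$ without loss of generality; the case $j = i-1$ follows by mirror symmetry. Since $s_i \in \tau(W)$, there is an internal sink $v$ in $W$ joining the boundary vertices $i$ and $i+1$, and its third edge goes to some source $v_*$. Because $s_j \notin \tau(W)$, the boundary vertex $i+2$ is attached to a distinct sink $u \neq v$. The $H$-smoothing of the $s_j$-crossing introduces two new trivalent vertices: a sink $B_1$ joining the boundary vertices $i+1$ and $i+2$, and an adjacent source $B_2$ joined to $B_1$, to a ``relabeled'' vertex $V'$ (playing the role of $v$, still attached to $i$ and to $v_*$), and to $u$. The source/sink alternation forced by the boundary orientations implies $v_* \neq u$, and a direct check shows that the only potential reducible face locally is the $4$-cycle $V' - v_* - u - B_2 - V'$, which actually becomes a square in $\widetilde{W}'$ exactly when $v_*$ and $u$ were joined by an edge in $W$.

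This leaves two cases. If $v_*$ and $u$ are not adjacent in $W$, then $\widetilde{W}'$ is already reduced: $B_1$ puts $s_j$ in the $\tau$-invariant, and $i$ attaches to $V'$ while $i+1$ attaches to $B_1 \neq V'$, so $s_i$ does not. Set $W' := \widetilde{W}'$ and $O = 0$. Otherwise, the square relation \eqref{squarerel} decomposes $\widetilde{W}'$ into exactly two reduced summands with coefficient $1$ each. The first resolution deletes the four square corners and directly reconnects $i$ to $v_*$'s third neighbor $v_{**}$ and $B_1$ to $u$'s third neighbor $u'$; boundary vertices $i$ and $i+1$ remain attached to distinct internal vertices, so $s_i \notin \tau$ and this summand is our $W'$. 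The second resolution directly connects $i$ to $B_1$ and $v_{**}$ to $u'$; in the resulting web $B_1$ is incident to all three boundary vertices $i, i+1, i+2$, putting both $s_i$ and $s_{i+1}$ in the $\tau$-invariant, so this summand goes into $O$ (possibly after further reduction if $v_{**}$ and $u'$ happen to be adjacent in $W$, but by Lemma \ref{persistence} every such further reduced summand will still contain $s_i$ and $s_{i+1}$).

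The main obstacle is the case analysis above: carefully arguing that no bigon is ever produced by the local modification of $\widetilde{W}'$ (which is where the source/sink bipartition is essential to force $v_* \neq u$), and that the square relation, when triggered, produces exactly the desired split into one summand with $s_i \notin \tau$ and one with $s_i \in \tau$, each with coefficient $1$. These local computations are precisely what Figure \ref{figure: figure B} records, completing the proof.
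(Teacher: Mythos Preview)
Your approach is essentially the paper's: take $j=i+1$ without loss of generality, apply the crossing relation to get $s_j\cdot W = W + \widetilde{W}'$, observe that the only possibly non-reduced face in $\widetilde{W}'$ is the face labeled $A$ in Figure~\ref{figure: figure A}, rule out a bigon there (the paper phrases this as ``a bigon would force a tripod on $i,i+1,i+2$ in $W$, hence $s_{i+1}\in\tau(W)$''; your exclusion via $u\neq v$ together with the source/sink bipartition is the same observation), and then branch on whether $A$ is a square.

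There is one real omission. In the square case you assert that Relation~\eqref{squarerel} yields ``exactly two reduced summands,'' yet you give no argument that the first resolution $W'$ is reduced, and you immediately walk back the claim for the second summand $O$. Since the lemma asserts that $W'$ is a \emph{reduced} web in $\dw_{j,i}$, this must be verified. The paper does it in one line: every bounded face of $W'$ is already a bounded face of $W$. Indeed, the resolution deletes $V',v_*,u,B_2$ and reconnects $i$--$v_{**}$ and $B_1$--$u'$; every face touched by this surgery either was already unbounded in $W$ or merges into the unbounded face between $i$ and $i+1$ in $W'$, so no new bounded face is created and reducedness is inherited from $W$. Your handling of $O$ via Lemma~\ref{persistence} is correct in spirit, though note that the new edge $v_{**}$--$u'$ can create a square and not only a bigon; Lemma~\ref{persistence} covers every case since the tripod at $B_1$ sits only on unbounded faces.
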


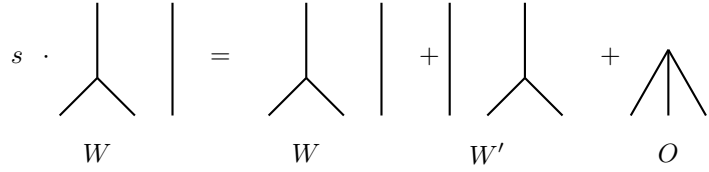
\begin{figure}[h]
\raisebox{20pt}{$s \hspace{.1in} \cdot$} \begin{tikzpicture} [baseline=0cm, scale=0.5]
\draw[style=thick] (-1,0)--(0,1);
\draw[style=thick](0,1)--(1,0);
\draw[style=thick](0,1)--(0,3);
\draw[style=thick] (2,0)--(2,3);
\node at (0,-1) {$W$};
\end{tikzpicture} \hspace{.1in} \raisebox{20pt}{$ = $} \hspace{.1in}
\begin{tikzpicture} [baseline=0cm, scale=0.5]
\draw[style=thick] (-1,0)--(0,1);
\draw[style=thick](0,1)--(1,0);
\draw[style=thick](0,1)--(0,3);
\draw[style=thick] (2,0)--(2,3);
\node at (0,-1) {$W$};
\end{tikzpicture} \hspace{.1in} \raisebox{20pt}{$+$}
\begin{tikzpicture} [baseline=0cm, scale=0.5]
\draw[style=thick] (0,0)--(1,1);
\draw[style=thick](1,1)--(2,0);
\draw[style=thick](1,1)--(1,3);
\draw[style=thick] (-1,0)--(-1,3);
\node at (0,-1) {$W'$};
\end{tikzpicture} \hspace{.1in} \raisebox{20pt}{$+$}
\begin{tikzpicture} [baseline=0cm, scale=0.5]
\draw[style=thick](-1,0)--(0,1.75);
\draw[style=thick](0,0)--(0,1.75);
\draw[style=thick](1,0)--(0,1.75);
\node at (0,-1) {$O$};
\end{tikzpicture}
\caption{Local schematic for reducing the web $s \cdot W$}\label{figure: figure B}
\end{figure}

\begin{proof}
Assume without loss of generality that $j=i+1$.  Compute $s_{i+1}\cdot W$ using the diagram for $s_{i+1}$  in Figure \ref{1.1} followed by the relation in Figure \ref{braidskeinrels} (with $q=-1$) to the new four-valent vertex in $s_{i+1} \cdot W$.  Figure \ref{figure: figure A} shows this calculation.  If the second web in Figure \ref{figure: figure A} is reduced then it is $W'$ by Lemma \ref{persistence}, and $O$ is a sum over the empty set.  

Suppose the second web is not reduced.  Then the problem is the face labeled $A$, since every other modified face is unbounded.  If the face $A$ were a bigon then $W$ has a tripod on vertices $i,i+1,i+2$ (namely a connected component like those in Figure \ref{counterexample2}).  Thus an internal vertex in $W$ joins  boundary vertices $i+1$ and $i+2$, which means $s_{i+1} \in \tau(W)$ by Lemma \ref{persistence}.  This contradicts the hypothesis on $W$.

This means the face $A$ must be a square.  The square relation in Equation \eqref{squarerel} produces two summands shown in Figure \ref{figure: figure B}.  Denote the second term in Figure \ref{figure: figure B} by $W'$.  It is reduced because every bounded face in $W'$ is also in $W$.  Denote the third term in Figure \ref{figure: figure B} by $O$.  It may not be reduced, but since $\tau(O)$ contains both $s_i$ and $s_{i+1}$ then so do all reduced webs obtained from $O$, by Lemma \ref{persistence}.  This proves the claim.
\end{proof}

The next corollary restates Figure \ref{figure: figure B} in the language of Yamanouchi words.

\begin{corollary}\label{Yamanouchi word corollary}
Let $W$ and $W'$ be as defined in Lemma \ref{webtauinvtlemma}. Then, the Yamanouchi words of $W$ and $W'$ agree except in positions $\{i,i+1,j,j+1\}$.
\end{corollary}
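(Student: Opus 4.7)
The plan is to exploit the strict locality of the reduction established in Lemma \ref{webtauinvtlemma}. Assume without loss of generality that $j=i+1$; the case $j=i-1$ is symmetric. From the proof of that lemma and Figure \ref{figure: figure B}, the passage from $W$ to $W'$ takes place inside a small planar disk $D$ containing only boundary vertices $i,i+1,j+1=i+2$. Outside $D$ the two webs coincide as planar graphs, and the same edges of $W$ and $W'$ meet $\partial D$.

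Recall that the letter $y_k$ of the Yamanouchi word is determined by comparing the depths of the two faces flanking the boundary edge at vertex $k$. For $k\notin\{i,i+1,j,j+1\}$, both flanking faces lie outside $D$, and since $W$ and $W'$ agree outside $D$, these faces correspond to the same planar regions in both webs. Hence it suffices to prove that every face $F$ outside $D$ has the same depth in $W$ as in $W'$.

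For the depth equality, we argue by rerouting minimum-length paths. The infinite face is exterior to $D$ in both webs. Any path from the infinite face to an exterior face $F$ that dips into $D$ can be replaced by a path of no greater length that stays outside $D$: enumerate how edges of $W$ (resp. $W'$) cross $\partial D$ and observe from Figure \ref{figure: figure B} that the depths assigned to the faces touching $\partial D$ from outside are the same in both webs. Since the exterior portions of $W$ and $W'$ are identical, any shortest path that stays outside $D$ is available in both webs, yielding the same depth for $F$. Consequently $y_k$ agrees for $W$ and $W'$ at every position $k\notin\{i,i+1,j,j+1\}$.

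The main obstacle is the rerouting step, i.e.\ checking that entering $D$ never produces a shortcut relative to going around it. This reduces to computing, from the two local pictures in Figure \ref{figure: figure B}, the depth of each face on $\partial D$ and verifying that these boundary depths match in $W$ and $W'$. Once the boundary depths agree, equality of depths throughout the exterior follows from the definition of depth as a minimum over paths in the shared exterior graph.
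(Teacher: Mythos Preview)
Your outline captures the right idea---the passage from $W$ to $W'$ is local to a disk $D$ around vertices $i,i+1,i+2$, so only the depths of faces meeting $D$ can possibly change---but the proof is incomplete by your own admission. Your final paragraph says the ``main obstacle'' is the rerouting step and that it ``reduces to computing \ldots\ and verifying that these boundary depths match,'' yet you never carry out that computation. As written, this is a plan rather than a proof.

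There is also a subtle circularity in your reduction. You want to conclude that exterior depths agree once the depths of the faces on $\partial D$ agree; but those boundary faces are themselves exterior faces whose depths could in principle be affected by shortcuts through $D$. So you cannot simply read their depths off the local picture without first arguing that no minimal path to them enters $D$. Concretely, the faces the paper calls $A$ and $D$ (flanking the disk on the left and right) must be shown to have unchanged depth, while the faces $B$ and $C$ between the three boundary vertices are allowed to change. In several of the configurations the depth of $B$ genuinely does change (e.g.\ Case~8a in the paper's later analysis), so the argument requires care about exactly which faces are protected.

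For comparison: the paper itself offers no independent proof of this corollary---it is stated as a restatement of Figure~\ref{figure: figure B}. The actual verification that the depths of $A$, $D$, and all faces beyond them are unchanged is performed case by case in the proof of Lemma~\ref{lemma: kupbij commutes with f}, where the authors tabulate the depths of $A,B,C,D$ (and $F,C'$ in the square case) for each of the eight possible Yamanouchi subwords and observe directly that $A$ and $D$ are fixed. To complete your argument you would need either to reproduce that case analysis here, or to give a uniform reason why the local modification in Figure~\ref{figure: figure B} can never shorten a path between two faces lying outside $D$.
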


As with $\fyt$, $\fsn$, and $\fkl$, we use this result to define $\fw_{i,j}$.  We  then define generalized $\tau$-invariants for webs following Definition \ref{gentaudef} and  Remark \ref{remark: general gentaudef}.

\begin{definition}
Define the function $\fw_{i,j}: \dw_{i,j} \rightarrow \dw_{j,i}$ by the rule that for each $W \in \dw_{i,j}$ the image $\fw_{i,j}(W)$ is the reduced web $W' \in \dw_{j,i}$ from Lemma \ref{webtauinvtlemma}.
\end{definition}

Figure \ref{examplewebs} shows many examples of webs and their images under $\fw$.

Lemma \ref{tausetlemma} showed that the $\tau$-invariant commutes with Khovanov--Kuperberg's bijection.  We now show that the map $\fw$ commutes with Khovanov--Kuperberg's bijection as well, in the following sense.  

\begin{lemma}\label{lemma: kupbij commutes with f}
Suppose that $W_T$ is the reduced web corresponding to the standard tableau $T$ under Khovanov--Kuperberg's bijection.  If $W_T \in \dw_{i,j}$ then $\fw_{i,j}(W_T) = W_{\fyt_{i,j}(T)}$.
\end{lemma}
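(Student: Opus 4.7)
The plan is to show that the tableau $T''$ corresponding to $\fw_{i,j}(W_T)$ via Khovanov--Kuperberg coincides with $T' := \fyt_{i,j}(T)$. Assume without loss of generality that $j = i+1$, since $j = i-1$ is symmetric. By Lemma \ref{tausetlemma} applied to both $W_T$ and $\fw_{i,j}(W_T)$, we have $T \in \dyt_{i,j}$ and $T'' \in \dyt_{j,i}$; also $T' \in \dyt_{j,i}$ by definition of $\fyt_{i,j}$. The goal reduces to showing $T' = T''$.

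First I would invoke Corollary \ref{Yamanouchi word corollary}, which tells us that the Yamanouchi words of $W_T$ and $\fw_{i,j}(W_T)$ agree outside positions $\{i, i+1, i+2\}$; hence so do those of $T$ and $T''$. Since $\fyt_{i,j}$ transposes only adjacent labels inside $\{i, i+1, i+2\}$, the Yamanouchi words of $T$ and $T'$ also agree outside these three positions. Consequently $T'$ and $T''$ match on all Yamanouchi letters outside $\{i, i+1, i+2\}$ and share a common multiset of letters at those positions.

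I would then case-analyze on this multiset. Seven three-element multisets over $\{+, 0, -\}$ are compatible with $T \in \dyt_{i,j}$; in six of them, the $\dyt_{j,i}$ condition combined with the Yamanouchi property uniquely determines the arrangement at positions $\{i, i+1, i+2\}$, so $T' = T''$ follows immediately after verifying that the forced arrangement coincides with $\fyt_{i,j}(T)$. For instance, the multiset $\{+, 0, 0\}$ forces $T = (+, 0, 0)$ and the unique viable arrangement for both $T'$ and $T''$ is $(0, +, 0)$; the five other unambiguous multisets are similar.

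The remaining and hardest case is the multiset $\{+, 0, -\}$, where $T$ can be $(+, -, 0)$ or $(0, -, +)$ and \emph{both} of the arrangements $(-, +, 0)$ and $(0, +, -)$ are a priori $\dyt_{j,i}$-feasible for $T''$ (depending on the prefix counts of $+, 0, -$ through position $i-1$). To settle this ambiguity, I would turn to the $M$-diagram of $T$ near positions $i, i+1, i+2$. In the sub-case $T = (+, -, 0)$, the middle vertex $i+2$ has its left arc attached to the top vertex $i$ while the bottom vertex $i+1$ has its right arc attached to some middle vertex $m < i$, and these arcs necessarily cross between positions $i$ and $i+1$. In the tableau $T' = (-, +, 0)$ obtained by the transposition $(i, i+1)$, the corresponding arcs are $(i+1, i+2)$ left and $(m, i)$ right, which no longer cross. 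Tracking this change through the $M$-diagram-to-web construction of Section \ref{bijectionsection} shows that $W_{T'}$ has exactly the local structure described in Figure \ref{figure: figure B} for $\fw_{i,j}(W_T)$; the sub-case $T = (0, -, +)$ is mirror-symmetric. This identifies $T'' = T'$ in the ambiguous multiset case and completes the proof.
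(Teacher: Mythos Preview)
Your reduction via Corollary~\ref{Yamanouchi word corollary} and the multiset count is a genuine simplification over the paper's proof. The paper carries out a uniform eight-case analysis on the Yamanouchi subword $y_iy_{i+1}y_{i+2}$ (Figure~\ref{Yamanouchicases}), drawing the local $M$-diagram in each case and recomputing face depths in the web $W'$ to read off its Yamanouchi word directly. Your observation that in six of the seven multisets the pair of constraints ``same letters outside $\{i,i+1,i+2\}$'' and ``$T''\in\dyt_{j,i}$'' already pins down $T''$ uniquely lets you bypass the depth computations in those cases entirely; this is cleaner and worth keeping.

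The gap is in the ambiguous multiset $\{+,0,-\}$. Your sentence ``tracking this change through the $M$-diagram-to-web construction shows that $W_{T'}$ has exactly the local structure described in Figure~\ref{figure: figure B}'' is where the actual web-level content of the lemma lives, and it is not yet a proof. Two things are missing. First, you have not verified that the $M$-diagrams of $T$ and $T'$ agree \emph{globally} outside the two arcs you name: one must check that replacing $(i,i{+}2),(m,i{+}1)$ by $(i{+}1,i{+}2),(m,i)$ leaves all other left and right arcs, and all other crossings, unchanged (this is true but requires the argument that the sets of available top/middle vertices seen during the $M$-construction are identical at every stage). Second, in the sub-case $T=(+,-,0)$ the web $W'$ is obtained from the second smoothing in Figure~\ref{figure: figure A} \emph{after a square resolution} (this is the paper's Case~4, which falls in the ``square'' list); you need to actually perform that resolution and match the result against $W_{T'}$. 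Finally, the sub-case $T=(0,-,+)$ is not literally mirror-symmetric to $T=(+,-,0)$: here it is $M_{T'}$ that acquires the extra crossing rather than $M_T$, and there is no square in $s_j\cdot W_T$ (paper's Case~6). The verification still goes through, but by a different local computation, so ``mirror-symmetric'' should be replaced by an explicit check. The paper sidesteps all of this by computing depths in $W'$ directly rather than comparing to $W_{T'}$; that approach is more uniform but forces it through all eight cases.
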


\begin{proof}
We first confirm that $\fyt_{i,j}$ is well-defined on $T$.  Indeed $\tau(W_T) = \tau(T)$ by Lemma \ref{tausetlemma}, so $W_T \in \dw_{i,j}$ if and only if $T \in \dyt_{i,j}$. 
We will show that either $T' = s_i \cdot T$ or $T' = s_j \cdot T$.  The result then follows from the construction of $\fyt$ in Lemma \ref{fytlemma}.

Corollary \ref{Yamanouchi word corollary} showed that $W_T$ and $W'$ have the same Yamanouchi words except in positions $\{i,i+1,j,j+1\}$.  (This is the content of Figure \ref{figure: figure B} when $j=i+1$.) 

Our argument uses Yamanouchi subwords, like the proof of Lemma \ref{tausetlemma}. Figure \ref{Yamanouchicases} lists the eight options for the subword $y_iy_{i+1}y_{i+2}$ in the Yamanouchi word for $T$. In each case, the rows containing $i, i+1, i+2$  locally determine the $M$-diagram for $T$.  (In some cases, there are two possibilities.) Figure \ref{Yamanouchicases} also shows the local $M$-diagrams.  We labeled the number of arcs above each region in the $M$-diagrams in Figure \ref{Yamanouchicases}; Tymoczko showed that this is the depth of the corresponding region in the resolved web \cite[Lemma 4.5]{T}.  

\begin{figure}[ht]
\begin{tabular}{|r|c|c|c|c|}
\hline&&&\multicolumn{2}{|c|}{}\\
& {\bf $T$} &{\bf $\fyt_{i,j}(T)$} & \multicolumn{2}{|c|}{Possible $M$-Diagram(s) for $T$}\\
&&&\multicolumn{2}{|c|}{}\\
\hline &&&&\\

  {\bf Case 1} & $+0+$& $++0$ &
  \begin{picture}(100,20)(0,0) 
  \put(5,-10){\scalebox{1.25}{\includegraphics[width=1in]{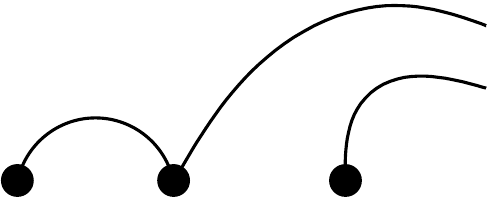}}} 
  \put(13,-5) {\tiny{$d+1$}} 
  \put(30,10) {\Small{$d$}} 
  \put(45,-5) {\tiny{$d+1$}} 
  \put(75,-5) {\tiny{$d+2$}} 
  \end{picture} & 
  \begin{picture}(110,20)(0,0) 
  \put(5,-10){\scalebox{1.25}{\includegraphics[width=1in]{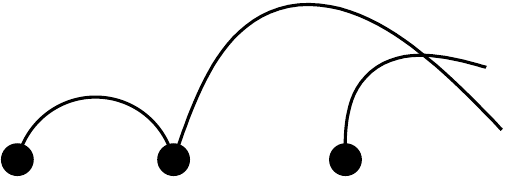}}}
   \put(13,-10) {\tiny{$d+1$}} 
   \put(20, -3) {\color{red}{\tiny{$B$}}}
     \put(30,10) {\Small{$d$}} 
  \put(45,-10) {\tiny{$d+1$}} 
    \put(50, 0) {\color{red}{\tiny{$C$}}}
  \put(75,-10) {\tiny{$d+2$}} 
  \put(95,0){\tiny{$d+1$}}
    \put(95, 5) {\color{red}{\tiny{$F$}}}
  \end{picture} \\
  &&&&\\
\hline  &&& \multicolumn{2}{|c|}{}\\
{\bf Case 2} & $+00$& $0+0$ & \multicolumn{2}{|c|}{ 
\begin{picture}(150,30)(0,0)
\put(30,-10){\scalebox{1.15}{\includegraphics[width=1in]{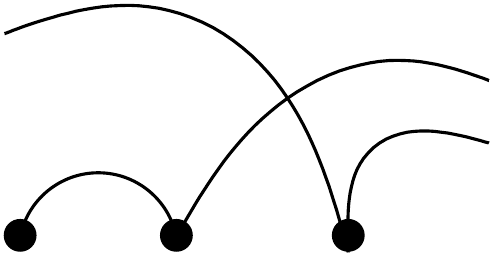}}}
\put(38,-10){\tiny{$d+2$}}
  \put(42, -3) {\color{red}{\tiny{$B$}}}
\put(65,-10){\tiny{$d+2$}}
  \put(70, 0) {\color{red}{\tiny{$C$}}}
\put(95,-10){\tiny{$d+2$}}
  \put(105, 14) {\color{red}{\tiny{$F$}}}
\put(47,10){\tiny{$d+1$}}
\put(85,14){\tiny{$d+1$}}
\put(75,27){\tiny{$d$}}
\end{picture}} \\
  &&& \multicolumn{2}{|c|}{}\\
  \hline  &&& \multicolumn{2}{|c|}{}\\
 {\bf Case 3} & $+-+$& $++-$ &  \multicolumn{2}{|c|}{ 
 \begin{picture}(150,30)(0,0)
\put(30,-10){\scalebox{1.15}{\includegraphics[width=1in]{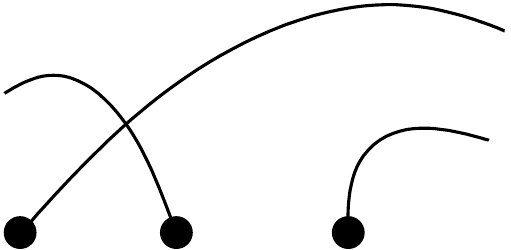}}}
\put(38,-7){\tiny{$d+2$}}
\put(65,7){\tiny{$d+1$}}
\put(95,-7){\tiny{$d+2$}}
\put(25,7){\tiny{$d+1$}}
\put(65,27){\tiny{$d$}}
\end{picture}} 
 \\
&&& \multicolumn{2}{|c|}{}\\
  \hline  &&& \multicolumn{2}{|c|}{}\\
  {\bf Case 4} & $+-0$& $-+0$ & \multicolumn{2}{|c|}{ 
  
  \begin{picture}(150,30)(0,0)
\put(30,-10){\scalebox{1.15}{\includegraphics[width=1in]{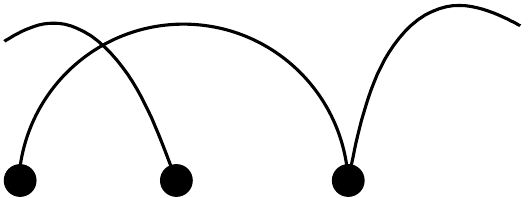}}}
\put(38,-7){\tiny{$d+2$}}
\put(60,-3){\tiny{$d+1$}}
\put(95,0){\tiny{$d+1$}}
\put(15,6){\tiny{$d+1$}}
\put(65,27){\tiny{$d$}}
  \put(50, 25) {\color{red}{\tiny{$F$}}}
    \put(42, 0) {\color{red}{\tiny{$B$}}}
      \put(65, 5) {\color{red}{\tiny{$C$}}}
\end{picture}} \\
&&& \multicolumn{2}{|c|}{}\\
  \hline  &&& \multicolumn{2}{|c|}{}\\
 {\bf Case 5} & $+--$ & $-+-$ &\multicolumn{2}{|c|}{
 
   \begin{picture}(150,30)(0,0)
\put(30,-10){\scalebox{1.15}{\includegraphics[width=1in]{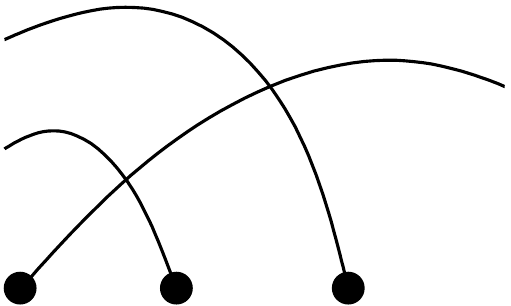}}}
\put(38,-7){\tiny{$d+3$}}
\put(60,-3){\tiny{$d+2$}}
\put(90,7){\tiny{$d+1$}}
\put(15,6){\tiny{$d+2$}}
\put(45,22){\tiny{$d+1$}}
\put(75,35) {\tiny{$d$}}
  \put(40, 30) {\color{red}{\tiny{$F$}}}
    \put(44, 0) {\color{red}{\tiny{$B$}}}
      \put(62, 5) {\color{red}{\tiny{$C$}}}
\end{picture}} \\
&&& \multicolumn{2}{|c|}{}\\
  \hline  &&& \multicolumn{2}{|c|}{}\\
 {\bf Case 6} & $0-+$ & $0+-$ &\multicolumn{2}{|c|}{ 
 \begin{picture}(150,20)(0,0)
\put(25,-10){\scalebox{1.25}{\includegraphics[width=1.2in]{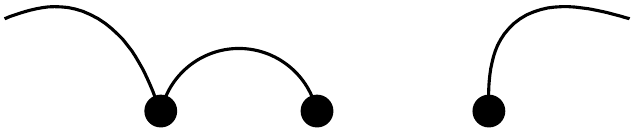}}}
\put(20,-3){\tiny{$d+1$}}
\put(58,-7){\tiny{$d+1$}}
\put(115,-3){\tiny{$d+1$}}
\put(50,8){\Small{$d$}}
\end{picture}} \\
&&& \multicolumn{2}{|c|}{}\\
  \hline  &&& \multicolumn{2}{|c|}{}\\
   {\bf Case 7} & $0-0$ & $00-$ &\multicolumn{2}{|c|}{
    \begin{picture}(150,30)(0,0)
\put(25,-10){\scalebox{1.25}{\includegraphics[width=1.2in]{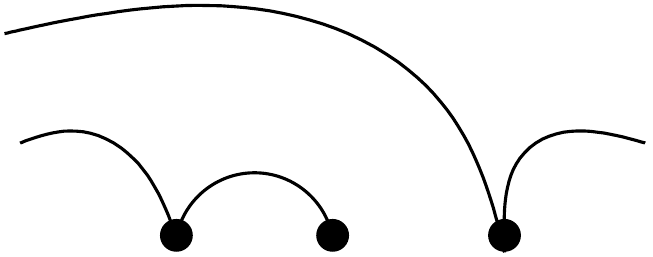}}}
\put(27,-3){\tiny{$d+2$}}
\put(58,-7){\tiny{$d+2$}}
\put(115,-3){\tiny{$d+1$}}
\put(52,10){\Small{$d+1$}}
\put(110,20) {\Small{$d$}}
\end{picture}} \\
&&& \multicolumn{2}{|c|}{}\\
  \hline  &&&&\\
{\bf Case 8} & $0--$ & $-0-$ &
 \begin{picture}(110,30)(0,0) 
  \put(5,-10){\scalebox{1.25}{\includegraphics[width=1in]{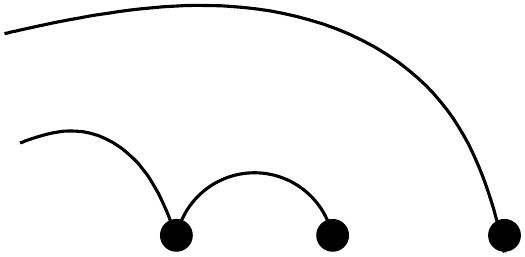}}} 
  \put(13,-5) {\tiny{$d+2$}} 
  \put(40,10) {\Small{$d+1$}} 
  \put(40,-7) {\tiny{$d+2$}} 
  \put(90,12) {\Small{$d$}} 
  \end{picture} & 
  \begin{picture}(110,20)(-10,0) 
  \put(5,-10){\scalebox{1.25}{\includegraphics[width=1in]{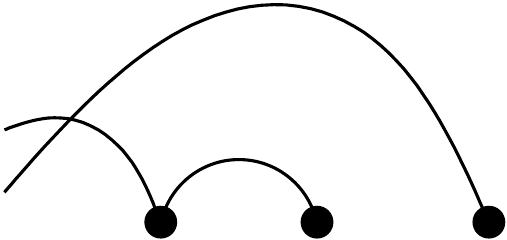}}}
   \put(11,-7) {\tiny{$d+2$}} 
     \put(32,10) {\Small{$d+1$}} 
      \put(65, 8) {\color{red}{\tiny{$C$}}}
  \put(40,-9) {\tiny{$d+2$}} 
    \put(47, -3) {\color{red}{\tiny{$B$}}}
  \put(10,20){\Small{$d$}}
    \put(4, 4) {\color{red}{\tiny{$F$}}}
    \put(-12,-1){\tiny{$d+1$}}
  \end{picture}\\
&&&&\\
\hline \end{tabular}
\caption{Possible Yamanouchi subwords for $T$ and $\fyt_{i,j}(T)$}\label{Yamanouchicases}
\end{figure}

In each case, call the resolved $M$-diagram $W$ and consider $s_i \cdot W$ as in Figure \ref{figure: figure A}.  Either the second web in Figure \ref{figure: figure A} is reduced already, or it contains a square.  The latter occurs when $W$ has exactly three internal vertices between $i+1$ and $i+2$, which happens in cases 1b, 2, 4, 5, and 8b.  The former happens in all other cases: the face containing $i+1$ and $i+2$ has either two first arcs or two second arcs as sides, and only first arcs can cross second arcs. Figure \ref{newfig} illustrates these cases.  After resolving, the face with $i+1$ and $i+2$ has at least four vertices if it is bounded away from $i+1$. 

When there is no square, only faces $B$ and $D$ in Figure \ref{newfig}(a) have new neighbors, so only faces $B$ and $D$ can change depth.  This happens exactly when the depths of $B$ and $D$ differ by two, in which case the larger depth drops by one.  The depth of the new  face $C'$ is $\min \{\textup{depth($B$), depth($D$)}\} + 1$.  We obtain:

\begin{center}\begin{tabular}{l|c|c}
Case & Depth of $A$, $B$, $C$, $D$ & Depth of $A$, $B$, $C'$, $D$ \\
\cline{1-3} 1a & d, d+1, d+1, d+2 & d, d+1, d+2, d+2 \\
3 & d+1, d+2, d+1, d+2 & d+1, d+2, d+3, d+2 \\
6 & d+1, d+1, d, d+1 & d+1, d+1, d+2, d+1 \\
7 & d+2, d+2, d+1, d+1 & d+2, d+2, d+2, d+1 \\
8a & d+2, d+2, d+1, d & d+2, d+1, d+1, d
\end{tabular}\end{center}

Similarly, Figure \ref{newfig}(b) labels the relevant faces in $W$ and $W'$ when there is a square.  (Faces $B$, $C$, and $F$ are shown on the $M$-diagrams in Figure \ref{Yamanouchicases} as well.)  Faces $A$ and $D$ have the same depth in $W$ as in $W'$, since no minimal-length path from $A$ or $D$ goes through $B$ and any path through $C$ can go through $F$ instead.  The face marked $F$ in $W'$ has the same depth as the face marked $F$ in $W$, since no minimal-length path from $F$ goes through $B$ or $C$ and since depth($F$) is at most depth($C$).  Again, the depth of the new  face $C'$ is $\min \{\textup{depth($F$), depth($D$)}\} + 1$. We obtain the following:

\begin{center}\begin{tabular}{l|c|c}
Case & Depth of $A$, $B$, $C$, $D$ & Depth of $A$, $F$, $C'$, $D$ \\
\cline{1-3} 1b & d, d+1, d+1, d+2 & d, d+1, d+2, d+2 \\
2 & d+1, d+2, d+2, d+2 & d+1, d+1, d+2, d+2 \\
4 & d+1, d+2, d+1, d+1 & d+1, d, d+1, d+1 \\
5 & d+2, d+3, d+2, d+1 & d+2, d+1, d+2, d+1 \\
8b & d+2, d+2, d+1, d & d+2, d+1, d+1, d
\end{tabular}
\end{center}

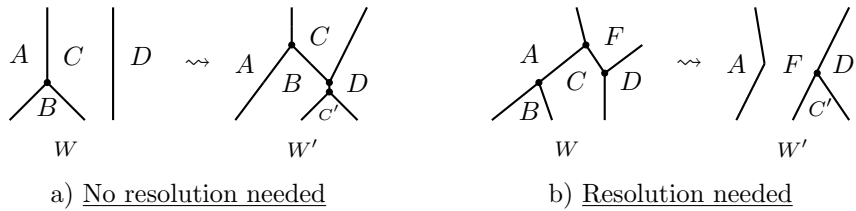
\begin{figure}[h]
\begin{tikzpicture} [baseline=0cm, scale=0.5]
\node at (-.75,1.75) {$A$};
\node at (0,.35) {$B$};
\node at (.7,1.7) {$C$};
\node at (2.5, 1.7) {$D$};
\draw[style=thick] (-1,0)--(0,1);
\draw[radius=.08, fill=black](0,1)circle;
\draw[style=thick](0,1)--(1,0);
\draw[style=thick](0,1)--(0,3);
\draw[style=thick] (1.75,0)--(1.75,3);
\node at (4,1.5){$\leadsto$}; 
\draw[style=thick] (5,0)--(5.75,1);
\draw[style=thick](5.75,1)--(6.5,2);
\draw[radius=.08, fill=black](6.5,2)circle;
\draw[radius=.08, fill=black](7.5,1)circle;
\draw[radius=.08, fill=black](7.5,.75)circle;
\draw[style=thick](6.5,3)--(6.5,2.5);
\draw[style=thick](6.5,2.5)--(6.5,2);
\draw[style=thick](7.5,1)--(7,1.5);
\draw[style=thick](7,1.5)--(6.5,2);
\draw[style=thick](7.5,1)--(7.5,.75);
\draw[style=thick](6.75,0)--(7.5,.75);
\draw[style=thick](8.25,0)--(7.5,.75);
\draw[style=thick](7.5,1)--(8,2);
\draw[style=thick](8,2)--(8.5,3);
\node at (5.25, 1.5) {$A$};
\node at (6.5,1) {$B$};
\node at (7.25,2.25) {$C$};
\node at (7.5, 0.2) {\tiny{$C'$}};
\node at (8.25, 1) {$D$};
\node at (.5, -.75) {\Small{$W$}};
\node at (6.8, -.75) {\Small{$W'$}};
\node at (3.75, -2) {a) \underline{No resolution needed}};
\end{tikzpicture}
\hspace{.5in}
\begin{tikzpicture} [baseline=0cm, scale=0.5]
\draw[style=thick](0,0)--(2.5,2);
\draw[style=thick](2.5,2)--(2.25,3);
\draw[radius=.08, fill=black](2.5,2)circle;
\draw[radius=.08, fill=black](1.25,1)circle;
\draw[style=thick](1.25,1)--(1.6,0);
\draw[style=thick](2.5,2)--(3,1.25);
\draw[radius=.08, fill=black](3,1.25)circle;
\draw[style=thick](3,1.25)--(3,0);
\draw[style=thick](3,1.25)--(4,2);
\node at (1, 1.75) {$A$};
\node at (1, 0.25) {$B$};
\node at (2.25, 1.05) {$C$};
\node at (3.7, 1.05) {$D$};
\node at (3.25,2.25) {$F$};
\node at (5.25,1.5) {$\leadsto$};
\draw[style=thick] (6.5,0)--(7.25,1.5);
\draw[style=thick](7.25,1.5)--(7,3);
\draw[style=thick](8,0)--(9.5,3);
\draw[radius=.08, fill=black](8.65,1.25)circle;
\draw[style=thick](8.65,1.25)--(9.5,0);
\node at (6.5,1.5) {$A$};
\node at (8,1.5) {$F$};
\node at (8.75,.35) {\Small{$C'$}};
\node at (9.35,1.5) {$D$};
\node at (2, -.75) {\Small{$W$}};
\node at (8, -.75) {\Small{$W'$}};
\node at (4.75, -2) {b) \underline{Resolution needed}};
\end{tikzpicture}
\caption{Applying a simple transposition to $W$} \label{newfig}
\end{figure}

Figure \ref{Yamanouchicases} gives the Yamanouchi words for $\fyt_{i,j}(T)$ in each of these cases.  In all cases $T' = s_i \cdot T$ or $T' = s_j \cdot T'$ which proves the claim.
\end{proof}

As in previous sections, we will now show that the generalized $\tau$-invariant we constructed for webs is completely consistent with the generalized $\tau$-invariants defined for Young tableaux.  More precisely, our final and main theorem shows that Khovanov--Kuperberg's bijection commutes with the generalized $\tau$-invariant, in the sense that $\tau_g(T)=\tau_g(W_T)$.  This means that Khovanov--Kuperberg's bijection is  a natural analogue of the Robinson--Schensted correspondence for webs.  
Thus tableaux, permutations, \kl cells, and webs are all naturally related in a way that preserves deep structures associated to them---though they do not correspond to the same bases for representations of $S_n$.

\begin{theorem}[The Robinson--Schensted Correspondence for Webs]\label{maintheorem}
Khovanov--Kuperberg's bijection carries a reduced web $W$ on $3n$ source vertices to the unique $[n,n,n]$ standard tableau $T$ satisfying $\tau_g(T)=\tau_g(W)$.
\end{theorem}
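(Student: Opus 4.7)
The plan is to combine the two structural facts established just above (Lemmas \ref{tausetlemma} and \ref{lemma: kupbij commutes with f}) with Vogan's uniqueness theorem for tableaux (Theorem \ref{gentautabthm}). Concretely, I would prove the equality $\tau_g(T) = \tau_g(W_T)$ first, and then invoke Theorem \ref{gentautabthm} to upgrade this to the uniqueness statement in the theorem.

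First I would verify the base case, namely that $\tau(T) = \tau(W_T)$, which is exactly Lemma \ref{tausetlemma}. This gives $T \underset{0}{\approx} W_T$ in the sense of Definition \ref{gentaudef}, adapted to webs as permitted by Remark \ref{remark: general gentaudef}. Next, I would proceed by induction on $n$ in the equivalence $\underset{n}{\approx}$: assuming $T \underset{n-1}{\approx} W_T$, and given any pair of adjacent simple transpositions $s_i, s_j$ for which both $T$ and $W_T$ lie in their respective sets $\dyt_{i,j}$ and $\dw_{i,j}$ (these conditions are equivalent by Lemma \ref{tausetlemma}), Lemma \ref{lemma: kupbij commutes with f} tells us that $\fw_{i,j}(W_T) = W_{\fyt_{i,j}(T)}$. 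Applying the inductive hypothesis to the pair $(\fyt_{i,j}(T), W_{\fyt_{i,j}(T)})$ yields $\fyt_{i,j}(T) \underset{n-1}{\approx} \fw_{i,j}(W_T)$. By Definition \ref{gentaudef} this means $T \underset{n}{\approx} W_T$, completing the induction and establishing $\tau_g(T) = \tau_g(W_T)$.

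Once this equality is in hand, the uniqueness part is immediate: if $T'$ is any standard $[n,n,n]$ tableau with $\tau_g(T') = \tau_g(W)$, and we set $T = T'$ is ruled out only if we can show there is no other candidate, then we argue as follows. Let $W = W_T$ be the web corresponding to $T$ under Khovanov--Kuperberg's bijection; the previous paragraph gives $\tau_g(T) = \tau_g(W)$. If $T'$ is another standard tableau on $3n$ boxes with $\tau_g(T') = \tau_g(W)$, then $\tau_g(T') = \tau_g(T)$, and Theorem \ref{gentautabthm} forces $T' = T$. Thus $T$ is the unique standard tableau whose generalized $\tau$-invariant matches that of $W$.

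No step here is difficult once the preceding lemmas are available; the entire proof is a short formal consequence. The only subtlety worth double-checking is that the inductive argument really uses the symmetric compatibility expressed in Lemma \ref{lemma: kupbij commutes with f}, including that $T \in \dyt_{i,j}$ if and only if $W_T \in \dw_{i,j}$, so that the conditions ``both are in $\dyt_{i,j}$'' and ``both are in $\dw_{i,j}$'' determining when the $f$-maps get applied in Definition \ref{gentaudef} line up exactly across the bijection. This compatibility is precisely what Lemma \ref{tausetlemma} provides at each stage of the induction, so there is no obstruction.
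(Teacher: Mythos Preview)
Your proposal is correct and follows essentially the same approach as the paper: both combine Lemma \ref{tausetlemma} (base case), Lemma \ref{lemma: kupbij commutes with f} (inductive step), and Theorem \ref{gentautabthm} (uniqueness). The paper packages the first two ingredients into a commutative diagram and then says ``using the definition of generalized $\tau$-invariants and the appropriate commutative diagram,'' whereas you spell out the induction on the order of equivalence explicitly; the logical content is identical. One small remark: your inductive hypothesis should be stated as holding for \emph{all} pairs $(T', W_{T'})$ simultaneously at level $n-1$, not just for the fixed pair $(T, W_T)$, since you apply it to $(\fyt_{i,j}(T), W_{\fyt_{i,j}(T)})$; your phrasing ``applying the inductive hypothesis to the pair'' shows you understand this, but it should be made explicit in the statement of the hypothesis.
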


\begin{proof}Let $\varphi: W_{3n} \rightarrow \{[n,n,n] \textup{ standard tableaux}\}$ denote Khovanov--Kuperberg's bijection.  Consider the following diagram:
\[\begin{picture}(230, 85)(-20,-15)
\put(7,3){\vector(1,0){83}}
\put(-30,-3){\small $[n,n,n]$}
\put(-50,-13){\small standard tableaux}
\put(95,-3){\small $[n,n,n]$}
\put(80,-13){\small standard tableaux}
\put(45,10){$\fyt$}

\put(-15,50){$W_{3n}$}
\put(100,50){$W_{3n}$}
\put(3,53){\vector(1,0){90}}
\put(45,60){$\fw$}

\put(-10,43){\vector(0,-1){32}}
\put(-20,25){$\varphi$}
\put(105,43){\vector(0,-1){32}}
\put(108,25){$\varphi$}

\put(114,48){\vector(2,-1){40}}
\put(135,40){$\tau$}
\put(120,5){\vector(2,1){34}}
\put(135,5){$\tau$}
\put(157,22){subsets of $S_n$}
\end{picture}\]
Lemma \ref{tausetlemma} says that the maps in the triangle commute.  Lemma \ref{lemma: kupbij commutes with f} says that the maps in the square commute.  Using the definition of generalized $\tau$-invariants and the appropriate commutative diagram, we conclude that $\tau_g(T) = \tau_g(W)$ for each web $W \in W_{3n}$ and tableau $T = \varphi(W)$.  Theorem \ref{gentautabthm} says that the generalized $\tau$-invariant determines $T$ uniquely, completing the proof. 
\end{proof}

\section{Examples}\label{examples}
This section contains examples of webs and their associated tableaux, together with their $\tau$-invariants and images under $f_{i,j}$. We collect this information to demonstrate ``by hand" that Khovanov--Kuperberg's bijection associates each web to the unique tableau with the same generalized $\tau$-invariant.  At the same time, the reader will observe that several webs or tableaux can share the same $\tau$-invariant.  Figure \ref{examplewebs} shows all the webs, while Figures \ref{exampletableaux1} and \ref{exampletableaux2} show the tableaux.

\subsection{The unique web with a given generalized \texorpdfstring{$\tau$}{tau}-invariant} Consider the tableau $T_1$. After considering all $9$-vertex webs, we find exactly two reduced webs whose $\tau$-invariants agree with that of $T_1$, namely $W_1$ and $W_2$. Now compute the $\tau$-invariants of $\fyt_{1,2}(T_1)$, $\fw_{1,2}(W_1)$ and $\fw_{1,2}(W_2)$. This shows that $W_2$ and $T_1$ have different generalized $\tau$-invariants, so $T_1$ corresponds to $W_1$.  As a final check, Khovanov--Kuperberg's bijection associates $T_1$ with $W_1$.

\subsection{The unique tableau with a given generalized \texorpdfstring{$\tau$}{tau}-invariant}  Now consider the web  $W_3$. After considering all $[4,4,4]$ standard  tableaux, exactly four have the same $\tau$-invariant as $W_3$, namely $T_2$, $T_3$, $T_4$ and $T_5$. Compute the $\tau$-invariant of $\fw_{7,6}(W_3)$ and $\fyt_{7,6}$ of each tableau. At this stage, only $T_2$ and $T_4$ could have the same generalized $\tau$-invariant as $W_3$. We then calculate the $\tau$-invariant of  $\fw_{1,2}(\fw_{7,6}(W_3))$, $\fyt_{1,2}(\fyt_{7,6}(T_2))$ and $\fyt_{1,2}(\fyt_{7,6}(T_4))$. This confirms that $T_4$ and $W_3$ have different generalized $\tau$-invariants, so $W_3$  corresponds to $T_2$.  As before, this is consistent with Khovanov--Kuperberg's bijection.

\begin{figure}[ht]
\begin{tabular}{|c|c|c|}
\hline&&\\ 
& {Web} &{\large $\tau$} \\
&&\\
\hline &&\\

 {\large $W_1$} &\raisebox{-15pt}{\includegraphics[width=1in]{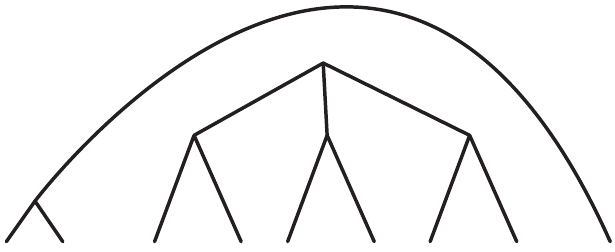}}& $s_1,s_3,s_5,s_7$ \\
  &&\\
\hline
&&\\

 {\large $\fw_{1,2}(W_1)$} &\raisebox{-15pt}{\includegraphics[width=1in]{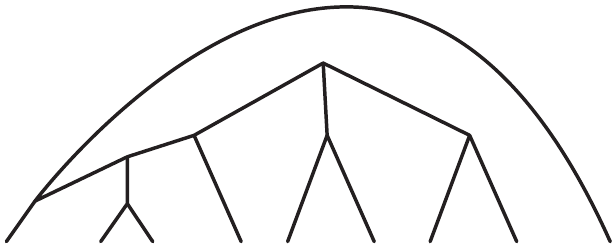}}& $s_2,s_5,s_7$ \\
  &&\\
\hline
&&\\
	
 {\large $W_2$} &\raisebox{-15pt}{\includegraphics[width=1in]{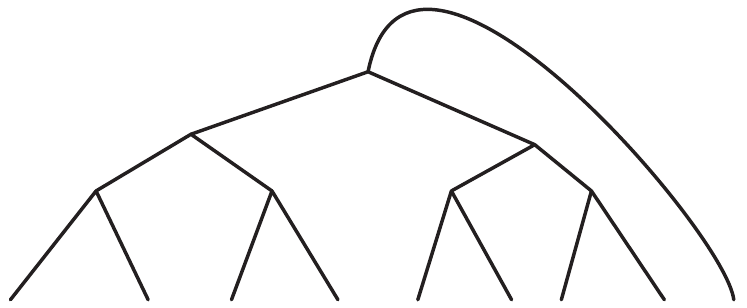}}& $s_1,s_3,s_5,s_7$ \\
  &&\\
\hline 
&&\\

 {\large $\fw_{1,2}(W_2)$} &\raisebox{-15pt}{\includegraphics[width=1in]{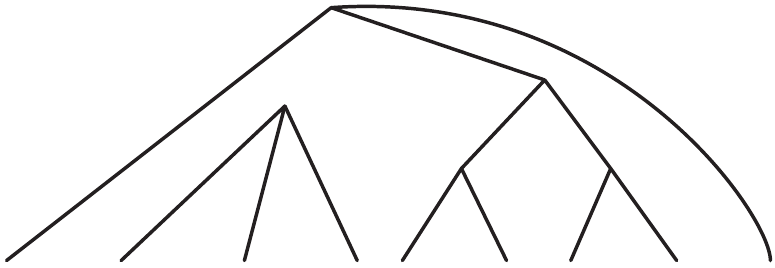}}& $s_2,s_3,s_5,s_7$ \\
  &&\\
\hline

&&\\

 {\large $W_3$} &\raisebox{-15pt}{\includegraphics[width=1in]{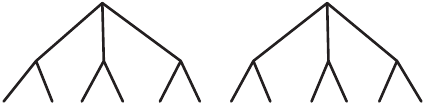}}& $s_1,s_3,s_5,s_7,s_9,s_{11}$ \\
  &&\\
\hline

&&\\

 {\large $\fw_{7,6}(W_3)$} &\raisebox{-15pt}{\includegraphics[width=1in]{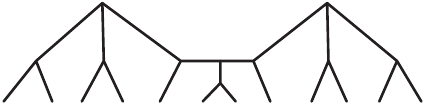}}& $s_1,s_3,s_6,s_9,s_{11}$ \\
  &&\\
\hline

&&\\

 {\large $\fw_{1,2}(\fw_{7,6}(W_3))$} &\raisebox{-15pt}{\includegraphics[width=1in]{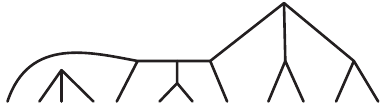}}& $s_2,s_3,s_6,s_9,s_{11}$ \\
  &&\\
\hline
 \end{tabular}
\caption{Webs for examples}
\label{examplewebs}
\end{figure}

\begin{figure}[ht]
\begin{tabular}{|c|c|c|}
\hline&&\\ 
& {Tableau} &{\large $\tau$} \\
&&\\
\hline &&\\

 {\large $T_1$} &\raisebox{-15pt}{\young(135,247,689)}& $s_1,s_3,s_5,s_7$ \\
  &&\\
\hline 

&&\\
 {\large $\fyt_{1,2}(T_1)$} &\raisebox{-15pt}{\young(125,347,689)}& $s_2,s_5,s_7$ \\
  &&\\
\hline
&&\\
 {\large $T_2$} &\raisebox{-15pt}{\young(1379,258\eleven,46\ten\twelve)}&$s_1,s_3,s_5,s_7,s_9,s_{11}$  \\
  &&\\
\hline
&&\\
 {\large $T_3$} &\raisebox{-15pt}{\young(1357,269\eleven,48\ten\twelve)}&  $s_1,s_3,s_5,s_7,s_9,s_{11}$\\
  &&\\
\hline
&&\\
 {\large $T_4$} &\raisebox{-15pt}{\young(1357,249\eleven,68\ten\twelve)}&  $s_1,s_3,s_5,s_7,s_9,s_{11}$\\
  &&\\
\hline
&&\\
 {\large $T_5$} &\raisebox{-15pt}{\young(1359,247\eleven,68\ten\twelve)}&  $s_1,s_3,s_5,s_7,s_9,s_{11}$\\
  &&\\
\hline
\end{tabular}
\caption{Tableaux for examples, part I.}
\label{exampletableaux1}
\end{figure}

\begin{figure}[ht]
\begin{tabular}{|c|c|c|}
\hline&&\\ 
& {Tableau} &{\large $\tau$} \\
&&\\
\hline &&\\
&&\\
 {\large $\fyt_{7,6}(T_2)$} &\raisebox{-15pt}{\young(1369,258\eleven,47\ten\twelve)}&$s_1,s_3,s_6,s_9,s_{11}$  \\
  &&\\
  \hline
  &&\\
 {\large $\fyt_{7,6}(T_3)$} &\raisebox{-15pt}{\young(1358,269\eleven,47\ten\twelve)}&  $s_1,s_3,s_5,s_6,s_8,s_9,s_{11}$\\
  &&\\
\hline
&&\\
 {\large $\fyt_{7,6}(T_4)$} &\raisebox{-15pt}{\young(1356,249\eleven,78\ten\twelve)}&  $s_1,s_3,s_6,s_9,s_{11}$\\
  &&\\
\hline
&&\\
 {\large $\fyt_{7,6}(T_5)$} &\raisebox{-15pt}{\young(1359,246\eleven,78\ten\twelve)}&  $s_1,s_3,s_5,s_6,s_9,s_{11}$\\
  &&\\
\hline
&&\\
 {\large $\fyt_{1,2}(\fyt_{7,6}(T_2))$} &\raisebox{-15pt}{\young(1269,358\eleven,47\ten\twelve)}&$s_2,s_3,s_6,s_9,s_{11}$  \\
  &&\\
  \hline
 &&\\
 {\large $\fyt_{1,2}(\fyt_{7,6}(T_4))$} &\raisebox{-15pt}{\young(1256,349\eleven,78\ten\twelve)}&  $s_2,s_6,s_9,s_{11}$\\
  &&\\
\hline
\end{tabular}
\caption{Tableaux for examples, part II.}
\label{exampletableaux2}
\end{figure}

\section{\texorpdfstring{$\la{sl}_3$}{sl3}-webs and \kl Theory}\label{webandkl}

The braid group maps naturally to the Hecke algebra, allowing us to relate $\la{sl}_3$-webs to the \kl representations  described earlier.  In this section, we discuss a presentation  $\dot{\mathscr{H}}_{3n}$ of the Hecke algebra that is  compatible with $\la{sl}_3$-webs, together with the map from the braid group to $\dot{\mathscr{H}}_{3n}$.  We also prove that---despite sharing the same generalized $\tau$-invariants, as per Section \ref{bijectionandtau}---the \kl left cell basis is not the same as the basis of reduced webs in this case.  Our proof uses a counterexample similar to Khovanov and Kuperberg's proof that web bases are not dual canonical \cite{KK}.  To the best of our knowledge, this is the only formal proof of a result that some expected to hold by Schur-Weyl duality.

We first describe a presentation $\dot{\mathscr{H}}_{3n}$ of the Hecke algebra that fits better with Khovanov's conventions for $\la{sl}_3$-webs \cite{MK}, including using the variable $q$ rather than $v$. Let $R=\bC[q,q\inv]$ and denote the group algebra of the braid group over $R$ by $RB_{3n}$.  Denote the generators of $\dot{\mathscr{H}}_{3n}$ by $\dot{T}_{s_i}$. Replace the Hecke algebra relation shown in Equation \ref{heckerelation} with 
\begin{equation}\label{newheckerelation}
(\dT{i}-q^2)(\dT{i}+q^4)=0.
\end{equation}
The map $RB_{3n} \rightarrow \dot{\mathscr{H}}_{3n}$ sends the positive crossing in Equation \ref{poscrosseq} to $\dT{i}$. Equivalently, we obtain $\dot{\mathscr{H}}_{3n}$ from $RB_{3n}$ by imposing the skein relation in 
 Figure \ref{braidgroupquotientrelation}. 
 \begin{figure}[h]

\begin{tikzpicture}[baseline=.25cm, scale=.25]
\node at (-11, 2) {$q^3$};
\draw[style= thick, ->] (-8,0)--(-10,4);
\draw[style=thick,] (-10,0) -- (-9.25, 1.5);
\draw[style=thick, ->] (-8.75, 2.5) -- (-8,4);
\node at (-5,2) {$-q^{-3}$};
\draw[style= thick, ->] (-3,0)--(-1,4);
\draw[style=thick,] (-1,0) -- (-1.75, 1.5);
\draw[style=thick, ->] (-2.25, 2.5) -- (-3,4);
\node at (4.5, 2) {$= (q - q^{-1})$};
\draw [style=thick, ->] (9,0) to[out=45, in=270] (10,2);
\draw [style=thick] (10,2) to[out=90, in=110] (9,3.75);
\draw [style=thick, ->] (12,0) to[out=135, in=270] (11,2);
\draw [style=thick] (11,2) to[out=90, in=70] (12,3.75);

\end{tikzpicture}

\caption{The relation carrying the group algebra $RB_{3n}$ to $\dot{\mathscr{H}}_{3n}$.}
\label{braidgroupquotientrelation}
\end{figure}
 
The algebra $\dot{\mathscr{H}}_{3n}$ is in fact isomorphic to $\mathscr{H}_{3n}$. We sketch the isomorphism here; Bigelow gives more details \cite{MR2264547}. Let $\ol{A}=\bC[v^{1/2},v^{-1/2}]$ be the complexification of the ring $A$ from Section \ref{klsection}. Define an isomorphism $\ol{A} \rightarrow R$ by taking $v^{1/2}$ to $-1/q$. The isomorphism $\mathscr{H}_{3n} \rightarrow \dot{\mathscr{H}}_{3n}$ is then defined by taking $\T{i}$ to $(1/q^4)\dT{i}$. Figure \ref{Tidiagram} shows the action of $\T{i}$ on webs in diagrammatic form. (The astute reader may notice a parallel with Equation \ref{klaction}.  This parallel is reinforced in  Lemma \ref{taurulelemma}, where we identify webs on which $\T{i}$ acts by $-1$.)
\begin{figure}[ht]

\begin{tikzpicture}[baseline=.25cm, scale=.25]
\node at (-2,2) {$T_{s_i} \cdot$};
\node at (0, -1) {\tiny{$i$}};
\node at (2, -1) {\tiny{$i+1$}};
\draw[style= thick, ->] (0,0)--(0,4);
\draw[style=thick, ->] (2,0) -- (2,4);
\node at (5, 2) {$= v$};
\draw [style=thick, ->] (7,0) to[out=45, in=270] (8,2);
\draw [style=thick] (8,2) to[out=90, in=110] (7,3.75);
\draw [style=thick, ->] (10,0) to[out=135, in=270] (9,2);
\draw [style=thick] (9,2) to[out=90, in=70] (10,3.75);
\node at (12.5, 2.25) {$+v^{1/2}$};
\draw [style = thick, ->] (15,0) -- (15.5, .5);
\draw [style = thick] (15.5,.5) -- (16, 1);
\draw [style = thick, ->] (17,0) -- (16.5, .5);
\draw [style = thick] (16.5,.5) -- (16, 1);
\draw[style=thick,](16, 1) -- (16, 2);
\draw[style=thick, <-](16, 2) -- (16, 3);
\draw [style = thick, ->] (15,4) -- (15.5, 3.5);
\draw [style = thick] (15.5,3.5) -- (16, 3);
\draw [style = thick, ->] (17,4) -- (16.5, 3.5);
\draw [style = thick] (16.5,3.5) -- (16, 3);

\end{tikzpicture}

\caption{The diagrammatic action of $\T{i}$ on webs.}\label{Tidiagram}
\end{figure}

Suppose that $Q$ is a standard tableau of shape $[n,n,n]$ and let $\CC_Q$ be the corresponding \kl left cell, namely the set $\CC_Q=\{w\in S_{3n}\vert Q(w)=Q\}$. Theorem \ref{RSleftcell} stated that $\operatorname{KL}_{\CC_Q}$ is isomorphic as a complex symmetric group representation to the $[n,n,n]$-representation of $S_{3n}$. Results of Murphy show that $W_{3n}$ and $\operatorname{KL}_{\CC_Q}$ are isomorphic as $\mathscr{H}_{3n}$-modules over the complex numbers \cite{MR1327362}. 

A natural conjecture is that the web basis and \kl basis are equivalent.   In fact, these bases differ whenever $n$ is greater than $5$, as we show in the next theorem via a counterexample. 

\begin{theorem}\label{thm:Inequivalence of bases}
The web basis for $W_{3n}$ differs from the Kazhdan-Lusztig basis when $n \geq 6$.
\end{theorem}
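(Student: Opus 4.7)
The plan is to prove the theorem by exhibiting an explicit counterexample for $n=6$, analogous in spirit to Khovanov--Kuperberg's demonstration that the reduced web basis differs from the dual canonical basis.  Combining Theorem~\ref{RSleftcell} with Khovanov--Kuperberg's bijection yields a canonical set-bijection $\varphi\colon W_{3n}\to B(\klc)$ for any fixed left cell $\CC$ parameterized by a $[n,n,n]$-tableau; equivalence of the bases amounts to the assertion that the linear extension $\Phi$ of $\varphi$ is $S_{3n}$-equivariant.  To refute this, it suffices to find a single simple reflection $s_k$ and a single reduced web $W\in W_{18}$ for which the expansion of $s_k\cdot W$ in the reduced web basis does not match, coefficient-wise, the expansion of $T_{s_k}C_{\varphi(W)}$ (at $v=1$) in the \kl basis.

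The first step is to pin down the two sets of structure constants.  On the web side, if $W\in \dw_{i,j}$ then Lemma~\ref{webtauinvtlemma} gives $s_j\cdot W = W + \fw_{i,j}(W) + O$, where $O$ is a $\mathbb Z$-linear combination of reduced webs each of whose $\tau$-invariant contains both $s_i$ and $s_j$.  The integer coefficients of the basis webs appearing in the reduction of $O$ are determined by the square and bigon relations \eqref{bigonrel}--\eqref{squarerel} at $q=-1$; in particular, a given basis web $W''$ can pick up coefficient strictly greater than $1$ when multiple applications of the square relation during reduction all funnel into the same planar diagram.  On the \kl side, Equation~\eqref{klcellaction} at $v=1$ gives $s_j C_w = C_w + \sum_{s_j\in\tau(y)} \mu(w,y)\,C_y$.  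Theorem~\ref{klexchange} already pins down the unique summand with $s_i\notin\tau(y)$ as $\fkl_{i,j}(C_w)$ with coefficient $1$, consistent with the $\fw_{i,j}(W)$ term on the web side, so any discrepancy must occur among the $y$ with $\{s_i,s_j\}\subseteq\tau(y)$.

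The heart of the proof is then to produce an explicit triple $(W,s_j,W'')$ with $W''$ a reduced web appearing with coefficient $\ge 2$ in the reduction of $O$, such that the corresponding permutation $y\in \CC$ (determined by $\Phi(W'') = C_y$) satisfies $\mu(w,y)=1$.  Equivariance of $\Phi$ would force these two integers to coincide, giving a contradiction.  A natural candidate is a web modeled on Khovanov--Kuperberg's tetrahedral configuration~\cite{KK}, chosen so that $W$ contains two disjoint squares both of whose ``collapsing'' resolutions yield the same basis web $W''$, doubling its coefficient on the web side; the web $W''$ is then translated through $\varphi$ into a specific $y\in \CC$, and one controls $\mu(w,y)$ by keeping $w$ and $y$ close in the Bruhat order so that the KL polynomial $P_{y,w}$ is forced (e.g.\ by the degree bound together with low-length considerations, or by fully commutative / boolean structure in $S_{18}$) to have leading coefficient $1$.

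The principal obstacle is the KL multiplicity computation: $\mu(w,y)$ for arbitrary $w,y\in S_{18}$ is intractable in general, so the counterexample must be engineered to sit in a regime where $\mu(w,y)$ can be computed or bounded by hand.  The plan is therefore to pick the smallest possible $W$ supporting a ``doubled'' square reduction---which is what forces $n\ge 6$, since for smaller boundary size every reduced web in the reduction of $O$ arises through a unique sequence of relations and thus with coefficient exactly $1$---and then to arrange $w$ and $y$ so that $\ell(w)-\ell(y)$ is small enough that the degree bound $\deg P_{y,w}\le (\ell(w)-\ell(y)-1)/2$ collapses, forcing $\mu(w,y)\in\{0,1\}$ and producing the desired strict inequality with the web coefficient.
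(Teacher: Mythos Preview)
Your proposal has the right overall shape---locate a structure constant for the $S_{3n}$-action on the web basis that cannot arise on the \kl side---but it misses the simplification that makes the argument go through without any hard \kl computation. You aim for a reduced web $W''$ appearing in $s_j\cdot W$ with coefficient at least $2$, and then propose to pin down $\mu(w,y)=1$ for the corresponding permutations. As you yourself flag, that $\mu$-computation in $S_{18}$ is the principal obstacle; your plan to force it by arranging small Bruhat distance is neither carried out nor clearly feasible for the configuration you describe, so what you have is a sketch rather than a proof.

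The paper bypasses this entirely by exploiting the \emph{sign}. At $q=-1$ the bigon relation \eqref{bigonrel} reads $[2]_{-1}=-2$, and the paper exhibits a specific reduced web $W$ on $18$ vertices such that applying $s_1$ and then a single chain of square reductions terminates in a bigon, producing a reduced web $W'$ that occurs in $s_1\cdot W$ with coefficient $-2$. On the \kl side, Equation~\eqref{klcellaction} shows that every off-diagonal structure constant is a $\mu$-value, and the positivity of \kl polynomials for $S_n$ (a consequence of \cite{MR560412,MR632980,MR610137}) gives $\mu(w,y)\ge 0$ for all $w,y$. A negative coefficient therefore cannot occur on the \kl side regardless of which $y$ corresponds to $W'$, so no individual $\mu$ ever needs to be computed. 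Your proposed mechanism---two square resolutions funneling into the same diagram---would, even if realized, yield a \emph{positive} coefficient and thus discard exactly the leverage (a single negative entry versus global nonnegativity) that makes the paper's argument immediate. The extension to $n>6$ is then just padding $W$ with extra tripod components.
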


\begin{proof}
Figures \ref{counterexample1} and \ref{counterexample2} give two webs, drawn so the base vertices are on a circle. As usual, set $q=-1$ in the Hecke algebra.  

Consider the multiplicity of $W'$ in the reduced-web decomposition of $s_1\cdot W$.  We  show that a unique sequence of reductions leads to $W'$.  Use Figure \ref{braidingmorphisms} to write $s_1 \cdot W$ as the sum of the reduced web $W$ and an unreduced web with a unique square.  Next, apply the square relation in Equation \eqref{squarerel} repeatedly; at each step, one term reduces to a web with fewer connected components than $W'$ and  a second term  peels off a new connected component of $W'$.  The resulting web looks like $W'$ except one connected component has a bigon.  The bigon relation in Equation \eqref{bigonrel} shows that $W'$ appears in $s_1\cdot W$ with multiplicity $-2$. 

This is impossible in the \kl basis since all coefficients $\mu$ in the \kl graph for $S_n$ are nonnegative  \cite{MR560412,MR632980,MR610137}. 

When $n>6$, we use the same argument on the web consisting of the union of $W$ with any desired number of copies of the connected components of $W'$.
\end{proof}

\begin{figure}[ht]
\begin{picture}(100,80)(0,12)
\put(0,50){
$W=
\raisebox{-.6in}{\includegraphics[height=1.2in]{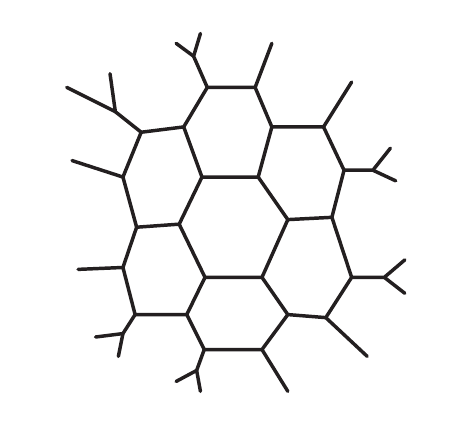}}$}
\put(35,73){1}
\put(35,58){2}
\put(37,35){3}
\end{picture}
\caption{Example showing that the web basis differs from the \kl basis, part I.}\label{counterexample1}
\end{figure}
\begin{figure}[ht]\begin{picture}(100,72)(0,8)
\put(0,50){
$W'=
\raisebox{-.5in}{\includegraphics[height=.9in]{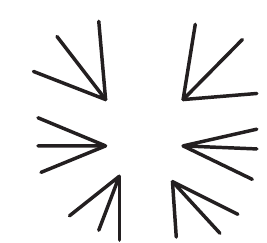}}$}
\put(40,70){1}
\put(33,58){2}
\put(34,45){3}
\end{picture}
\caption{Example showing that the web basis differs from the \kl basis, part II.}\label{counterexample2}
\end{figure}

\section{Acknowledgements}

We are grateful to Monty McGovern, John Stembridge and Peter Trapa for helpful discussions.

\def\cprime{$'$}

\end{document}